\documentclass{pspum-l}

\usepackage{amssymb}


\usepackage{url}
\usepackage{tikz-cd}

\newcommand{\R}{\mathbb{R}}
\newcommand{\C}{\mathbb{C}}
\newcommand{\Z}{\mathbb{Z}}
\newcommand{\Q}{\mathbb{Q}}
\newcommand{\Qellbar}{\overline{\mathbb{Q}_{\ell}}}
\newcommand{\Qell}{\mathbb{Q}_{\ell}}
\newcommand{\W}{\mathrm{W}}
\newcommand{\WD}{\mathrm{WD}}
\newcommand{\I}{\mathrm{I}}
\newcommand{\Frob}{\mathrm{Frob}}
\newcommand{\GL}{\mathrm{GL}}
\newcommand{\SL}{\mathrm{SL}}

\newcommand{\rs}{\mathrm{rs}}
\newcommand{\id}{\mathrm{id}}
\newcommand{\tr}{\operatorname{tr}}
\newcommand{\ab}{\mathrm{ab}}
\newcommand{\ad}{\mathrm{ad}}
\newcommand{\der}{\mathrm{der}}
\newcommand{\cont}{\mathrm{cont}}
\newcommand{\alg}{\mathrm{alg}}
\newcommand{\bas}{\mathrm{bas}}
\newcommand{\Isoc}{\mathrm{Isoc}}

\newcommand{\inv}{\operatorname{inv}}
\newcommand{\res}{\operatorname{res}}
\newcommand{\brd}{\operatorname{brd}}
\newcommand{\Ad}{\operatorname{Ad}}
\newcommand{\Lie}{\operatorname{Lie}}
\newcommand{\Hom}{\operatorname{Hom}}
\newcommand{\End}{\operatorname{End}}
\newcommand{\Aut}{\operatorname{Aut}}
\newcommand{\Out}{\operatorname{Out}}
\newcommand{\Isom}{\operatorname{Isom}}
\newcommand{\Irr}{\operatorname{Irr}}
\newcommand{\Res}{\operatorname{Res}}
\newcommand{\Gal}{\operatorname{Gal}}
\newcommand{\Cent}{\operatorname{Cent}}
\newcommand{\Rep}{\operatorname{Rep}}
\newcommand{\Fib}{\operatorname{Fib}}

\newcommand{\Spec}{\operatorname{Spec}}
\newcommand{\LL}{\operatorname{LL}}
\newcommand{\LLss}{\operatorname{LL}^\mathrm{ss}}
\newcommand{\onto}{\twoheadrightarrow}
\newcommand{\Ghat}{\widehat{G}}
\newcommand{\Hhat}{\widehat{H}}
\newcommand{\That}{\widehat{T}}

\newcommand{\Mhat}{\widehat{M}}

\newcommand{\Sbar}{\overline{S}}
\newcommand{\E}{\mathcal{E}}
\newcommand{\Epur}{\mathcal{E}^\mathrm{pur}}
\newcommand{\Eiso}{\mathcal{E}^\mathrm{iso}}
\newcommand{\Erig}{\mathcal{E}^\mathrm{rig}}
\newcommand{\ol}[1]{\overline{#1}}
\newcommand{\ul}[1]{\underline{#1}}

\newcommand{\etalchar}[1]{$^{#1}$}

\newtheorem{theorem}{Theorem}[section]
\newtheorem{lemma}[theorem]{Lemma}
\newtheorem{proposition}[theorem]{Proposition}
\newtheorem{conjecture}[theorem]{Conjecture}

\theoremstyle{definition}
\newtheorem{definition}[theorem]{Definition}

\theoremstyle{remark}
\newtheorem{remark}[theorem]{Remark}

\numberwithin{equation}{section}

\begin{document}

\title{The local Langlands conjecture}


\author{Olivier Taïbi}
\address{ENS de Lyon site Monod \\ UMPA UMR 5669 CNRS \\ 46, allée d’Italie \\ 69364 Lyon Cedex 07 \\ France }
\email{olivier.taibi@ens-lyon.fr}
\thanks{The author was employed by CNRS and supported in part by ANR project COLOSS: ANR-19-CE40-0015}

\subjclass[2020]{Primary 22E50 11S37; Secondary 11F80 11S25}

\date{May 9, 2025}

\begin{abstract}
  We formulate the local Langlands conjecture for connected reductive groups over local fields, including the internal parametrization of L-packets using endoscopy.
\end{abstract}

\maketitle

\section{Introduction}

The first goal of these notes is to state the local Langlands conjecture for connected reductive groups \(G\) over a local field \(F\), that is the existence of a map \(\LL\) with finite fibers associating Langlands parameters to irreducible smooth representations (\((\mathfrak{g}, K)\)-modules in the case where \(F\) is Archimedean) over an algebraically closed field \(C\) of characteristic zero (the field of complex numbers \(\C\) in the Archimedean case).
To be useful this map should satisfy certain properties, and we list some of them in Conjecture \ref{conj:crude_LLC}, after recalling in Sections \ref{sec:rep} and \ref{sec:langlands_param} parallel features of the classification of representations of \(G(F)\) and Langlands parameters.
For representations of \(G(F)\) we put the emphasis on the case of complex coefficients (\(C = \C\)), sometimes using notions relying on the topology of \(\C\), because of the relatively simple (partial) classification of representations using parabolically induced representations of essentially discrete representations of Levi subgroups.
We do point out however that the map \(\LL\) should be ``algebraic'' (in particular, functorial in \(C\)) and formulate the (purely algebraic) semi-simplified Langlands correspondence (Conjecture \ref{conj:crude_LLC_ss}).
Unfortunately neither conjecture includes a characterization of the map \(\LL\), and proofs of cases of these conjectures use different characterizations.

We then formulate refined versions of the local Langlands correspondence, describing the fibers (``\(L\)-packets'') of the maps \(\LL\) using centralizers of Langlands parameters.
In the case where \(G\) is quasi-split this is fairly straightforward (Conjectures \ref{conj:refined_LLC_qs} and \ref{conj:endo_char_rel_qs}) and includes Shahidi's conjecture (Conjecture \ref{conj:shahidi}).
Formulating the refined correspondence in the non-quasi-split case (Conjectures \ref{conj:refined_LLC_inner} and \ref{conj:endo_char_rel_inner}) is surprisingly difficult in general, and was only relatively recently achieved by Kaletha, generalizing an idea of Vogan from the case of pure inner forms of quasi-split groups.
This entails employing \emph{Galois gerbes} instead of Galois groups, thus generalizing Galois cohomology sets.
In this setting where explicit computations seem to be unavoidable it is favorable to work with a down-to-earth definition of such gerbes as extensions of absolute Galois groups by a group of multiplicative type.

We conclude in Section \ref{sec:gerbe_tann} with a short explanation of the relation between this definition of gerbes with the more conceptual one.
This is motivated by the fact that the same Galois gerbe, corresponding to the Tannakian category of isocrytals, appears in the study of Shimura varieties, in the internal structure of certain \(L\)-packets, and geometrically in the construction by Fargues and Scholze \cite{FarguesScholze} of a semi-simplified local Langlands correspondence.

We are grateful to Naoki Imai, David Schwein, Alex Youcis and an anonymous referee for comments on an earlier version of these notes.

\section{Notations}

Let \(F\) be a local field.
We denote by \(||\cdot||\) the normalized absolute value on \(F\).
In the non-Archimedean case it maps a uniformizer to \(q^{-1}\) where \(q\) is the cardinality of the residue field.
If \(F \simeq \R\) it is the usual absolute value, if \(F \simeq \C\) it is given by \(z \mapsto z \ol{z}\).
We will denote by \(\ol{F}\) a separable closure of \(F\) and \(\Gamma = \Gal(\ol{F}/F)\).
For a group scheme \(u\) of multiplicative type of finite type\footnote{We adopt the convention of \cite{SGA3_II}: group schemes of multiplicative types are not assumed to be of finite type.} over \(F\) we denote \(X^*(u) = \Hom(u_{\ol{F}}, \mathbb{G}_{m,\ol{F}})\), a finitely generated \(\Z\)-module with smooth action of \(\Gamma\).
For a torus \(T\) over \(F\) we also have \(X_*(T) = \Hom(\mathbb{G}_{m,\ol{F}}, T_{\ol{F}}) = \Hom(X^*(T),\Z)\).

\section{Representations of reductive groups} \label{sec:rep}

In this section we focus on the case where \(F\) is a non-Archimedean local field and occasionally indicate the differences for the Archimedean case.

\subsection{Setup}

Let \(G\) be a connected reductive group over \(F\).
We refer to \cite{Borel_lag} \cite{Springer_lag} \cite{BorelTits_gpesred} and \cite{SGA3_III} for fundamental results about reductive groups.
Let \(C\) be an algebraically closed field of characteristic zero, for example \(\C\) or \(\Qellbar\).
We consider \emph{smooth} representations of \(G(F)\) with coefficients in \(C\), i.e.\ pairs \((V,\pi)\) where \(V\) is a vector space over \(C\) and \(\pi: G(F) \to \GL(V)\) is a morphism of groups such the map
\begin{align*}
  G \times V & \longrightarrow{} V \\
  (g,v) & \longmapsto{} \pi(g) v
\end{align*}
is continuous for the natural topology on \(G\) and the discrete topology on \(V\).
If \(\pi\) is implicit we will also denote \(g \cdot v\) for \(\pi(g) v\).
Recall that such a representation is called \emph{admissible} if for any compact open subgroup \(K\) of \(G(F)\) the subspace
\[ V^K = \left\{ v \in V \,\middle|\, \forall k \in K,\, \pi(k)v=v \right\} \]
of \(V\) has finite dimension.
It is a non-trivial but well-known fact that any irreducible representation is admissible.
Denote by \(Z(G)\) the center of \(G\).
By a suitable generalization of Schur's lemma, any irreducible representation has a central character \(Z(G)(F) \to C^\times\).
For a smooth representation \((V,\pi)\) of \(G(F)\) its contragredient \((\tilde{V}, \tilde{\pi})\) is the space of \(K\)-finite linear forms on \(V\).

\begin{remark}
  In the case of an Archimedean field \(F\) we only consider coefficients \(C=\C\).
  The analogue of smooth representations are \((\mathfrak{g},K)\)-modules where \(\mathfrak{g} = \C \otimes_{\R} \Lie G(F)\) and \(K\) is a maximal compact subgroup of \(G(F)\).
  For many notions it is necessary to relate \((\mathfrak{g},K)\)-modules to continuous representations of \(G(F)\) on topological vector spaces.
  See e.g.\ \cite[\S 3.4]{Wallach_realred1} for the relation between the two notions in the case of unitary irreducible representations.
\end{remark}

\subsection{Parabolic induction, the Jacquet functor and supercuspidal representations}

Let \(P\) be a parabolic subgroup of \(G\).
Let \(N\) be the unipotent radical of \(P\) and \(M = P/N\) its reductive quotient.
Recall that there exists a section \(M \to P\), unique up to conjugation by \(N(F)\).
Let
\[ \delta_P(p) = ||\det ( \Ad(p) | \Lie(N))|| : M(F) \longrightarrow{} q^\Z\]
be the modulus character for the action of \(M(F)\) on \(N(F)\).
We choose a square root \(\sqrt{q}\) of \(q\) in \(C\), allowing us to define \(\delta_P^{1/2}\).
If \(C=\C\) we naturally choose \(\sqrt{q} \in \R_{>0}\).

Let \((V,\sigma)\) be a smooth representation of \(M(F)\), which we can see as a representation of \(P(F)\) trivial on \(N(F)\).
The normalized parabolically induced representation \(i_P^G \sigma\) is the space of locally constant functions \(f : G(F) \rightarrow V\) such that for any \(p \in P(F)\) and \(g \in G(F)\) we have \(f(p g) = \delta_P(p)^{1/2} \sigma(p) f(g)\), with left action of \(G(F)\) by \((g \cdot f)(x) = f(xg)\).
If \(\sigma\) is admissible (resp.\ has finite length) then \(i_P^G \sigma\) is admissible (resp.\ has finite length).
The introduction of \(\delta_P^{1/2}\) in the definition is motivated by the fact that if \(C=\C\) and \((V,\sigma)\) is unitary, i.e.\ endowed with a \(M(F)\)-invariant Hermitian inner product, then \(i_P^G \sigma\) has a natural \(G(F)\)-invariant Hermitian inner product.
In particular if \(\sigma\) is admissible and unitarizable then \(i_P^G \sigma\) is semi-simple.

For \((\pi,V)\) a smooth representation of \(G(F)\), denote by \(V_N\) the space of co-invariants for the action of \(N(F)\), which is naturally a smooth representation \(\pi_N\) of \(M(F)\).
The normalized Jacquet functor applied to \((\pi,V)\) is the smooth representation \(r_P^G \pi = \delta_P^{-1/2} \otimes \pi_N\) of \(M(F)\) on the space \(V_N\).
It also preserves admissibility and the property of being of finite length.

Recall that an irreducible smooth representation \((V,\pi)\) of \(G(F)\) is called supercuspidal if \(V_N = 0\) for any parabolic \(P = MN \subsetneq G\).
This is equivalent to all ``matrix coefficients''
\begin{align*}
  G(F) & \longrightarrow{} C \\
  g & \longmapsto \langle \pi(g) v,\tilde{v} \rangle
\end{align*}
for \(v \in V\) and \(\tilde{v} \in \tilde{V}\), being compactly supported modulo center.
Note that if \(\omega_\pi: Z(G(F)) \to C^\times\) is the central character of \(\pi\) then matrix coefficients of \(\pi\) are \(\omega_\pi\)-equivariant.

We recall in the following theorem the notion of supercuspidal support.

\begin{theorem} \label{thm:cusp_supp}
  Let \(\pi\) be an irreducible representation of \(G(F)\).
  \begin{enumerate}
  \item There exists a parabolic subgroup \(P = MN\) of \(G\) and a supercuspidal irreducible representation \(\sigma\) of \(M(F)\) such that \(\pi\) embeds in \(i_P^G \sigma\).
  \item If \(P' = M'N'\) is a parabolic subgroup of \(G\) and \(\sigma'\) is a supercuspidal irreducible representation of \(M'(F)\) then \(\pi\) is isomorphic to a subquotient of \(i_{P'}^G \sigma'\) if and only if there exists an element of \(G(F)\) conjugating \((M, \sigma)\) and \((M', \sigma')\).
  \end{enumerate}
\end{theorem}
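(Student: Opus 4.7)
The plan is to establish (1) by induction on the semisimple rank of \(G\), making essential use of Frobenius reciprocity and the transitivity of parabolic induction, and to establish (2) by combining the geometric lemma of Bernstein--Zelevinsky with the defining property of supercuspidal representations.

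For (1), I argue by induction on the semisimple rank of \(G\), the base case being that of a torus (or more generally an anisotropic modulo center group), for which every irreducible representation is trivially supercuspidal. If \(\pi\) is itself supercuspidal, take \(P = G\) and \(\sigma = \pi\). Otherwise by definition there exists a proper parabolic subgroup \(P = MN\) of \(G\) with \(V_N \neq 0\), i.e.\ \(r_P^G \pi \neq 0\). Since \(r_P^G \pi\) is admissible and of finite length, it admits an irreducible quotient \(\tau\), and Frobenius reciprocity
\[ \Hom_M(r_P^G \pi, \tau) \simeq \Hom_G(\pi, i_P^G \tau) \]
produces a nonzero \(G(F)\)-equivariant morphism \(\pi \to i_P^G \tau\), which is injective since \(\pi\) is irreducible. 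Because \(M\) has strictly smaller semisimple rank than \(G\) (the root system of \(M\) is a proper subsystem of that of \(G\)), the induction hypothesis applied to \(\tau\) on \(M\) yields a parabolic \(Q = LU\) of \(M\) and an irreducible supercuspidal representation \(\sigma\) of \(L(F)\) with \(\tau \hookrightarrow i_Q^M \sigma\). Letting \(P' \subseteq G\) be the parabolic with Levi \(L\) obtained as the preimage of \(Q\) under \(P \to M\), so that \(i_{P'}^G \sigma \simeq i_P^G(i_Q^M \sigma)\) by transitivity of normalized parabolic induction, the exactness of \(i_P^G\) yields the desired embedding \(\pi \hookrightarrow i_{P'}^G \sigma\).

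For (2), let \((P, \sigma)\) be a pair produced by (1). Frobenius reciprocity yields a surjection \(r_P^G \pi \onto \sigma\), and if \(\pi\) is a subquotient of \(i_{P'}^G \sigma'\) then exactness of \(r_P^G\) shows that \(\sigma\) is a subquotient of \(r_P^G(i_{P'}^G \sigma')\). I then invoke the Bernstein--Zelevinsky geometric lemma, which provides a finite filtration of \(r_P^G i_{P'}^G \sigma'\) whose graded pieces are indexed by minimal-length representatives \(w\) of the double cosets \(W_M \backslash W / W_{M'}\), the \(w\)-th piece having the form \(i_{P_w}^M(w \cdot r_{P'_w}^{M'} \sigma')\), where \(P_w = M \cap w P' w^{-1}\) and \(P'_w = w^{-1} P w \cap M'\) are parabolics of \(M\) and \(M'\) respectively. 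The supercuspidality of \(\sigma'\) forces \(r_{P'_w}^{M'} \sigma' = 0\) unless \(P'_w = M'\), equivalently \(w M' w^{-1} \subseteq M\); the supercuspidality of \(\sigma\), combined with the fact that \(\sigma\) appears as an irreducible subquotient of one of the surviving pieces \(i_{P_w}^M(w \cdot \sigma')\), further forces \(P_w = M\), hence \(w M' w^{-1} = M\) and \(\sigma \simeq w \cdot \sigma'\), giving the required \(G(F)\)-conjugacy of \((M, \sigma)\) and \((M', \sigma')\).

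The main obstacle is the last implication in (2): the fact that a supercuspidal irreducible representation cannot be a subquotient of a proper parabolic induction is not immediate from the definition via vanishing of Jacquet modules, since Jacquet functors only respect short exact sequences, not arbitrary subquotients. One typically dispatches this either through Casselman's characterization of supercuspidal irreducibles as those whose matrix coefficients are compactly supported modulo center (a property incompatible with proper parabolic induction), or by invoking Bernstein's second adjointness \(\Hom_G(i_{P_w}^M \tau', \sigma) \simeq \Hom_M(\tau', r_{\ol{P_w}}^M \sigma)\) to contradict the vanishing of \(r_{\ol{P_w}}^M \sigma\) when \(P_w \subsetneq M\). Once this point is settled, the remainder of the argument is bookkeeping on the geometric lemma.
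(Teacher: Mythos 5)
The paper itself gives no proof of this theorem, only references (Casselman's and Silberger's books for the ``if'' direction attributed to Harish-Chandra, and Bernstein center theory or Bernstein--Zelevinsky for the ``only if'' direction). Your proposal is therefore doing more work than the paper and is broadly in the spirit of the cited sources: part (1) is exactly Jacquet's induction argument, and your ``only if'' argument via the geometric lemma is essentially what the \cite{BernsteinZelevinsky_ind1} reference does. That said, there are two genuine gaps.

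First, you prove only the ``only if'' half of (2). The ``if'' direction --- that \(\pi\) \emph{is} a subquotient of \(i_{P'}^G\sigma'\) whenever \((M,\sigma)\) and \((M',\sigma')\) are \(G(F)\)-conjugate --- is not a formality. Conjugating by a suitable \(g\in G(F)\) only yields an isomorphism \(i_P^G\sigma \simeq i_{gPg^{-1}}^G\sigma'\), and \(gPg^{-1}\) need not equal \(P'\); one still has to know that normalized parabolic induction from two different parabolics with the same Levi produces the same Jordan--H\"older constituents. This is a nontrivial theorem of Bernstein--Zelevinsky/Casselman, usually proved via intertwining operators or by showing the characters agree, and it is precisely what the paper is citing Harish-Chandra (through Casselman's book, Theorem 6.3.11) for. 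Your write-up does not touch it.

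Second, the second-adjointness argument you sketch for the ``main obstacle'' in (2) does not close the gap as stated. Knowing that the supercuspidal \(\sigma\) is a \emph{subquotient} of \(i_{P_w}^M\tau'\) does not give a nonzero element of \(\Hom_M(i_{P_w}^M\tau',\sigma)\), so you cannot directly invoke \(\Hom_M(i_{P_w}^M\tau',\sigma)\simeq\Hom_{L}(\tau', r_{\ol{P_w}}^M\sigma)\) to contradict the vanishing of \(r_{\ol{P_w}}^M\sigma\). The missing ingredient is Casselman's theorem that an irreducible supercuspidal representation is projective and injective in the category of smooth representations with its central character, which upgrades ``subquotient'' to ``direct summand'' (hence quotient), after which either adjunction applies. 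Your first proposed route via compactly supported matrix coefficients is the standard way to establish exactly this projectivity/injectivity, so the substance is there, but you should make the logical chain explicit rather than presenting the two routes as interchangeable one-liners. With these two points filled in, the remaining bookkeeping in your argument is sound.
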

\begin{proof}
  The first part is due to Jacquet: see \cite[Theorem 5.1.2]{CasselmanBook}.
  The second part seems to be due to Harish-Chandra: see Theorem 6.3.11 loc.\ cit.\ or \cite[Theorem 4.6.1, \S 5.3.1 and Theorem 5.4.4.1]{Silberger_intro_harm} for the ``if'' part.
  The ``only if'' part can be deduced from Bernstein center theory \cite{Deligne_centre_Bernstein}.
  See also \cite{BernsteinZelevinsky_ind1}.
\end{proof}

The \(G(F)\)-conjugacy class of \((M, \sigma)\) in the previous theorem is called the \emph{supercuspidal support} of \(\pi\).

\subsection{Asymptotic properties}

For the rest of this section we assume \(C = \C\).

\begin{definition}
  Let \((V,\pi)\) be a smooth irreducible representation of \(G(F)\).
  Let \(\omega_\pi: Z(G(F)) \to \C^\times\) be its central character.
  If \(\omega_\pi\) is unitary then we say that \(\pi\) is essentially square-integrable if all of its matrix coefficients are square-integrable modulo center:
  \[ \forall v \in V \ \forall \tilde{v} \in \tilde{V} \ \int_{G(F)/Z(G(F))} \left| \langle \pi(g) v, \tilde{v} \rangle \right|^2 dg < \infty. \]

  In general (without assuming that \(\omega_\pi\) is unitary) there is a unique smooth character \(\chi: G(F) \to \R_{>0}\) such that the central character of \(\chi \otimes \pi\) is unitary \cite[Lemma 5.2.5]{CasselmanBook}, and we say that \(\pi\) is essentially square-integrable if \(\chi \otimes \pi\) is.
\end{definition}

If \(\pi\) is an essentially square-integrable irreducible smooth representation of \(G(F)\) and if \(\omega_\pi\) is unitary then \(\pi\) is unitarizable.

Essential square-integrability can be checked on the Jacquet module of a representation, as recalled in Proposition \ref{pro:char_L2_rep} below.
For a Levi subgroup \(M\) of \(G\) we denote by \(A_M\) the largest split torus in the centre of \(M\).
Denote \(\mathfrak{a}_M^* = X^*(A_M) \otimes_\Z \R\).
We have an isomorphism
\begin{align} \label{eq:char_A_M}
  \mathfrak{a}_M^* & \longrightarrow{} \Hom_\cont(A_M(F), \R_{>0}) \\
  \chi \otimes s & \longmapsto{} (x \mapsto ||\chi(x)||^s). \nonumber
\end{align}

\begin{proposition}[{\cite[Proposition III.1.1]{WaldspurgerPlancherel}}] \label{pro:char_L2_rep}
  Let \((V,\pi)\) be an irreducible smooth representation of \(G(F)\).
  Assume that the central character of \(\pi\) is unitary (we can reduce to this case by twisting).
  Then \((V,\pi)\) is essentially square-integrable if and only if for every parabolic subgroup \(P=MN\) of \(G\), the absolute value of any character of \(A_M(F)\) occurring in \(r_P^G \pi\) is a linear combination with positive coefficients of the simple roots of \(A_M\) in \(N\) (via the isomorphism \eqref{eq:char_A_M}).
\end{proposition}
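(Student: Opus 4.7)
The plan is to prove the proposition using Casselman's asymptotic expansion of matrix coefficients in terms of the Jacquet module. After twisting we may assume $\omega_\pi$ is unitary. Fix a minimal parabolic $P_0 = M_0 N_0$, its opposite $\bar{P}_0 = M_0 \bar N_0$, and set $A_0 := A_{M_0}$. Casselman's theorem \cite{CasselmanBook} constructs a canonical $M_0(F)$-equivariant pairing between $r_{P_0}^G \pi$ and $r_{\bar{P}_0}^G \tilde\pi$, and shows that for any $v \in V$, $\tilde v \in \tilde V$,
\[ \langle \pi(a) v, \tilde v \rangle = \delta_{P_0}(a)^{1/2}\, \bigl\langle \pi_{N_0}(a)\, j_{N_0}(v),\, j_{\bar{N}_0}(\tilde v) \bigr\rangle \]
for $a$ deep enough in the closed positive chamber $A_0^+(F)$, where $j_{N_0}\colon V \to V_{N_0}$ is the canonical projection. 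Since $r_{P_0}^G \pi$ has finite length, $A_0(F)$ acts on it through finitely many generalized characters, and expanding the right-hand side yields a finite asymptotic expansion
\[ \langle \pi(a) v, \tilde v \rangle = \sum_{\chi} P_{v,\tilde v,\chi}(a)\, \chi(a), \]
where $\chi$ ranges over characters of $A_0(F)$ appearing in $\pi_{N_0}$ (equivalently, $\chi = \eta\,\delta_{P_0}^{1/2}$ for $\eta$ appearing in $r_{P_0}^G \pi$) and $P_{v,\tilde v,\chi}$ is a polynomial in additive characters, accounting for possibly non-semisimple Jordan blocks.

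Next I would apply the Cartan decomposition $G(F) = K A_0^+(F) K$: on the regular locus the measure decomposes as $dg \asymp \delta_{P_0}(a)^{-1}\, da\, dk\, dk'$. Using $K$-finiteness of $v, \tilde v$ together with the expansion above, the square integral
\[ \int_{G(F)/Z(G(F))} |\langle \pi(g) v, \tilde v\rangle|^2\, dg \]
is comparable to a finite sum of integrals of the form
\[ \int_{A_0^+(F)/Z(G(F))} |\chi(a)|^2\, \delta_{P_0}(a)^{-1}\, |P_{v,\tilde v,\chi}(a)|^2\, da. \]
The polynomial factors grow only polynomially in $\log\|a\|$, so convergence is governed by $|\chi|^2 \delta_{P_0}^{-1} = |\eta|^2$, where $\eta$ is the corresponding character appearing in $r_{P_0}^G \pi$. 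Parameterizing $A_0^+(F)/Z(G(F))$ by the variables $-\log |\alpha(a)| \geq 0$ for $\alpha$ a simple root of $A_0$ in $N_0$, the integral converges if and only if $|\eta|$ is a linear combination with strictly positive coefficients of these simple roots. Conversely, the non-degeneracy of Casselman's pairing furnishes $v, \tilde v$ isolating any prescribed $\eta$, so the criterion for $P_0$ is both necessary and sufficient for square-integrability. To pass to a general standard parabolic $P = MN$, I use transitivity $r_{P_0}^G \pi \simeq r_{P_0 \cap M}^M(r_P^G \pi)$: each character $\eta_M$ of $A_M(F)$ in $r_P^G \pi$ gives rise to characters of $A_0(F)$ in $r_{P_0}^G \pi$ whose restriction to $A_M$ equals $\eta_M$ and which differ from $\eta_M$ (extended by a section $A_M \hookrightarrow A_0$) by a non-negative combination of simple roots of $A_0$ in $N_0 \cap M$. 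A short root-system argument then shows that the $P_0$-criterion and the $P$-criteria (as $P \supseteq P_0$ varies over standard parabolics) are jointly equivalent, and $G(F)$-conjugation covers non-standard parabolics.

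The hard part will be turning Casselman's pointwise asymptotic formula, valid only deep in the cone, into a genuine integral estimate: one must carefully bound the error near the walls of $A_0^+(F)$, control the polynomial factors $P_{v,\tilde v,\chi}$ coming from the non-semisimple action of $A_0(F)$ on the Jacquet module, and verify that the volumes of the double cosets $K a K$ really behave as $\delta_{P_0}(a)^{-1}$ uniformly. These technical points are handled carefully in \cite[\S III.1]{WaldspurgerPlancherel}, which this proof follows.
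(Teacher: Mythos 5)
The paper does not prove this proposition — it cites it directly from Waldspurger (Proposition III.1.1 of \cite{WaldspurgerPlancherel}), which in turn builds on Casselman's work. Your sketch is a correct outline of the standard Casselman–Waldspurger argument: the canonical pairing between $r_{P_0}^G\pi$ and $r_{\bar P_0}^G\tilde\pi$, the resulting asymptotic expansion of matrix coefficients deep in the positive chamber, the Cartan-decomposition volume estimate $\operatorname{vol}(KaK)\asymp\delta_{P_0}(a)^{-1}$, and the reduction via transitivity of Jacquet functors from a general parabolic to the minimal one. The cancellation $|\chi|^2\delta_{P_0}^{-1}=|\eta|^2$ (with $\chi=\eta\delta_{P_0}^{1/2}$) is exactly what makes the normalized Jacquet module the right object here, and you identify it correctly. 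You are also right that the genuinely difficult content — turning a pointwise asymptotic valid only far from the walls into a global $L^2$ estimate, and controlling the polynomial factors arising from non-semisimple $A_0(F)$-action — is precisely where Waldspurger's \S III.1 spends its effort. One minor point worth being careful about in a full write-up: when you pass from the $P_0$-criterion to a general standard $P=MN$ via $r_{P_0}^G\pi\simeq r_{P_0\cap M}^M(r_P^G\pi)$, you need that the simple roots of $A_0$ inside $N_0\cap M$ restrict to $0$ on $A_M$ while those inside $N_0\cap N$ restrict to \emph{strictly} positive multiples of the simple roots of $A_M$ in $N$; this is true but should be stated, since it is what makes "positive combination of $\Delta_0$" project to "positive combination of $\Delta_M$".
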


Replacing ``positive'' by ``non-negative'' in this characterization we get the notion of \emph{tempered} representation.
This is also equivalent to a growth condition on coefficients \cite[Proposition III.2.2]{WaldspurgerPlancherel}.

We have the following implications, for an irreducible smooth representation of \(G(F)\) having unitary central character:
\[ \text{supercuspidal} \Rightarrow{} \text{essentially square-integrable} \Rightarrow{} \text{tempered} \Rightarrow{} \text{unitarizable}. \]
For non-commutative \(G\) none of these implications is an equivalence.
The following result gives a coarse classification of tempered representations in terms of parabolic inductions of essentially square-integrable representations.

\begin{proposition}[{\cite[Proposition III.4.1]{WaldspurgerPlancherel}}] \label{pro:class_temp_L2}
  \begin{enumerate}
  \item Let \(P=MN\) be a parabolic subgroup of \(G\) and \(\sigma\) an essentially square-integrable irreducible smooth representation of \(M(F)\) having unitary central character.
    The induced representation \(i_P^G \sigma\) is semi-simple, has finite length and any irreducible subrepresentation is tempered.
  \item Let \((P, \sigma)\) and \((P', \sigma')\) be two pairs as in the previous point.
    Then \(i_P^G \sigma\) and \(i_{P'}^G \sigma'\) admit isomorphic irreducible subrepresentations if and only if the pairs \((M, \sigma)\) and \((M', \sigma')\) are conjugated by \(G(F)\), and in this case the two induced representations are isomorphic.
  \item For any tempered irreducible smooth representation \(\pi\) of \(G(F)\) there exists a pair \((P, \sigma)\) as in the first point such that \(\pi\) is isomorphic to a subrepresentation of \(i_P^G \sigma\).
  \end{enumerate}
\end{proposition}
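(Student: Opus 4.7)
The plan is to handle the three parts essentially in the order (1), (3), (2), since (3) feeds into the uniqueness assertion of (2).

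For part (1), first note that $\sigma$ is unitarizable: by assumption the central character of $\sigma$ is unitary and $\sigma$ is essentially square-integrable, so its matrix coefficients are square-integrable modulo center, giving an invariant Hermitian inner product via the $L^2$-pairing (this is the statement invoked immediately before Proposition \ref{pro:char_L2_rep}). Since normalized parabolic induction preserves unitarity (the factor $\delta_P^{1/2}$ was inserted for exactly this reason), $i_P^G\sigma$ is admissible, of finite length, and unitarizable; a unitary admissible representation of finite length is a finite direct sum of its irreducible constituents, which gives semi-simplicity. To see that each irreducible subrepresentation $\pi$ is tempered, I would verify the Jacquet-module criterion of Proposition \ref{pro:char_L2_rep} by computing, for an arbitrary parabolic $P'=M'N'$, the exponents on $A_{M'}$ that occur in $r_{P'}^G i_P^G \sigma$. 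The geometric lemma of Bernstein--Zelevinsky expresses this Jacquet module as a successive extension whose graded pieces are $i^{M'}_{*}(w \cdot \sigma)_{N''}$-style terms indexed by double cosets $W^{M'}\backslash W/W^M$; the exponents on $A_{M'}$ are therefore restrictions of exponents on $A_{wMw^{-1}}$ of $w\cdot\sigma$, composed with $A_{M'}\hookrightarrow A_{wMw^{-1}}$. Applying Proposition \ref{pro:char_L2_rep} to $\sigma$ on $M$, these exponents are strictly positive combinations of simple roots of $A_{wMw^{-1}}$ in $wNw^{-1}$; pulling back to $A_{M'}$, positive roots may become zero but not negative (the positive system is compatible), so the combination is a non-negative combination of simple roots of $A_{M'}$ in $N'$. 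This is the tempered condition.

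For part (3), let $\pi$ be tempered irreducible, and choose a Levi $M$ of $G$ that is \emph{minimal} among Levis $M''$ of standard parabolics $P''=M''N''$ such that $r_{P''}^G \pi \neq 0$. Let $\sigma$ be any irreducible subquotient of $r_P^G\pi$, where $P=MN$. I claim $\sigma$ is essentially square-integrable. Indeed, for any proper parabolic $Q=LU$ of $M$, transitivity of the Jacquet functor identifies $r_Q^M \sigma$ as a subquotient of $r_{QN}^G \pi$, and the minimality of $M$ forces any exponent occurring in $r_Q^M\sigma$, viewed on $A_M\subset A_L$, to land in a region where the $A_L$-exponent strictly gains in the direction of the simple roots appearing in $U$; combined with the tempered bound on $r_{QN}^G\pi$ coming from Proposition \ref{pro:char_L2_rep} this yields strict positivity on $A_L$, i.e.\ essential square-integrability of $\sigma$ by Proposition \ref{pro:char_L2_rep}. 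Now apply Frobenius reciprocity: a nonzero $M(F)$-equivariant map $r_P^G\pi \onto \sigma$ (replace $\sigma$ by an irreducible quotient if needed, after choosing $P$ opposite) yields a nonzero $G(F)$-equivariant map $\pi\to i_P^G\sigma$, which is an embedding since $\pi$ is irreducible.

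For part (2), the ``if'' direction is immediate from the fact that induction depends only on the conjugacy class of $(M,\sigma)$ (independence from the choice of $P$ with given Levi $M$ being the standard fact, here guaranteed by part (1) since both inductions are semi-simple of the same length and share a tempered constituent $\sigma$-data). For ``only if'', suppose $\pi$ is an irreducible subrepresentation of both $i_P^G\sigma$ and $i_{P'}^G\sigma'$. Applying Theorem \ref{thm:cusp_supp} to $\sigma$ in $M$ and transitivity of parabolic induction, the supercuspidal support of $\pi$ is obtained from that of $\sigma$, and likewise for $\sigma'$; by uniqueness of supercuspidal support (part (2) of Theorem \ref{thm:cusp_supp}), the supercuspidal supports of $\sigma$ and $\sigma'$ are $G(F)$-conjugate, say both equal to a class $(L,\tau)$. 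To upgrade this to $G(F)$-conjugacy of $(M,\sigma)$ and $(M',\sigma')$, I would use the minimality argument of part (3) in reverse: $M$ and $M'$ are both minimal Levis with $r^G_{(\cdot)}\pi$ containing an essentially square-integrable constituent with supercuspidal support $(L,\tau)$, and such a Levi is unique up to $G(F)$-conjugacy (it is characterized by the centralizer in $G$ of $A_M$, and $A_M$ is in turn determined by the convex-geometric data of the exponents of $\pi$ relative to $L$). The matching constituent $\sigma$ is then uniquely determined up to isomorphism as the unique essentially square-integrable constituent of $r_P^G\pi$ with the prescribed supercuspidal support.

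The main obstacle is the tempered-exponents computation in part (1) and the minimality-plus-uniqueness argument driving parts (3) and (2): while the machinery of Jacquet modules, the geometric lemma, and Frobenius reciprocity is standard, turning ``strictly positive'' into ``non-negative'' and, conversely, locating the unique minimal Levi supporting an essentially square-integrable Jacquet-module constituent, both rely on rather delicate convex-geometric bookkeeping with exponents in $\mathfrak{a}_M^*$.
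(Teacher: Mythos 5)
Note first that the paper itself does not prove this proposition; it cites \cite[Proposition III.4.1]{WaldspurgerPlancherel} without comment, so there is no in-paper proof to compare against. Turning to your argument: part (3) has a genuine gap, and part (2) inherits it.

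In part (3) you choose $M$ minimal among Levis of standard parabolics with nonvanishing Jacquet module. That minimality produces the \emph{supercuspidal} support: by transitivity $r^M_Q(r^G_P\pi)=r^G_{QN}\pi=0$ for every proper parabolic $Q=LU$ of $M$, so every constituent $\sigma$ of $r^G_P\pi$ is automatically supercuspidal, and your subsequent exponent bookkeeping for $r^M_Q\sigma$ is vacuous because those modules vanish. Worse, this $\sigma$ in general does \emph{not} have unitary central character: by Proposition \ref{pro:char_L2_rep} its $A_M$-exponent is a non-negative, but typically nonzero, combination of simple roots. Already for the (tempered, indeed square-integrable) Steinberg representation of $\GL_2$, the cuspidal support is the non-unitary character $|\cdot|^{1/2}\otimes|\cdot|^{-1/2}$ on the diagonal torus, whereas the correct pair $(P,\sigma)$ for part (3) is $(G,\pi)$ itself. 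The Levi carrying the essentially square-integrable support is generally strictly \emph{larger} than the Levi carrying the cuspidal support. The extremality should instead be taken over parabolics $P=MN$ for which $r^G_P\pi$ has a constituent with unitary central character (trivial $A_M$-exponent); with that modified choice the tempered inequalities of Proposition \ref{pro:char_L2_rep} push the remaining exponents into the strictly positive cone, giving essential square-integrability, and Frobenius reciprocity then produces the embedding. The ``only if'' half of your part (2) inherits the same gap: matching cuspidal supports of $\sigma$ and $\sigma'$ does not by itself give $G(F)$-conjugacy of $(M,\sigma)$ and $(M',\sigma')$, and the ``reverse minimality'' you invoke rests on the flawed choice above; the actual disjointness in \cite[Proposition III.4.1]{WaldspurgerPlancherel} comes from a separate argument via intertwining operators and Harish-Chandra's commuting algebra theorem. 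Your part (1) is correct in outline, though the step where the restricted exponents are claimed to stay non-negative under the geometric lemma requires working with minimal-length double-coset representatives, which is exactly the ``delicate bookkeeping'' you flagged.
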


\begin{remark}
  For \(G=\GL_n\), parabolically induced representations as in Proposition \ref{pro:class_temp_L2} are always irreducible \cite[\S 0.2]{Bernstein_unitaryGL} and so the proposition completely classifies tempered representations in terms of essentially square-integrable representations of smaller general linear groups.
  Recall that for general linear groups essentially square-integrable representations can be explicitly classified in terms of supercuspidal representations of Levi subgroups \cite[Theorem 9.3]{Zelevinsky_ind2}.

  For arbitrary \(G\) such induced representations are \emph{generically} irreducible (see \cite[Proposition IV.2.2]{WaldspurgerPlancherel} for a precise statement), but decomposing such induced representations is a subtle problem in general. 
\end{remark}

The tempered representations are exactly the ones occurring in Harish-Chandra's Plancherel formula (see \cite{WaldspurgerPlancherel}, \cite{Silberger_plancherel}), expressing the values of any locally constant and compactly supported \(f: G(F) \to \C\) (or more generally, a Schwartz function) in terms of the action of \(f\) in tempered representations (or expressing \(f(1)\) in terms of the traces of \(f\) in tempered representations).

Finally the ``Langlands classification'', that we recall below, classifies irreducible smooth representations of \(G(F)\) in terms of tempered representations of Levi subgroups.
For a connected reductive group \(M\) denote by \(X^*(M)\) the abelian group of morphisms \(M_{\ol{F}} \to \GL_{1,\ol{F}}\), so that \(X^*(M)^\Gamma\) is identified with the group of morphisms \(M \to \GL_{1,F}\).
The restriction morphism \(X^*(M)^\Gamma \to X^*(A_M)\) is an isogeny (it is injective with finite cokernel) and so it induces an isomorphism \(\res^M_{A_M}: X^*(M)^\Gamma \otimes_\Z \R \simeq \mathfrak{a}_M^*\).
We have an isomorphism
\begin{align} \label{eq:pos_char_M}
  X^*(M)^\Gamma \otimes_\Z \R & \longrightarrow{} \Hom_\cont(M(F), \R_{>0}) \\
  \chi \otimes s & \longmapsto{} (x \mapsto |\chi(x)|^s). \nonumber
\end{align}
Fix a minimal parabolic subgroup \(P_0\) of \(G\) and a Levi factor \(M_0\) of \(P_0\).
Let \(Y \subset X^*(A_{M_0})\) be the subgroup of characters which are trivial on \(A_{M_0} \cap G_\der\).
Let \(R(A_{M_0},G)\) be the set of roots of \(A_{M_0}\) in \(G\).
The rational Weyl group \(W_0 := N(A_{M_0}, G(F))/M_0(F)\) acts on \(A_{M_0}\), thus also on \(\mathfrak{a}_{M_0}^*\).
By \cite[Exposé XXVI Théorème 7.4]{SGA3_III}\footnote{See also \cite[Corollaire 5.8]{BorelTits_gpesred}, although the proof seems to be incomplete in the non-reduced case.} there is a unique root datum (possibly non-reduced)
\[ (X^*(A_{M_0}), R(A_{M_0},G), X_*(A_{M_0}), R^\vee(A_{M_0},G)) \]
such that the associated Weyl group (seen as a group of automorphisms of \(A_{M_0}\)) is \(W_0\).
Let \(\Delta \subset X^*(A_{M_0})\) be the set of simple roots for the order corresponding to \(P_0\).
Fix a \(W_0\)-invariant inner product \((\cdot,\cdot)\) on \(\mathfrak{a}_{M_0}^*\) (e.g.\ by averaging an arbitrary inner product).
For \(M\) a standard Levi subgroup of \(G\) the restriction map \(X^*(A_{M_0}) \to X^*(A_M)\) induces a surjective map \(\res^{A_{M_0}}_{A_M}: \mathfrak{a}_{M_0}^* \to \mathfrak{a}_M^*\).
We also have a composite map in the other direction
\[ j^M_{M_0}: \mathfrak{a}_M^* \xrightarrow{(\res^M_{A_M})^{-1}} X^*(M)^\Gamma \otimes_\Z \R \xrightarrow{\res^M_{M_0}} X^*(M_0)^\Gamma \otimes_\Z \R \xrightarrow{\res^{M_0}_{A_{M_0}}} \mathfrak{a}_{M_0}^* \]
and the composition \(\res^{A_{M_0}}_{A_M} \circ j^M_{M_0}\) is \(\id_{\mathfrak{a}_M^*}\).
In fact one can check that \(j^M_{M_0} \circ \res^{A_{M_0}}_{A_M}\) is the orthogonal projection \(\mathfrak{a}_{M_0}^* \to j^M_{M_0}(\mathfrak{a}_M^*)\).

\begin{theorem}[{\cite[Theorem 4.1]{Silberger_lang_class}, \cite[\S XI.2]{BorelWallach}, \cite[Theorem 3.11]{Dat_nutemp}}] \label{thm:Langlands_classification}
  \begin{enumerate}
  \item Let \(P\) be a standard Levi subgroup of \(G\) (with respect to \(P_0\)) and \(M\) its Levi factor containing \(M_0\).
    Let \(\sigma\) be a tempered irreducible smooth representation of \(M(F)\) (in particular we assume that its central character is unitary).
    Let \(\nu \in X^*(M)^\Gamma \otimes_\Z \R\) be such that for any \(\alpha \in \Delta\) not occurring in \(M\) we have \((\res^M_{A_{M_0}} \nu, \alpha) > 0\).
    Consider \(\nu\) as a character of \(M(F)\) via \eqref{eq:pos_char_M}, and denote by \(\sigma_\nu\) the twist of \(\sigma\) by this character.
    Then the induced representation \(i_P^G(\sigma_\nu)\) admits a unique irreducible quotient \(J(P,\sigma,\nu)\).
    Let \(\ol{P}\) be a parabolic subgroup of \(G\) which is opposite to \(P\).
    We have \(\dim_{\C} \Hom_G(i_P^G (\sigma_\nu), i_{\ol{P}}^G (\sigma_\nu)) = 1\) and any non-zero element in this line identifies \(J(P,\sigma,\nu)\) with the unique irreducible subrepresentation of \(i_{\ol{P}}^G(\sigma_\nu)\).
  \item Let \(\pi\) be an irreducible smooth representation of \(G(F)\).
    There exists a unique triple \((P,\sigma,\nu)\) as above such that \(\pi\) is isomorphic to \(J(P,\sigma,\nu)\).
  \end{enumerate}
\end{theorem}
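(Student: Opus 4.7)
The plan is to handle the two parts in sequence. For part (1), I would construct the \emph{standard intertwining operator}
\[
  M(\ol{P}|P,\sigma_\nu) : i_P^G(\sigma_\nu) \longrightarrow i_{\ol{P}}^G(\sigma_\nu),
\]
defined on smooth vectors by the formal integral \(f \mapsto \int_{\ol{N}(F)} f(\ol{n} \cdot)\, d\ol{n}\), and check that the strict positivity of \(\nu\) forces this integral to converge absolutely. Convergence reduces, via the growth bounds on matrix coefficients of the tempered representation \(\sigma\) implicit in Proposition \ref{pro:char_L2_rep}, to the convergence of a Gindikin--Karpelevich-type product over the roots of \(A_M\) occurring in \(N\). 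This produces a non-zero element of \(\Hom_G(i_P^G \sigma_\nu, i_{\ol{P}}^G \sigma_\nu)\). I would then prove that this Hom-space is one-dimensional by computing the Jacquet module \(r_{\ol{P}}^G(i_P^G \sigma_\nu)\) through the Bernstein--Zelevinsky geometric lemma: the resulting filtration is indexed by \((P,\ol{P})\)-double cosets in \(G(F)\), and the strict positivity of \(\nu\) in the chamber singled out in the theorem statement ensures that \(\sigma_\nu\) occurs as an \(A_M\)-eigenconstituent only from the identity double coset, and with multiplicity one. Combined with Frobenius reciprocity and passage to contragredients, this forces the image of any non-zero intertwiner to be simultaneously the unique irreducible subrepresentation of \(i_{\ol{P}}^G(\sigma_\nu)\) and the unique irreducible quotient of \(i_P^G(\sigma_\nu)\); I call it \(J(P,\sigma,\nu)\).

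For part (2), I would read off the Langlands datum of an arbitrary irreducible \(\pi\) from the \(A_{M_0}\)-exponents of \(r_{P_0}^G \pi\). Theorem \ref{thm:cusp_supp} guarantees that this is a non-empty finite set in \(\mathfrak{a}_{M_0}^* \otimes_\R \C\). Choose an exponent \(\lambda\) whose real part is maximal for the dominance order attached to \(P_0\). A standard Jacquet-module computation then shows that there is a unique standard parabolic \(P = MN\) and a unique \(\nu \in X^*(M)^\Gamma \otimes_\Z \R\), strictly positive in the sense of the theorem, such that the projection of \(\mathrm{Re}(\lambda)\) onto \(j^M_{M_0}(\mathfrak{a}_M^*)\) equals \(j^M_{M_0}(\nu)\), and that after untwisting by \(\nu\) the corresponding \(A_M\)-isotypic part of \(r_P^G(\pi)\) is the Jacquet module of a tempered irreducible \(\sigma\) of \(M(F)\). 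Frobenius reciprocity furnishes a non-zero map \(\pi \to i_{\ol{P}}^G(\sigma_\nu)\), so part (1) applied to the opposite parabolic yields \(\pi \simeq J(P,\sigma,\nu)\). Uniqueness follows because the triple can be reconstructed from \(\pi\): \(\nu\) is the top real exponent, \(P\) is determined by \(\nu\), and \(\sigma\) is extracted from the \(\nu\)-isotypic part of \(r_P^G \pi\) via Proposition \ref{pro:class_temp_L2}.

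The main obstacle, in my view, lies entirely in part (1), specifically in establishing \(\dim_{\C} \Hom_G(i_P^G(\sigma_\nu), i_{\ol{P}}^G(\sigma_\nu)) = 1\). On the one hand this requires the convergence of the standard intertwining integral in the prescribed strict positivity region, which rests on delicate asymptotics of tempered matrix coefficients; on the other hand it requires ruling out unwanted \(\sigma_\nu\)-constituents of \(r_{\ol{P}}^G i_P^G \sigma_\nu\) coming from non-trivial double cosets, which is the geometric-lemma analysis of Weyl-twist exponent shifts. This double input is exactly what rigidifies the Langlands datum associated to \(\pi\) and makes the classification unique; once it is in hand, everything else in the theorem is a formal consequence.
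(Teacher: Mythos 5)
The paper does not prove this theorem; it simply cites Silberger, Borel--Wallach and Dat, so there is no internal proof to compare against. Your sketch follows the standard route taken in those references: construct the standard intertwining operator $M(\ol{P}|P,\sigma_\nu)$ by integration over $\ol{N}(F)$, verify convergence from the strict positivity of $\nu$, compute the multiplicity of $\sigma_\nu$ in the Jacquet module $r^G_{\ol{P}} i_P^G \sigma_\nu$ via the Bernstein--Zelevinsky geometric lemma, and deduce the $1$-dimensionality of the Hom space and the identification of image, unique quotient and unique sub; then for the classification, locate the Langlands datum through the real parts of the $A_{M_0}$-exponents of $r^G_{P_0}\pi$. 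This is essentially correct and is the argument one finds in Silberger and in Borel--Wallach Chapter XI.

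Two small points of care. First, the convergence of the intertwining integral is established from Harish-Chandra's uniform decay estimates for matrix coefficients of tempered representations in the positive chamber, not literally from the Gindikin--Karpelevich product; the latter computes the value of the operator on spherical vectors once convergence is known, and in any case $\sigma$ need not be unramified here. Second, passing from ``$\dim \Hom_G(i_P^G\sigma_\nu, i_{\ol{P}}^G\sigma_\nu)=1$'' to ``unique irreducible quotient'' requires the auxiliary observation (via contragredients and the isomorphism $\widetilde{i_{\ol{P}}^G\sigma_\nu}\simeq i_{\ol{P}}^G\tilde\sigma_{-\nu}$, or via second adjointness) that every irreducible quotient of $i_P^G\sigma_\nu$ embeds into $i_{\ol{P}}^G\sigma_\nu$: otherwise a multiplicity of distinct quotients could not be ruled out by the Hom computation alone. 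You do gesture at this with ``passage to contragredients,'' but it deserves to be made explicit, as it is the pivot of the whole argument. For part (2), the selection of $(P,\nu)$ from the exponents uses Langlands's combinatorial convexity lemma (decomposition of any $\lambda\in\mathfrak{a}_{M_0}^*$ as a strictly dominant part on a face plus a non-positive combination of simple roots), which is hidden in your phrase ``unique standard parabolic $P$ and unique $\nu$''; this lemma is what guarantees both existence and uniqueness of the triple.
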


The analogous theorem for \(F=\R\) was proved first (see \cite[Lemma 3.13, 3.14 and 4.2]{Langlands_class} or \cite[Chapter 5]{Wallach_realred1}), and inspired the non-Archimedean version.
The positivity condition appearing in Theorem \ref{thm:Langlands_classification} may be formulated using the absolute root system instead of the relative one, as the following lemma shows.
This will prove useful to translate this positivity condition on the dual side.

\begin{lemma} \label{lem:Langlands_class_abs}
  Let \(P = MN\) be a parabolic subgroup of \(G\) with unipotent radical \(N\) and Levi factor \(M\).
  Assume that \(M\) contains \(M_0\).
  Let \(T\) be a maximal torus of \(M_{\ol{F}}\).
  The following conditions on \(\nu \in X^*(M)^\Gamma \otimes_{\Z} \R\) are equivalent:
  \begin{itemize}
  \item for any root \(\alpha\) of \(A_{M_0}\) in \(N\) we have \((\res^M_{A_{M_0}} \nu, \alpha^\vee) > 0\),
  \item for any root \(\beta\) of \(T\) in \(N_{\ol{F}}\) we have \(\langle \res^M_T \nu, \beta^\vee \rangle > 0\).
  \end{itemize}
\end{lemma}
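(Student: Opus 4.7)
\emph{Proof plan.} The plan is to reduce both conditions to pairings on the torus $T$ and invoke the relation between relative and absolute coroot systems.

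First I would choose a maximal torus $T$ of $M_{\ol{F}}$ containing $A_{M_0,\ol{F}}$; all such choices are $M(\ol{F})$-conjugate and both conditions are invariant under such conjugation, so this is harmless. Since $M_0$ and $M$ are both Levi subgroups of $G$ they have the same rank, hence $T \subseteq Z_{M_{\ol{F}}}(A_{M_0,\ol{F}}) = M_{0,\ol{F}}$ is also a maximal torus of $M_{0,\ol{F}}$, and $W_{M_0} := W(M_{0,\ol{F}}, T)$ sits inside $W_M := W(M_{\ol{F}}, T)$. Setting $\mu := \res^M_T \nu \in X^*(T) \otimes_\Z \R$, the element $\mu$ is $W_M$-invariant (since $\nu$ extends to a character of $M$), and $\res^M_{A_{M_0}} \nu = \res^T_{A_{M_0}} \mu$ via the inclusion $A_{M_0,\ol{F}} \subseteq T$.

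Next I would note that every root $\beta$ of $T$ in $N_{\ol{F}}$ restricts to a non-zero character $\alpha := \res^T_{A_{M_0}} \beta$ of $A_{M_0}$, and every root of $A_{M_0}$ in $N$ arises in this way: indeed, $M = Z_G(A_M)$ with $A_M \subseteq A_{M_0}$, so any $T$-weight on $\Lie(G)_{\ol{F}}$ not occurring in $\Lie(M)_{\ol{F}}$ is non-trivial on $A_M$, and a fortiori on $A_{M_0}$. Since $W_{M_0}$ acts trivially on $A_{M_0,\ol{F}}$ (central in $M_0$), it preserves each fiber of $\beta \mapsto \alpha$. The crucial input is the Borel--Tits relation between relative and absolute coroots: for each relative root $\alpha$, the coroot $\alpha^\vee$, viewed in $X_*(T) \otimes \Q$ through the inclusion $X_*(A_{M_0}) \hookrightarrow X_*(T)$, is a positive rational multiple of a sum of absolute coroots $(\beta')^\vee$ with $\beta'$ ranging over the absolute roots restricting to $\alpha$ (with additional contributions from those restricting to $2\alpha$ in the non-reduced case). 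Combined with the $W_M$-invariance of $\mu$ (which forces all $\langle \mu, (\beta')^\vee \rangle$ with $\beta'$ in a single $W_{M_0}$-orbit to coincide), this shows that $\langle \res^T_{A_{M_0}} \mu, \alpha^\vee \rangle$ and $\langle \mu, \beta^\vee \rangle$ share the same sign; modulo the inner-product identification $\mathfrak{a}_{M_0}^* \simeq X_*(A_{M_0}) \otimes \R$ (introducing only further positive scalars), this yields the equivalence of the two stated conditions.

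The main obstacle is the Borel--Tits identification of $\alpha^\vee$ as a positive linear combination of absolute coroots, which requires a short case analysis (reduced versus non-reduced relative roots) but is standard structure theory; the rest of the argument is bookkeeping.
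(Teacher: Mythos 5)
Your plan takes a genuinely different route from the paper's proof. The paper never touches the relative coroots $\alpha^\vee$: it fixes a $W\rtimes\Gamma$-invariant inner product $(\cdot,\cdot)_T$ on $\mathfrak{t}^*$, proves that the image of $\res^{C_0}_T$ equals the orthogonal complement $K^\perp$ of $\ker(\res^T_{A_{M_0}})$, and deduces the \emph{equality} $(\res^{C_0}_{A_{M_0}}\nu,\res^T_{A_{M_0}}\beta)_{A_{M_0}}=(\res^{C_0}_T\nu,\beta)_T$. Together with the surjectivity of $R(T,N_{\ol F})\to R(A_{M_0},N)$ this gives the equivalence directly and uniformly, with no case distinction. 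Your plan instead tries to expand $\alpha^\vee$ in absolute coroots and run a sign argument; this is a plausible alternative, but as written it has two real gaps.

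First, the invariance you invoke is insufficient. You only use that $\mu=\res^M_T\nu$ is $W_M$-invariant, hence $W_{M_0}$-invariant, and you assert that this forces $\langle\mu,(\beta')^\vee\rangle$ to be constant on the relevant set of $\beta'$ because they lie in a single $W_{M_0}$-orbit. But when $G$ is quasi-split, $M_0$ is a torus, $W_{M_0}$ is trivial, and the fibres of $R(T,N_{\ol F})\to R(A_{M_0},N)$ are Galois orbits, not $W_{M_0}$-orbits. You would need the $\Gamma$-invariance of $\mu$ (for the twisted action used to define $\brd_F$), which does hold because $\nu\in X^*(M)^\Gamma\otimes_\Z\R$, but you never invoke it. Even granting both $W_{M_0}$- and $\Gamma$-invariance, the claim that each fibre is a \emph{single} orbit under the combined group is exactly the kind of structure-theory input that needs a precise citation, not a hand-wave; without it, one cannot rule out a fibre splitting into two orbits on which $\langle\mu,\cdot\rangle$ takes values of opposite sign, in which case the sign of $\langle\mu,\alpha^\vee\rangle$ would tell you nothing about the sign of $\langle\mu,\beta^\vee\rangle$ for an individual $\beta$.

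Second, the ``Borel--Tits relation between relative and absolute coroots'' is asserted but not pinned down, and the precise form matters. What you need is that $\alpha^\vee$, pushed into $X_*(T)\otimes\Q$, is a non-negative combination of the $(\beta')^\vee$ with $\beta'$ in the fibre over $\alpha$ (and possibly $2\alpha$), \emph{with the coefficient pattern invariant under the same group used in the orbit argument}, so that, once you know the orbit claim, all terms pair with $\mu$ with the same sign. In the non-reduced case this requires checking exactly which coroots enter and with what weights; ``a short case analysis'' is doing a lot of unstated work here. The paper's inner-product argument sidesteps both issues at once: once $\operatorname{Im}\res^{C_0}_T=K^\perp$ is established, the comparison of pairings is an equality, not merely a sign statement, and no orbit structure or coroot decomposition is needed.
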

\begin{proof}
  In this proof we denote by \(\langle -, - \rangle_{A_{M_0}}\) (resp.\ \(\langle -, - \rangle_T\)) the canonical pairing \(X^*(A_{M_0}) \times X_*(A_{M_0}) \to \Z\) (resp.\ \(X^*(T) \times X_*(T) \to \Z\)), and \((-,-)_{A_{M_0}}\) for \((-,-)\).
  For \(\alpha \in R(A_{M_0}, G)\) with corresponding coroot \(\alpha^\vee \in X_*(A_{M_0})\) we have \(\langle -, \alpha^\vee \rangle_{A_{M_0}} = 2 (\alpha, \alpha)_{A_{M_0}}^{-1} (-, \alpha)_{A_{M_0}}\), so the first condition is equivalent to \(\langle \res^M_{A_{M_0}} \nu, \alpha^\vee \rangle_{A_{M_0}} > 0\) for all \(\alpha \in R(A_{M_0}, N)\), and does not depend on the choice of an invariant inner product \((-,-)_{A_{M_0}}\).
  This will allow us to choose a particular invariant inner product below.

  Up to conjugating \(T\) by \(M(\ol{F})\) (which leaves the second condition invariant) we may assume that \(T\) is a maximal torus of \(M_{0,\ol{F}}\), in particular \(A_{M_0,\ol{F}}\) is contained in \(T\).
  We denote by \(W\) the absolute Weyl group \(N(T, G(\ol{F}))/T(\ol{F})\).
  Choose a minimal parabolic subgroup \(P_0\) of \(G\) containing \(M_0\) and contained in \(P\) (this amounts to choosing a minimal parabolic subgroup of \(M\) containing \(M_0\)), and a Borel subgroup \(B \subset P_{0,\ol{F}}\) of \(G_{\ol{F}}\) containing \(T\) (this amounts to choosing a Borel subgroup of \(M_{0,\ol{F}}\) containing \(T\)).
  We have \cite[\S 6.2]{BorelTits_gpesred} an action of \(\Gamma\) (factoring through a finite Galois group) on \(X^*(T)\), leaving \(R(T,G_{\ol{F}})\) and its subset \(\Delta(T,B)\) of simple roots invariant.
  This action extends to an action of \(W \rtimes \Gamma\) on \(X^*(T)\).
  Choose an inner product \((-,-)_T\) on \(\mathfrak{t}^* := X^*(T) \otimes_{\Z} \R\) invariant under this action.
  Consider the restriction map \(\res^T_{A_{M_0}}: \mathfrak{t}^* \to \mathfrak{a}_{M_0}^*\) and its kernel \(K\).
  It identifies \(\mathfrak{a}_{M_0}^*\) with the orthogonal (for \((\cdot,\cdot)_T\)) of \(K\) in \(\mathfrak{t}^*\), and as explained in \cite[\S 6.10]{BorelTits_gpesred} it induces a \(W_0\)-invariant inner product \((\cdot,\cdot)_{A_{M_0}}\) on \(\mathfrak{a}_{M_0}^*\).
  More precisely \cite[\S 6.10]{BorelTits_gpesred} establishes this fact in the case where \(G\) is semi-simple, but we briefly explain how to reduce to this case.
  Denoting by \(Z(G)^0\) the connected center of \(G\), \(T_{\der} = T \cap G_{\der,\ol{F}}\) and \(A_{M_0,\der} = A_{M_0} \cap G_{\der}\) we have commutative diagrams
  \begin{equation} \label{eq:char_G_center_Tder_isog}
    \begin{tikzcd}
      X^*(G) \otimes_{\Z} \R \ar[r, "{\sim}" above, "{\res}" below] \ar[d, hook, "{\res}"] & X^*(Z(G)^0) \otimes_{\Z} \R \ar[d, hook] \\
      \mathfrak{t}^* \ar[r, "{\sim}" above, "{\res}" below] & X^*(Z(G)^0) \otimes_{\Z} \R \oplus \mathfrak{t}_{\der}^*
    \end{tikzcd}
  \end{equation}
  \begin{equation} \label{eq:char_G_center_Ader_isog}
    \begin{tikzcd}
      X^*(G)^\Gamma \otimes_{\Z} \R \ar[r, "{\sim}" above, "{\res}" below] \ar[d, hook, "{\res}"] & \mathfrak{a}_G^* \ar[d, hook] \\
      \mathfrak{a}_{M_0}^* \ar[r, "{\sim}" above, "{\res}" below] & \mathfrak{a}_G^* \oplus \mathfrak{a}_{M_0,\der}^*
    \end{tikzcd}
  \end{equation}
  where the right vertical maps are \(x \mapsto (x,0)\).
  The bottom isomorphism of \eqref{eq:char_G_center_Tder_isog} identifies \((\mathfrak{t}^*)^W\) with \(X^*(Z(G)^0) \otimes_{\Z} \R\), in particular the direct sum is orthogonal for \((\cdot,\cdot)_T\).
  The two diagrams \eqref{eq:char_G_center_Tder_isog} and \eqref{eq:char_G_center_Ader_isog} are part of an obvious commutative cubic diagram (all additional maps are restriction maps, except for the inclusion \(X^*(G)^\Gamma \otimes_{\Z} \R \to X^*(G) \otimes_{\Z} \R\)).
  This shows that the direct sum \(\mathfrak{a}_G^* \oplus \mathfrak{a}_{M_0,\der}^*\) is orthogonal for \((\cdot,\cdot)_{A_{M_0}}\) and since \cite[\S 6.10]{BorelTits_gpesred} shows that the restriction of this inner product to \(\mathfrak{a}_{M_0,\der}^*\) is \(W_0\)-invariant we deduce that \((\cdot,\cdot)_{A_{M_0}}\) is \(W_0\)-invariant

  Let \(C_0\) be the largest quotient of \(M_0\) which is a split torus, i.e.\ \(X^*(C_0) = X^*(M_0)^\Gamma\).
  We now check that the image of
  \[ \res^{C_0}_T: X^*(C_0) \otimes_{\Z} \R \longrightarrow \mathfrak{t}^* \]
  is precisely the orthogonal (for \((\cdot,\cdot)_T\)) of \(K\).
  Since composing with \(\res^T_{A_{M_0}}\) yields an isomorphism \(X^*(C_0) \otimes_{\Z} \R \simeq \mathfrak{a}_{M_0}^*\) it is enough to check that the image of \(\res^{C_0}_T\) is orthogonal to \(K\).
  It follows from \cite[Corollaire 6.9]{BorelTits_gpesred} (and consideration of the commutative diagrams \eqref{eq:char_G_center_Tder_isog} and \eqref{eq:char_G_center_Ader_isog}) that \(K\) is generated by the simple roots \(\beta \in \Delta(T,B)\) in \(K\), the differences \(\beta-\beta'\) where \(\beta, \beta' \in \Delta(T,B)\) are in the same Galois orbits, and the elements of \(X^*(G)^{N_\Gamma=0}\) (elements killed by averaging over \(\Gal(E/F)\) for some large enough finite Galois subextension \(E\) of \(\ol{F}/F\)).
  Let us check on these generators that an arbitrary \(\lambda \in X^*(C_0)\) is orthogonal to \(K\).
  \begin{itemize}
  \item If \(\beta \in \Delta(T,B)\) has trivial restriction to \(A_{M_0}\) then it belongs to \(R(T,M_{0,\ol{F}})\) and so \(\beta^\vee \in X_*(T)\) factors through \(T \cap (M_{0,\ol{F}})_{\der}\), so \(\langle \res^{C_0}_T \lambda, \beta^\vee \rangle_T\) vanishes.
    We have \(\langle -, \beta^\vee \rangle_T = 2 (\beta,\beta)_T^{-1} (\beta,-)_T\) because \((\cdot,\cdot)_T\) is \(W\)-invariant, so \((\res^{C_0}_T \lambda, \beta)_T = 0\).
  \item If \(\beta,\beta' \in \Delta(T,B)\) are in the same orbit under \(\Gamma\) then we have
    \[ (\beta, \res^{C_0}_T \lambda)_T = (\beta', \res^{C_0}_T \lambda)_T \]
    because \(\res^{C_0}_T \lambda\) is invariant under \(\Gamma\) and \((\cdot,\cdot)_T\) is \(\Gamma\)-invariant.
  \item Finally we have to check that for \(\mu \in X^*(G)^{N_\Gamma=0}\) the inner product \((\res^G_T \mu, \res^{C_0}_T \lambda)_T\) vanishes.
    This again follows from the \(\Gamma\)-invariance of \((\cdot,\cdot)_T\) and \(\res^{C_0}_T \lambda\) using the commutative diagram \eqref{eq:char_G_center_Tder_isog} and the fact that the direct sum in this diagram is orthogonal.
  \end{itemize}
  We have proved \(\operatorname{Im} \res^{C_0}_T = K^\perp\), which implies for \(\lambda \in X^*(C_0) \otimes_{\Z} \R\) and \(\beta \in \mathfrak{t}^*\) the equality
  \[ (\res^{C_0}_{A_{M_0}} \lambda, \res^T_{A_{M_0}} \beta)_{A_{M_0}} = (\res^{C_0}_T \lambda, \beta)_T. \]
  We apply this to \(\lambda = \nu\) and \(\beta \in R(T,N_{\ol{F}})\).
  Because the restriction map \(\res^T_{A_{M_0}}\) induces a surjective map \(R(T,N_{\ol{F}}) \to R(A_{M_0}, N)\) the equivalence between the two conditions is now clear.
\end{proof}

In these notes we say nothing of the natural question of classifying unitary representations of connected reductive groups.

\subsection{Harish-Chandra characters}

Denote by \(C^\infty_c(G(F))\) the space of locally constant and compactly supported functions \(G(F) \to \C\).
Recall that any such function is bi-invariant under some compact open subgroup of \(G(F)\).
Fix a Haar measure on \(G(F)\).
Let \((V,\pi)\) be an admissible representation of \(G(F)\).
Any \(f \in C^\infty_c(G(F))\) gives a linear map
\begin{align*}
  \pi(f): V & \longrightarrow{} V \\
  v & \longmapsto{} \int_{G(F)} f(g) \pi(g) v \ dg
\end{align*}
and its image is contained in \(V^K\) for some compact open subgroup \(K\) of \(G(F)\).
In particular \(\pi(f)\) has finite range and we may consider \(\Theta_\pi(f) = \tr \pi(f)\).
The linear form \(\Theta_\pi: C^\infty_c(G(F)) \to \C\) is called the Harish-Chandra character of \(\pi\).
By a standard result in representation theory of associative algebras \cite[\S 20.6]{Bourbaki_alg8} the Harish-Chandra characters \(\Theta_\pi\) of the irreducible smooth representations of \(G(F)\) (up to isomorphism) are linearly independent, and the Harish-Chandra character of a smooth representation of finite length determines its Jordan-Hölder constituents and their multiplicities.

Denote by \(G_\rs\) the regular semi-simple locus in \(G\), an open dense subscheme.
Recall that \(G(F) \smallsetminus G_\rs(F)\) has measure zero.

\begin{theorem}[{\cite[Theorem 16.3]{HC_adm_inv_dist}}]
  Assume that \(F\) is a non-Archimedean local field of characteristic zero.
  Let \((V,\pi)\) be an irreducible smooth representation of \(G(F)\).
  There exists a unique element of \(L^1_\mathrm{loc}(G(F))\), also denoted \(\Theta_\pi\), such that for any \(f \in C^\infty_c(G(F))\) we have
  \[ \tr \pi(f) = \int_{G(F)} \Theta_\pi(g) f(g) dg. \]
  Moreover \(\Theta_\pi\) is represented by a unique locally constant function on \(G_\rs(F)\).
\end{theorem}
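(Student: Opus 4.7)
The plan is to show that the invariant distribution $\Theta_\pi$ on $G(F)$ defined by $f \mapsto \tr\pi(f)$ is represented by a locally integrable function, then upgrade this to local constancy on $G_\rs(F)$. The overall strategy, due to Harish-Chandra, is to localize the problem near a semisimple element and transfer the question to the Lie algebra.

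First I would record the formal properties of $\Theta_\pi$: it is invariant under conjugation by $G(F)$, and since the Bernstein center acts on $\pi$ by a character, $\Theta_\pi$ is annihilated by a cofinite ideal of $\mathcal{Z}(G(F))$, i.e.\ $\Theta_\pi$ is an \emph{admissible} invariant distribution in the sense of \cite{HC_adm_inv_dist}. Uniqueness of the function representing $\Theta_\pi$ (once it exists) is then automatic because $G_\rs(F)$ is open, dense, and $G(F) \smallsetminus G_\rs(F)$ has Haar measure zero.

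Next I would carry out Harish-Chandra's semisimple descent. Around a semisimple element $x \in G(F)$, writing $H = \Cent(x,G)^0$, a slice neighborhood of $x$ in $G(F)$ is analytically equivalent to $H(F) \times^{H(F)}$ (a small $\Ad(G(F))$-invariant neighborhood of $x$), and under this identification the restriction of $\Theta_\pi$ corresponds to an invariant distribution on $H(F)$ near $x$, still admissible with respect to $\mathcal{Z}(H(F))$. Using a Cayley transform (or a suitable $\ad$-adapted exponential, since $\operatorname{char} F = 0$), this in turn is transferred to an invariant admissible distribution on $\Lie H(F)$ in a neighborhood of $0$. This reduces the theorem to the analogous statement for invariant admissible distributions on a reductive Lie algebra near $0$.

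The hard step, where the real work lies, is then Lie-algebra-theoretic: show that any invariant admissible distribution $T$ on $\mathfrak{h}(F)$ is represented near $0$ by a locally integrable function that is locally constant on $\mathfrak{h}_\rs(F)$. The approach is to combine Howe's finiteness theorem (for any compact open $K \subset \mathfrak{h}(F)$ and any compact semisimple $\Omega$, the restrictions to $C^\infty_c(K)$ of orbital integrals over regular orbits meeting $\Omega$ span a finite-dimensional space) with the Shalika germ expansion: $T$ can be written, in a neighborhood of $0$, as a finite linear combination of Fourier transforms of nilpotent orbital integrals, up to a locally constant function on $\mathfrak{h}_\rs(F)$. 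Harish-Chandra's homogeneity and regularity results for Fourier transforms of orbital integrals (\cite[\S\S 4-5]{HC_adm_inv_dist}) then say precisely that each such term is locally $L^1$ and locally constant on $\mathfrak{h}_\rs(F)$. Transporting back via the descent of the previous paragraph yields the theorem; the main obstacle is really the Howe finiteness input, which is what makes the germ expansion finite and thus reduces an a priori infinite-dimensional problem to the previously understood case of nilpotent orbital integrals.
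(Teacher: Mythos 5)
The paper does not prove this theorem; it simply cites Harish-Chandra's monograph (as edited by DeBacker and Sally), so there is no ``paper's own proof'' to compare against. Your sketch does faithfully outline the structure of Harish-Chandra's argument: uniqueness from density and measure-zero complement of the regular set; semisimple descent to connected centralizers; transfer to the Lie algebra via an exponential-type map (where the characteristic-zero hypothesis is essential); and then the combination of Howe's finiteness theorem with the local integrability and homogeneity of Fourier transforms of nilpotent orbital integrals. Two minor imprecisions are worth flagging. First, what the Lie-algebra step actually delivers near $0$ is the \emph{local character expansion} (Theorem~16.2 loc.\ cit.): an exact equality, as distributions on a neighbourhood of $0$ in $\mathfrak{h}(F)$, between the descended distribution and a finite $\C$-linear combination of Fourier transforms of nilpotent orbital integrals --- not an equality ``up to a locally constant function'', and it is not quite the Shalika germ expansion (which concerns orbital integrals rather than characters, though the two are tied together via Howe finiteness). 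Second, the ``admissibility'' required of the invariant distribution is Harish-Chandra's precise technical condition (a joint finiteness involving the Hecke algebra and the Bernstein center after localization), which $\Theta_\pi$ satisfies because $\pi$ is irreducible and admissible, as you note. With these adjustments the plan is the standard one and is correct.
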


To our knowledge this result is unfortunately not known in full generality in positive characteristic, but see  \cite{CluckersGordonHalupczok}.
Harish-Chandra characters behave well with respect to parabolic induction \cite{vanDijk_ind} and Jacquet functors \cite{Casselman_charJac}.

See \cite[Chapter 8]{Wallach_realred1} for the Archimedean case.

\section{Langlands dual groups}

We recall the definition of Langlands dual groups.
We refer to \cite[\S I.2]{Borel_autLfunc} for details not recalled below.
In this section \(F\) could be any field, \(\ol{F}\) is a separable closure of \(F\) and we denote \(\Gamma = \Gal(\ol{F}/F)\).

\subsection{Based root data}

Let \(G\) be a connected reductive group over \(F\).
There exists a finite separable extension \(E/F\) such that \(G_E\) admits a Killing pair (also called Borel pair) \((B,T)\) \cite[Exposé XXII Corollaire 2.4 and Proposition 5.5.1]{SGA3_III}.
We may and do assume that \(E/F\) is a subextension of \(\ol{F}/F\).
Associated to \((G_E, B, T)\) we have a based (reduced) root datum \((X,R,R^\vee,\Delta)\) where \(X\) is the group of characters of \(T\), \(R \subset X\) the set of roots of \(T\) in \(G_E\), \(R^\vee\) the set of coroots (a subset of \(X^\vee = \Hom(X,\Z)\), the group of cocharacters of \(T\)) and \(\Delta \subset R\) the set of simple roots corresponding to \(B\)\footnote{Strictly speaking we should also include in the datum the bijection \(R \to R^\vee\) as in \cite[Exposé XXI]{SGA3_III}, or include the orthogonal of \(R^\vee\) in \(X\) as in \cite[\S 2.1]{BorelTits_gpesred}.}.
The group \(G(E)\) acts (by conjugation) transitively on the set of Killing pairs in \(G_E\) (Exposé XXVI Corollaire 5.7 (ii) and Corollaire 1.8 loc.\ cit.) and the (scheme-theoretic) stabilizer of \((B,T)\) is \(T\) (Exposé XXII Cor 5.3.12 and Proposition 5.6.1 loc.\ cit.), which centralizes \(T\).
It follows that other choices of Killing pair in \(G_E\) yield based root data \emph{canonically} isomorphic\footnote{An isomorphism between two based root data \((X_1,R_1,R_1^\vee,\Delta_1)\) and \((X_2,R_2,R_2^\vee,\Delta_2)\) is an isomorphism of abelian groups \(X_1 \simeq X_2\) identifying \(R_1\) to \(R_2\), \(R_1^\vee\) to \(R_2^\vee\) and \(\Delta_1\) to \(\Delta_2\), and compatible with the bijections \(R_i \to R_i^\vee\).} to \((X,R,R^\vee,\Delta)\), and so do other choices for \(E\).

We also obtain a continuous action of \(\Gamma\) on this based root datum, that we now recall.
The group \(\Gal(E/F)\) acts on the set of closed subgroups of \(G_E\): if \(G = \Spec A\) for a Hopf algebra \(A\) over \(F\) and a closed subgroup \(H\) corresponds to an ideal \(I\) of \(A \otimes_F E\), then for \(\sigma \in \Gal(E/F)\) we let \(\sigma(H)\) be the closed subgroup corresponding to \(\sigma(I)\).
In particular we have \(\sigma(H)(E) = \sigma(H(E))\) as subgroups of \(G(E)\).
If \(K = \Spec B\) is a linear algebraic group over \(F\) and \(\lambda: H \to K_E\) is a morphism, dual to a morphism of Hopf algebras \(\lambda^\sharp: B \otimes_F E \to (A \otimes_F E)/I\), define \(\sigma(\lambda): \sigma(H) \to K_E\) as dual to
\[ \sigma \circ \lambda^\sharp \circ \sigma^{-1}: B \otimes_F E \longrightarrow (A \otimes_F E)/\sigma(I). \]
Now for \(\sigma \in \Gal(E/F)\) there is a unique \(T(E) g_\sigma \in T(E) \backslash G(E)\) such that we have \(\sigma(B,T) = \Ad(g_{\sigma}^{-1})(B,T)\), and we get a well-defined isomorphism \(\Ad(g_\sigma): \sigma(T) \simeq T\).
We obtain an action of \(\Gamma\) on \(X = X^*(T)\) such that \(\sigma \in \Gal(E/F)\) maps \(\lambda: T \to \GL_{1,E}\) to \(\sigma(\lambda) \circ \Ad(g_\sigma)^{-1}\).
It is straightforward to check that this action preserves \(R\) and \(\Delta\) and that the dual action on \(X^\vee\) preserves \(R^\vee\).
It is routine to check that if we choose another triple \((E',B',T')\) instead of \((E,B,T)\) to obtain a based root datum \((X',R',R^{',\vee},\Delta')\) with continuous action of \(\Gamma\), the canonical isomorphism between \((X,R,R^\vee,\Delta)\) and \((X',R',R^{',\vee},\Delta')\) is \(\Gamma\)-equivariant.
We denote by \(\brd_F\) the\footnote{Defining this functor entails choosing a triple \((E,B,T)\) for each connected reductive group \(G\) over \(F\), but of course other choices would yield a canonically isomorphic functor.} resulting functor from the groupoid of connected reductive groups over \(F\) to the groupoid of based root data with continuous action of \(\Gamma\).

\begin{definition}
  Let \(G\) be a connected reductive group over \(F\).
  Define a groupoid \(\mathcal{IT}(G)\) as follows.
  \begin{itemize}
  \item The objects of \(\mathcal{IT}(G)\) are the inner twists of \(G\), i.e.\ pairs \((G', \psi)\) consisting of a connected reductive group \(G'\) over \(F\) and an isomorphism \(\psi: G_{\ol{F}} \simeq G_{\ol{F}}'\) such that for any \(\sigma \in \Gamma\) the automorphism \(\psi^{-1} \sigma(\psi)\) of \(G_{\ol{F}}\) is inner.
  \item A morphism between two inner twists \((G_1, \psi_1)\) and \((G_2, \psi_2)\) of \(G\) is an element \(g \in G_\ad(\ol{F})\) such that for any \(\sigma \in \Gamma\) we have
    \begin{equation} \label{eq:morphism_inner_twists}
      \psi_2^{-1} \sigma(\psi_2) = \Ad(g) \psi_1^{-1} \sigma(\psi_1) \Ad(\sigma(g))^{-1}.
    \end{equation}
  \end{itemize}
\end{definition}

\begin{remark}
  \noindent\par
  \begin{enumerate}
  \item One can check that any inner twist \(\psi: G_{\ol{F}} \to G'_{\ol{F}}\) yields a canonical isomorphism \(\brd_F(G) \simeq \brd_F(G')\).
  \item For an inner twist \(\psi: G_{\ol{F}} \to G'_{\ol{F}}\) the map
    \begin{align*}
      \Gamma & \longrightarrow{} G_\ad(\ol{F}) \\
      \sigma & \longmapsto{} \psi^{-1} \sigma(\psi)
    \end{align*}
    is a \(1\)-cocycle, i.e.\ an element of \(Z^1_\cont(\Gamma, G_\ad) = Z^1(F, G_\ad)\).
  \item The relations \eqref{eq:morphism_inner_twists} imply that the isomorphism
    \[ \psi_2 \Ad(g) \psi_1^{-1}: G_{1,\ol{F}} \longrightarrow{} G_{2,\ol{F}} \]
    is defined over \(F\), i.e.\ descends to an isomorphism \(G_1 \simeq G_2\).
  \item For an inner twist \((G',\psi)\) of \(G\) we have an isomorphism
    \begin{align*}
      \Aut_{\mathcal{IT}(G)}(G',\psi) & \longrightarrow{} G'_\ad(F) \\
      g & \longmapsto{} \psi(g).
    \end{align*}
  \end{enumerate}
\end{remark}
  
\begin{proposition} \label{pro:inner_forms_brd}
  Let \(b\) be a based root datum with continuous action of \(\Gamma\).
  Let \(\mathcal{CRG}_b\) be the groupoid of pairs \((G, \alpha)\) where \(G\) is a connected reductive group over \(F\) and \(\alpha: b \simeq \brd_F(G)\) is an isomorphism of based root data with action of \(\Gamma\), with obvious morphisms.
  (In other words \(\mathcal{CRG}_b\) is the groupoid fiber of \(b\) for \(\brd_F\).)
  \begin{enumerate}
  \item There exists an object \((G^*, \alpha^*)\) of \(\mathcal{CRG}_b\) such that \(G^*\) is quasi-split.
    Two such objects are isomorphic.
  \item Any object \((G, \alpha)\) of \(\mathcal{CRG}_b\) yields equivalences of groupoids
    \[ Z^1(F, G_\ad) \xleftarrow{\sim} \mathcal{IT}(G) \xrightarrow{\sim} \mathcal{CRG}_b. \]
    This gives in particular a bijection between \(H^1(F, G_\ad)\) and the set of isomorphism classes in \(\mathcal{CRG}_b\).
  \end{enumerate}
\end{proposition}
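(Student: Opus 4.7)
For part (1), my plan is to build the quasi-split form from a split Chevalley form. Forgetting the \(\Gamma\)-action on \(b\), the existence theorem of Chevalley--Demazure (\cite[Exposé XXV]{SGA3_III}) produces a split connected reductive group \(G_0\) over \(F\) together with a pinning \((B_0, T_0, \{u_\alpha\}_{\alpha \in \Delta})\) identifying \(\brd_F(G_0)\) with the underlying based root datum of \(b\). The pinning furnishes a splitting of the exact sequence \(1 \to G_{0,\ad} \to \Aut(G_0) \to \Out(G_0) \to 1\) and an identification \(\Out(G_0) = \Aut(b)\); composing the given continuous map \(\Gamma \to \Aut(b)\) with this splitting produces an action of \(\Gamma\) on \(G_{0,\ol{F}}\) by algebraic automorphisms, which we use to twist the standard descent datum to obtain a quasi-split \(F\)-form \(G^*\). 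The pinning descends, giving the required \(\alpha^*\). For uniqueness, once part (2) is established another quasi-split pair \((G^{**}, \alpha^{**})\) is an inner form of \(G^*\), and I would appeal to the standard fact that the fibre over the trivial class under the map \(H^1(F, B^*_\ad) \to H^1(F, G^*_\ad)\) surjects onto the set of quasi-split inner forms, forcing \(G^{**} \simeq G^*\).

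For part (2), I would construct the two functors and then verify each is an equivalence by unwinding definitions. The functor \(F_1 : \mathcal{IT}(G) \to Z^1(F, G_\ad)\) sends \((G', \psi)\) to the cocycle \(\sigma \mapsto \psi^{-1} \sigma(\psi)\) (viewing \(Z^1\) as a groupoid whose morphisms \(c_1 \to c_2\) are elements \(g \in G_\ad(\ol{F})\) with \(c_2(\sigma) = g^{-1} c_1(\sigma) \sigma(g)\)), and acts on morphisms by \(g \mapsto g\); relation \eqref{eq:morphism_inner_twists} is \emph{exactly} the cochain relation, giving full faithfulness. Essential surjectivity is obtained by twisting: given \(c \in Z^1(F, G_\ad)\), modify the descent datum on \(G_{\ol{F}}\) by \(\sigma \mapsto \Ad(c_\sigma) \circ \sigma\), descend to an \(F\)-form \({}_c G\), and take \(\psi = \id\) as an \(\ol{F}\)-isomorphism.

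The functor \(F_2 : \mathcal{IT}(G) \to \mathcal{CRG}_b\) sends \((G', \psi)\) to \((G', \brd_F(\psi) \circ \alpha)\), where \(\brd_F(\psi)\) is the canonical isomorphism \(\brd_F(G) \simeq \brd_F(G')\) induced by \(\psi\); this is \(\Gamma\)-equivariant precisely because \(\psi^{-1}\sigma(\psi)\) is inner and inner automorphisms act trivially on the canonical based root datum (this was the content of the first item of the preceding remark). Full faithfulness follows from remark (3): a morphism in \(\mathcal{CRG}_b\) is an \(F\)-isomorphism \(G_1 \simeq G_2\) compatible with the \(\alpha_i\), and such a datum is exhibited by an element \(g \in G_\ad(\ol{F})\) satisfying \eqref{eq:morphism_inner_twists}. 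For essential surjectivity, given \((G', \alpha')\), I would use the isomorphism theorem for split reductive groups (\cite[Exposé XXIII]{SGA3_III}) to lift the based root datum isomorphism \(\alpha' \circ \alpha^{-1} : \brd_F(G) \simeq \brd_F(G')\) to an \(\ol{F}\)-isomorphism \(\psi : G_{\ol{F}} \simeq G'_{\ol{F}}\); then \(\brd_F(\psi) = \alpha' \circ \alpha^{-1}\) is \(\Gamma\)-equivariant by hypothesis, so \(\brd_F(\psi^{-1}\sigma(\psi)) = \id\) for every \(\sigma\), which forces \(\psi^{-1}\sigma(\psi)\) to be inner and thus \((G', \psi) \in \mathcal{IT}(G)\).

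The main obstacle is the essential surjectivity of \(F_2\): it rests on the isomorphism theorem lifting an equivariant map of based root data to a group isomorphism, which is the non-formal input from SGA3. The only other non-routine point is the uniqueness of the quasi-split form in part (1), for which I rely on a known but non-trivial cohomological statement. Once the equivalence \(F_2\) is in hand, the bijection between \(H^1(F, G_\ad)\) and the set of isomorphism classes in \(\mathcal{CRG}_b\) is simply the induced map on \(\pi_0\).
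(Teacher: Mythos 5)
Your proof takes a genuinely different route from the paper's: the paper simply cites \cite[Exposé XXIV Théorème 3.11 and 3.9.1]{SGA3_III}, whereas you rederive the statement from the Chevalley--Demazure existence theorem, the pinning-induced splitting of $\Aut(G_0) \to \Out(G_0)$, Galois twisting, and the isomorphism theorem for split reductive groups. This is a reasonable and instructive alternative, and your treatment of part~(2) --- the construction of $F_1$ and $F_2$, full faithfulness via \eqref{eq:morphism_inner_twists}, essential surjectivity of $F_1$ by twisting the descent datum and of $F_2$ via the isomorphism theorem applied over $\ol{F}$ --- is correct (up to an inessential sign convention on the morphisms in the cocycle groupoid).

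The one genuine gap is the uniqueness assertion in part~(1). The sentence ``the fibre over the trivial class under the map $H^1(F, B^*_\ad) \to H^1(F, G^*_\ad)$ surjects onto the set of quasi-split inner forms'' does not parse: that fibre is a subset of $H^1(F, B^*_\ad)$, and there is no map from it to inner forms. More importantly, whatever version of this statement you intend, the argument cannot close without the key vanishing $H^1(F, B^*_\ad) = 0$, which you never invoke. The correct argument is: (i) if $c \in Z^1(F, G^*_\ad)$ is such that ${}_c G^*$ is quasi-split, then conjugating $\psi$ so as to carry $B^*_{\ol{F}}$ to an $F$-rational Borel of ${}_c G^*$ replaces $c$ by a cohomologous cocycle valued in $B^*_\ad(\ol{F})$; and (ii) $H^1(F, B^*_\ad) = 0$, because $U^*_\ad$ is a successive extension of copies of $\mathbb{G}_a$ so $H^1(F, U^*_\ad) = 0$, and the quotient $T^*_\ad = B^*_\ad/U^*_\ad$ has $X^*(T^*_\ad) = \Z \Delta$ with $\Gamma$ permuting $\Delta$, so $T^*_\ad$ is an induced torus and $H^1(F, T^*_\ad) = 0$ by Shapiro plus Hilbert 90. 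This vanishing is precisely the nontrivial input (the content of SGA3 XXIV 3.9.1 that the paper cites) and needs to be stated.
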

\begin{proof}
  This is a reformulation of \cite[Exposé XXIV Théorème 3.11]{SGA3_III} in the case where the base is the spectrum of a field, also using 3.9.1 loc.\ cit.\ to prove uniqueness in the first point.
\end{proof}

To sum up, we can ``classify'' connected reductive groups over \(F\) as follows:
\begin{itemize}
\item fix a representative in each isomorphism class of based root datum with continuous action of \(\Gamma\),
\item for each such representative \(b\), fix a quasi-split connected reductive group \(G^*\) over \(F\) together with an isomorphism \(\brd_F(G^*) \simeq b\),
\item for each element of \(H^1(F, G^*_\ad)\) choose an inner twist \((G, \psi)\) of \(G^*\) representing it.
\end{itemize}
Up to isomorphism each connected reductive group \(G\) over \(F\) arises in this way.
It can happen that an isomorphism class of connected reductive groups arises more than once, because \(H^1(F, G_\ad) \to H^1(F, \Aut(G))\) is not injective in general.
Nevertheless for \(G\) quasi-split Proposition \ref{pro:inner_forms_brd} implies that the preimage of the trivial class in \(H^1(F, \Aut(G))\) is a singleton.

\subsection{Langlands dual groups} \label{sec:Lgroups}

Let \(C\) be an algebraically closed field of characteristic zero.
Let \(G\) be a connected reductive group over \(F\) and let \(\brd_F(G) = (X,R,R^\vee,\Delta)\) be its associated based root datum endowed with a continuous action of \(\Gamma\).
Let \((\Ghat, \mathcal{B}, \mathcal{T}, (X_\alpha)_{\alpha \in \Delta^\vee})\) be the pinned connected reductive group over \(C\) with associated based root datum \((X^\vee,R^\vee,R,\Delta^\vee)\), i.e.\ the \emph{dual} of \(\brd_F(G)\) (ignoring the action of \(\Gamma\) for now).
The choice of a pinning induces a splitting of the extension
\[ 1 \rightarrow \Ghat_{\ad} \rightarrow \Aut (\Ghat) \rightarrow  \Out(\Ghat) \rightarrow 1 \]
because the subgroup \(\Aut(\Ghat, \mathcal{B}, \mathcal{T}, (X_{\alpha})_{\alpha \in \Delta^{\vee}})\) of \(\Aut(\Ghat)\) maps bijectively onto \(\Out(\Ghat)\) \cite[Exposé XXIV Théorème 1.3]{SGA3_III}.
As explained loc.\ cit.\ we also have an isomorphism
\[ \Out(\Ghat) \simeq \Aut(X^\vee, R^\vee, R, \Delta^\vee) \simeq \Aut(X, R, R^\vee, \Delta) \]
and so we have an action of \(\Gamma\) on \(\Ghat\) (preserving the pinning and factoring through a finite Galois group).
Denote \({}^L G = \Ghat \rtimes \Gamma\) the Langlands dual group, also called L-group.
It is sometimes useful (or just convenient) to replace \(\Gamma\) by a finite Galois group or by the Weil group in this semi-direct product.

One can give a more pedantic definition of Langlands dual groups in order to avoid the inelegant choice of pinning.
Namely, define an L-group for \(G\) as an extension \({}^L G\) of \(\Gamma\) by \(\Ghat\), where \(\Ghat\) is a split connected reductive group endowed with an isomorphism of its based root datum with the dual of that of \(G\), such that the induced morphism \(\Gamma \rightarrow \Out(\Ghat)\) is as above, and endowed with a \(\Ghat\)-conjugacy class of splittings \(\Gamma \rightarrow {}^L G\), called distinguished splittings, such that any (equivalently, one) of these splittings \(s\) preserves a pinning of \(\Ghat\).
It is not necessary to specify the pinning, since for a distinguished splitting \(s\) we have that \(\Ghat^{s(\Gamma)}\) acts transitively on the set of such pinnings: see \cite[Corollary 1.7]{Kottwitz_STFcusptemp}.
In the other direction, for a pinning of \(\Ghat\) fixed by a distinguished splitting \(s\), the set of distinguished splittings fixing this pinning is parametrized by
\begin{equation} \label{eq:param_splittings_pinning}
  \ker \left( Z^1(\Gamma, Z(\Ghat)) \to H^1(\Gamma, s, \Ghat) \right)
\end{equation}
where the notation \(H^1(\Gamma, s, \Ghat)\) means the first cohomology set for the action of \(\Gamma\) on \(\Ghat\) via \(s\) and conjugation in \({}^L G\).
Note that all distinguished splittings induce the same action of \(\Gamma\) on \(Z(\Ghat)\).
By Lemma 1.6 loc.\ cit.\ the kernel \eqref{eq:param_splittings_pinning} is simply the group of coboundaries \(B^1(\Gamma, Z(\Ghat))\), and so the distinguished splittings fixing a given pinning of \(\Ghat\) form a single conjugacy class by \(Z(\Ghat)\).

By Proposition \ref{pro:inner_forms_brd} for two connected reductive groups \(G_1\) and \(G_2\) their Langlands dual groups \({}^L G_1\) and \({}^L G_2\) are isomorphic as extensions of \(\Gamma\) if and only if \(G_1\) and \(G_2\) are inner forms of each other, and in this case they are even isomorphic as extensions endowed with conjugacy classes of distinguished splittings.

The construction of the Langlands dual group is not functorial for arbitrary morphisms between connected reductive groups, however in the following cases functoriality is straightforward.
\begin{itemize}
\item Let \(G\) be a quasi-split connected reductive group and \((B,T)\) a Borel pair (defined over \(F\)).
  Choose a distinguished splitting \(s_G: \Gamma \to {}^L G\) preserving a pinning \((\mathcal{B}, \mathcal{T}, (X_\alpha)_\alpha)\) of \(\Ghat\) and a distinguished splitting \(s_T: \Gamma \to {}^L T\).
  Then the canonical isomorphism \(\That \simeq \mathcal{T}\) extends to an embedding \({}^L T \hookrightarrow {}^L G\) whose composition with \(s_T\) is \(s_G\).
\item For \(G = G_1 \times_F G_2\) we can identify \({}^L G\) with \({}^L G_1 \times_\Gamma {}^L G_2\).
\item A central isogeny \cite[Exposé XXII Définition 4.2.9]{SGA3_III} \(G \rightarrow H\) induces a surjective morphism with finite kernel \({}^L H \rightarrow {}^L G\).
  More generally one can associate to a morphism \(G \to H\) with central kernel, normal image and abelian cokernel a morphism \({}^L H \rightarrow {}^L G\) (reduce to the case of a central isogeny using the previous point).
\item There are weaker forms of functoriality.
  Let \(G\) be a connected reductive group and \(T\) a maximal torus of \(G\) defined over \(F\).
  Choose a Borel subgroup \(B\) of \(G_{\ol{F}}\) containing \(T_{\ol{F}}\) and a splitting \(s: \Gamma \to {}^L G\) preserving a pinning \((\mathcal{B}, \mathcal{T}, (X_\alpha)_\alpha)\) of \(\Ghat\).
  We have a canonical isomorphism \(\That \simeq \mathcal{T}\) (coming from the isomorphism between \(\brd_C(\Ghat)\) and \(\brd_F(G)^\vee\)), but the Galois actions differ by a \(1\)-cocycle taking values in the Weyl group.
  In general we don't have a canonical embedding \({}^L T \hookrightarrow {}^L G\) (see \cite[\S 2.6]{LanglandsShelstad} and \cite{Kaletha_Lemb_tori} however), but note that the induced embedding \(Z(\Ghat) \hookrightarrow \That\) is \(\Gamma\)-equivariant.
\end{itemize}

In the next section we recall how the first case generalizes to parabolic subgroups in arbitrary connected reductive groups.

\subsection{Parabolic subgroups and L-embeddings} \label{sec:parabolic}

A parabolic subgroup \(\mathcal{P}\) of \({}^L G\) is a closed subgroup mapping onto \(\Gamma\) and such that \(\mathcal{P}^0 := \mathcal{P} \cap \Ghat\) is a parabolic subgroup of \(\Ghat\).
The set of parabolic subgroups is clearly stable under conjugation by \(\Ghat\).
If \(\mathcal{P}\) is a parabolic subgroup of \({}^L G\) then \(\mathcal{P}\) is the normalizer of \(\mathcal{P}^0\) in \({}^L G\).

Choose a Killing pair \((\mathcal{B}, \mathcal{T})\) of \(\Ghat\).
Recall that a parabolic subgroup of \(\Ghat\) is conjugated to a unique one containing \(\mathcal{B}\), and that parabolic subgroups of \(\Ghat\) containing \(\mathcal{B}\) correspond bijectively to subsets of \(\Delta^{\vee}\) (or \(\Delta\), using the bijection \(\alpha \mapsto \alpha^\vee\)), by associating to \(\mathcal{P}^0\) the set of \(\alpha \in \Delta^{\vee}\) (seen as characters of \(\mathcal{T}\)) such that \(-\alpha\) is a root of \(\mathcal{T}\) in \(\mathcal{P}^0\).
Embed \(\mathcal{B}\) in a pinning \((\mathcal{B}, \mathcal{T}, (X_{\alpha})_{\alpha \in \Delta^{\vee}})\) of \(\Ghat\), and let \(s: \Gamma \to {}^L G\) be a distinguished section fixing this pinning.
Then \(\mathcal{B} s(\Gamma)\) is a (minimal) parabolic subgroup of \({}^L G\), and any parabolic subgroup of \({}^L G\) is conjugated under \(\Ghat\) to one containing \(\mathcal{B} s(\Gamma)\).
A parabolic subgroup \(\mathcal{P}^0\) of \(\Ghat\) containing \(\mathcal{B}\) is such that its normalizer \(\mathcal{P}\) in \({}^L G\) maps onto \(\Gamma\) (i.e.\ \(\mathcal{P}\) is a parabolic subgroup of \({}^L G\)) if and only if the corresponding subset of \(\Delta^\vee\) is stable under \(\Gamma\).
Therefore \(\Ghat\)-conjugacy classes of parabolic subgroups of \({}^L G\) also correspond bijectively to \(\Gamma\)-stable subsets of \(\Delta^{\vee}\).

Using the bijection between \(\Delta\) and \(\Delta^\vee\) we obtain a bijection between the set of \(\Gamma\)-stable \(G(\ol{F})\)-conjugacy classes of parabolic subgroups of \(G_{\ol{F}}\) and the set of \(\Ghat\)-conjugacy classes of parabolic subgroups of \({}^L G\).
The obvious map from the set of \(G(F)\)-conjugacy classes of parabolic subgroups of \(G\) to the set of \(\Gamma\)-stable \(G(\ol{F})\)-conjugacy classes of parabolic subgroups of \(G_{\ol{F}}\) is injective, and it is surjective if and only if \(G\) is quasi-split.

Recall from \cite[\S 3.4]{Borel_autLfunc} that if \(\mathcal{P}\) is a parabolic subgroup of \({}^L G\) and \(\mathcal{M}^0\) is a Levi factor of \(\mathcal{P}^0\) then the normalizer \(\mathcal{M}\) of \(\mathcal{M}^0\) in \(\mathcal{P}\) maps onto \(\Gamma\) and \(\mathcal{P}\) is the semi-direct product of its unipotent radical and \(\mathcal{M}\).
In this situation we say that \(\mathcal{M}\) is a Levi factor of \(\mathcal{P}\), and a Levi subgroup of \({}^L G\).
See Lemma 3.5 loc.\ cit.\ for another characterization of Levi subgroups.

Let \(P\) be a parabolic subgroup of \(G\).
Choose a distinguished splitting \(s: \Gamma \to {}^L G\) stabilizing a pinning \(\E = (\mathcal{B}, \mathcal{T}, (X_{\alpha})_{\alpha \in \Delta^{\vee}})\) of \(\Ghat\).
Let \(M=P/N\) be the reductive quotient of \(P\).
Taking Killing pairs inside \(P\) in the definition of \(\brd_F\) we obtain an isomorphism between \(\brd_F(M)\) and \((X, R_P, R^\vee_P, \Delta_P)\) where \(\Delta_P\) is the set of simple roots \(\alpha \in \Delta\) such that \(-\alpha\) also occurs in \(P\), \(R_P = R \cap \mathrm{span}(\Delta_P)\), \(\Delta_P^\vee = \{ \alpha^\vee \,|\, \alpha \in \Delta_P \}\) and \(R_P^\vee = R^\vee \cap \mathrm{span}(\Delta_P^\vee)\).
Let \(\E_M = (\mathcal{B}_M, \mathcal{T}_M, (Y_\alpha)_\alpha)\) be a pinning of \(\Mhat\) and \(s_M: \Gamma \to {}^L M\) a corresponding distinguished splitting.
These choices determine an embedding of extensions of \(\Gamma\)
\[ \iota[P, \E, s, \E_M, s_M]: {}^L M \longrightarrow {}^L G \]
characterized by the following properties.
\begin{itemize}
\item It maps \((\mathcal{B}_M, \mathcal{T}_M)\) to \((\mathcal{B}, \mathcal{T})\), and on \(\mathcal{T}_M\) it is the isomorphism \(\mathcal{T}_M \simeq \mathcal{T}\) induced by the above embedding \(\brd_F(M) \hookrightarrow \brd_F(G)\),
\item it maps \(\E_M\) to \(\E\), and
\item we have \(\iota[P, \E, s, \E_M, s_M] \circ s_M = s\).
\end{itemize}
The image of \(\iota[P, \E, s, \E_M, s_m]\) is clearly a Levi subgroup of \({}^L G\), and the subgroup of \({}^L G\) generated by this image and \(\mathcal{B}\) is the parabolic subgroup of \({}^L G\) containing \(\mathcal{B}\) corresponding to \(P\).
The formation of \(\iota[P, \E, s, \E_M, s_M]\) satisfies obvious equivariance properties with respect to conjugation by \(\Mhat\) and \(\Ghat\).
In particular we have an embedding \(\iota_P: {}^L M \to {}^L G\) well-defined up to conjugation by \(\Ghat\).

\begin{lemma} \label{lem:conj_Lemb_Levi}
  Let \(M\) be a Levi subgroup of \(G\).
  Let \(P\) and \(P'\) be parabolic subgroups of \(G\) admitting \(M\) as a Levi factor.
  Then \(\iota_P\) and \(\iota_{P'}\) are conjugate under \(\Ghat\).
\end{lemma}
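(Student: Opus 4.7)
The plan is to exhibit an explicit element of $\Ghat(C)$ conjugating $\iota_P$ into the $\Ghat$-conjugacy class of $\iota_{P'}$, constructed from a Weyl group element transported to the dual side. The key ingredient is the canonical $\Gamma$-equivariant identification of the relative Weyl groups $W(M,G) := N_G(M)/M$ and $W(\Mhat, \Ghat) := N_{\Ghat}(\Mhat)/\Mhat$, which stems from the duality between the root data of $(M \subset G)$ and $(\Mhat \subset \Ghat)$.

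First I would observe that $W(M,G)$ acts simply transitively on the set of parabolic subgroups of $G_{\ol{F}}$ admitting $M_{\ol{F}}$ as a Levi factor (transitivity is standard, and simple transitivity follows from $N_G(M) \cap P = M$, which in turn uses that distinct Levi complements of $P$ are not $N_G(M)$-conjugate within $P$). Consequently, there exists a unique $w \in W(M,G)(\ol{F})$ with $w \cdot P = P'$; since $P, P'$ are $F$-rational, simple transitivity forces $w$ to be $\Gamma$-fixed, so $w \in W(M,G)(F)$. Under the identification with $W(\Mhat, \Ghat)$ this $w$ becomes a $\Gamma$-fixed Weyl element, which lifts to some $\hat{n} \in N_{\Ghat}(\Mhat)(C)$ (not necessarily $\Gamma$-fixed, but the defect lies in $\Mhat$).

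I would then show that $\Ad(\hat{n}) \circ \iota_P$ and $\iota_{P'}$ lie in the same $\Ghat$-conjugacy class. Invoking the bijection recalled at the start of this subsection between $\Ghat$-conjugacy classes of parabolic subgroups of ${}^L G$ and $\Gamma$-stable $G(\ol{F})$-conjugacy classes of parabolic subgroups of $G_{\ol{F}}$, together with the compatibility of the Weyl-group actions on both sides, one sees that the images of the two embeddings are Levi factors of parabolic subgroups of ${}^L G$ in a common $\Ghat$-conjugacy class, namely the one attached to the shared $G(\ol{F})$-conjugacy class of $P$ and $P'$. After a further conjugation by an element of $\Ghat$, I can arrange that the images of $\Ad(\hat{n}) \circ \iota_P$ and $\iota_{P'}$ literally coincide as Levi subgroups of ${}^L G$.

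The last step is a rigidity argument. Both embeddings are instances of the construction $\iota[P'', \mathcal{E}, s, \mathcal{E}_M, s_M]$ (with $P'' = P'$ and possibly different auxiliary data), and the $\Mhat$-equivariance of that construction lets me arrange for both to preserve a common pinning of $\Mhat$ and distinguished splitting $s_M$ compatible with the restricted pinning and splitting on the common image Levi inside ${}^L G$. Since a pinned split reductive group has no nontrivial pinning-preserving automorphism, and since distinguished splittings through a fixed pinning form a single $Z(\Mhat)$-orbit by the discussion surrounding \eqref{eq:param_splittings_pinning}, the two embeddings must then coincide up to an inner automorphism by $Z(\Mhat) \subset \Mhat$. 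I expect this final rigidity bookkeeping---tracking how the pinning and splitting data are carried along by the chain of $\Ghat$-conjugations---to be the main obstacle.
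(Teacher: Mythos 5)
The foundation of your argument is false: \(W(M,G) := N_G(M)/M\) does \emph{not} act transitively on the set of parabolic subgroups of \(G_{\ol{F}}\) admitting \(M_{\ol{F}}\) as a Levi factor. Concretely, take \(G = \GL_3\) and \(M\) the block-diagonal Levi of type \((2,1)\). Any \(g \in N_G(M)\) must permute the two eigenspaces of \(Z(M)^0\) on the standard representation, which have different dimensions, so it fixes both and lies in \(M\); thus \(N_G(M) = M\) and \(W(M,G)\) is trivial. Yet there are two parabolics with Levi factor \(M\), namely the upper and lower block-triangular ones. In general the set of parabolics with Levi factor \(M\) is a torsor under the abstract Weyl group of the root system of \(A_M\) acting on \(\Lie G\), but that reflection group need not embed in \(N_G(M)/M\). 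Since the element that moves \(P\) to \(P'\) does not lie in \(N_G(M)(\ol{F})\), your \(w\), its \(\Gamma\)-fixedness, and the transfer to \(N_{\Ghat}(\Mhat)\) all fail to get off the ground; the subsequent rigidity argument has nothing to apply to.

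The paper's proof instead works with a maximal torus \(T \subset M_{\ol{F}}\) and the \emph{absolute} Weyl group \(W(T, G_{\ol{F}})\): fixing a Borel \(B_M \subset M_{\ol{F}}\) containing \(T\) yields two Borels \(B = B_M N\) and \(B' = B_M N'\) of \(G_{\ol{F}}\), and one takes the unique \(x \in W(T, G_{\ol{F}})\) with \(\Ad(x)(B,T) = (B',T)\). This \(x\) does \emph{not} in general normalize \(M\) (and indeed in the \(\GL_3\) example it doesn't), which is exactly why the images of \(\iota_P\) and \(\iota_{P'}\) can be distinct Levi subgroups of \({}^L G\) that get conjugated to one another. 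The element \(x\) is then transferred to \(W(\mathcal{T},\Ghat)\), lifted via the pinning-determined set-theoretic section, and one verifies the identity \(\Ad(w) \circ \iota[P',\ldots] = \iota[P,\ldots]\) directly on \(\mathcal{T}_M\), on the pinning of \(\Mhat\), and on the distinguished splitting \(s\). The \(\Gamma\)-invariance used in the last verification is not of \(x\) as an element of some \(F\)-group, but invariance under a twisted action \(\sigma \mapsto \Ad(g_\sigma)\sigma\), exploiting that \(N\) and \(N'\) are defined over \(F\). So the gap in your proposal is genuine and not merely cosmetic: the conjugating element must be taken in the absolute Weyl group, not in \(W(M,G)\).
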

This statement is contained in \cite[Lemma 2.5]{Langlands_class} but we give a self-contained proof.
\begin{proof}
  First we recall a general construction.
  Fix a pinning \(\E = (\mathcal{B}, \mathcal{T}, (X_\alpha)_\alpha)\) in \(\Ghat\) and a distinguished splitting \(s: \Gamma \to {}^L G\) fixing it.
  For a Killing pair \((B,T)\) in \(G_{\ol{F}}\) we denote by \(\gamma[(B,T), (\mathcal{B}, \mathcal{T})]\) the isomorphism \(X^*(\mathcal{T}) \simeq X_*(T)\).
  Considering Weyl groups inside automorphism groups of tori this also induces an isomorphism
  \[ \omega[(B,T),(\mathcal{B},\mathcal{T})]: W(T,G_{\ol{F}}) \simeq W(\mathcal{T},\Ghat) \]
  and \(\gamma[(B,T), (\mathcal{B}, \mathcal{T})]\) is equivariant for the Weyl group actions via this isomorphism.
  We have an action of \(\Gamma\) on \(W(T,G_{\ol{F}})\): for \(\sigma \in \Gamma\) let \(T(\ol{F}) g_\sigma \in T(\ol{F}) \backslash G(\ol{F})\) be the class for which \(\sigma(B,T) = \Ad(g_\sigma^{-1})(B,T)\), then \(x \mapsto \Ad(g_\sigma)(\sigma(x))\) induces an automorphism of \(W(T,G_{\ol{F}})\).
  One can check that the isomorphism \(\omega[(B,T),(\mathcal{B},\mathcal{T})]\) is \(\Gamma\)-equivariant for this action on \(W(T,G_{\ol{F}})\) and the action via \(s\) on \(W(\mathcal{T}, \Ghat)\).
  
  Fix \(\E\), \(s\), \(\E_M\) and \(s_M\) as above.
  Fix a Borel pair \((B_M, T)\) in \(M_{\ol{F}}\).
  This determines two Borel subgroups \(B\) and \(B'\) in \(G_{\ol{F}}\), characterized by the properties \(B \cap M_{\ol{F}} = B_M\) and \(N_{\ol{F}} \subset B\) and similarly for \(B'\).
  There is a unique \(x \in W(T, G_{\ol{F}})\) for which \(\Ad(x)(B,T) = (B',T)\).
  Let \(n: W(\mathcal{T}, \Ghat) \to N(\mathcal{T}, \Ghat)\) be the set-theoretic splitting determined by \(\E\) \cite[\S 9.3.3]{Springer_lag}.
  Denote \(w = n(\omega[(B,T),(\mathcal{B},\mathcal{T})](x))\).
  We claim that we have
  \begin{equation} \label{eq:conj_Lemb_Levi}
    \Ad(w) \circ \iota[P', \E, s, \E_M, s_M] = \iota[P, \E, s, \E_M, s_M].
  \end{equation}
  To simplify notation in the rest of the proof we abbreviate \(\iota = \iota[P, \E, s, \E_M, s_M]\) and \(\iota' = \iota[P', \E, s, \E_M, s_M]\).
  
  First we check that \(\iota\) and \(\Ad(w)^{-1} \circ \iota'\) coincide on \(\mathcal{T}_M\).
  We have \((B',T) = \Ad(x)(B,T)\) so if we also denote by \(\Ad(x)\) the induced automorphism of \(X_*(T)\) we have \(\Ad(x) \gamma[(B,T), (\mathcal{B}, \mathcal{T})] = \gamma[(B',T), (\mathcal{B}, \mathcal{T})]\).
  We obtain
  \[ \gamma[(B',T), (\mathcal{B}, \mathcal{T})] = \gamma[(B,T), (\mathcal{B}, \mathcal{T})] \circ \omega[(B,T),(\mathcal{B},\mathcal{T})](x). \]
  The isomorphism \(\iota|_{\mathcal{T}_M}: \mathcal{T}_M \simeq \mathcal{T}\) is dual to the isomorphism
  \[ \gamma[(B_M,T), (\mathcal{B}_M, \mathcal{T}_M)]^{-1} \circ \gamma[(B,T), (\mathcal{B},\mathcal{T})] : X^*(\mathcal{T}) \simeq X^*(\mathcal{T}_M). \]
  Similarly \(\iota'|_{\mathcal{T}_M}: \mathcal{T}_M \simeq \mathcal{T}\) is dual to the isomorphism
  \begin{align*}
    & \gamma[(B_M,T), (\mathcal{B}_M, \mathcal{T}_M)]^{-1} \circ \gamma[(B',T), (\mathcal{B},\mathcal{T})] \\
    =& \gamma[(B_M,T), (\mathcal{B}_M, \mathcal{T}_M)]^{-1} \circ \gamma[(B,T), (\mathcal{B},\mathcal{T})] \circ \omega[(B,T),(\mathcal{B},\mathcal{T})](x)
  \end{align*}
  and the equality
  \[ \iota'|_{\mathcal{T}_M} = \omega[(B,T),(\mathcal{B},\mathcal{T})](x)^{-1} \circ \iota|_{\mathcal{T}_M} = \Ad(w)^{-1} \iota|_{\mathcal{T}_M} \]
  follows.
  
  To check that the equality \eqref{eq:conj_Lemb_Levi} holds on \(\Mhat\) it is enough to check that we have \(\iota(Y_\alpha) = \Ad(w) \iota'(Y_\alpha)\) for any \(\alpha \in \Delta(\mathcal{T}_M, \mathcal{B}_M)\).
  We have
  \[ \iota(Y_\alpha) = X_\beta \text{ and } \iota'(Y_\alpha) = X_{\beta'} \]
  where
  \begin{align*}
    \beta &= \gamma[(B,T),(\mathcal{B},\mathcal{T})]^{-1} \gamma[(B_M,T),(\mathcal{B}_M,\mathcal{T}_M)](\alpha), \\
    \beta' &= \gamma[(B',T),(\mathcal{B},\mathcal{T})]^{-1} \gamma[(B_M,T),(\mathcal{B}_M,\mathcal{T}_M)](\alpha) \\
    &= w^{-1}(\beta)
  \end{align*}
  both belong to \(\Delta(\mathcal{T}, \mathcal{B})\).
  By \cite[Proposition 9.3.5]{Springer_lag} we have \(X_\beta = \Ad(w)(X_{\beta'})\).

  Finally we need to check \(\Ad(w) \circ s = s\), i.e.\ that \(w\) commutes with \(s(\Gamma)\).
  For \(\sigma \in \Gamma\) and \(y \in W(\mathcal{T}, \Ghat)\) we have \(s(\sigma) n(y) s(\sigma)^{-1} = n(\sigma(y))\) and so it is enough to check that \(w \mathcal{T} \in W(\mathcal{T}, \Ghat)\) is fixed by \(\Gamma\).
  For any \(\sigma \in \Gamma\) there exists \(g_\sigma \in M(\ol{F})\) such that \(\sigma(B_M,T) = \Ad(g_\sigma^{-1})(B_M,T)\) and this implies \(\sigma(B,T) = \Ad(g_\sigma^{-1})(B,T)\) and \(\sigma(B',T) = \Ad(g_\sigma^{-1})(B',T)\) because \(N\) and \(N'\) are both defined over \(F\).
  A simple computation shows that we have \(\Ad(g_\sigma)(\sigma(x)) = x\) in \(W(T,G_{\ol{F}})\), i.e.\ \(x\) is \(\Gamma\)-invariant.
\end{proof}

The lemma shows that for a Levi subgroup \(M\) of \(G\) we have an embedding \(\iota_M: {}^L M \to {}^L G\), well-defined up to conjugation by \(\Ghat\).
We call the image of such an embedding a \emph{\(G\)-relevant} Levi subgroup of \({}^L G\).

\section{Langlands parameters} \label{sec:langlands_param}

In this section \(F\) is a local field.

\subsection{Weil-Deligne groups} \label{sec:WD}

We briefly recall the definition of Weil-Deligne groups of local fields.
We refer the reader to \cite{Tate_ntbackground} for more details.

If \(F \simeq \C\) define \(\W_F = F^{\times}\).
If \(F \simeq \R\) define \(\W_F\) as the unique non-split central extension
\[ 1 \rightarrow \ol{F}^{\times} \rightarrow \W_F \rightarrow \Gal(\ol{F}/F) \rightarrow 1 \]
where \(\Gal(\ol{F}/F)\) acts on \(\ol{F}^{\times}\) in the natural way.
Explicitly, \(\W_F = \ol{F}^{\times} \sqcup j \ol{F}^{\times}\) with \(j^2 = -1\).

If \(F\) is a non-Archimedean local field, we have a short exact sequence of topological groups
\[ 1 \rightarrow I_F \rightarrow \Gal( \ol{F} / F) \rightarrow \Gal(\ol{k} / k) \rightarrow 1  \]
where \(k\) is the residue field of \(F\) and \(I_F\) is called the inertia subgroup of \(\Gal(\ol{F} / F)\).
Since \(k\) is finite, say of cardinality \(q\), \(\Gal(\ol{k} / k)\) is isomorphic to \(\widehat{\Z}\) and topologically generated by the Frobenius automorphism \(x \mapsto x^q\).
This automorphism generates a natural subgroup \(\Z\) of \(\Gal(\ol{k} / k)\), and the Weil group \(\W_F\) is defined as its preimage, a dense subgroup of \(\Gal(\ol{F} / F)\).
Instead of the induced topology, we endow \(\W_F\) with the topology making \(I_F\) an open subgroup, with its topology induced from that of \(\Gal(\ol{F}/F)\).

Recall that the Artin reciprocity map is an isomorphism \(\W_F^\ab \simeq F^\times\).
Composing with the norm \(||\cdot||: F^\times \to \R_{>0}\) we get a continuous morphism still denoted \(||\cdot||: \W_F \to \R_{>0}\).

For non-Archimedean \(F\), we now recall three possible definitions for the Weil-Deligne group.
\begin{enumerate}
	\item \(\W_F' := \mathbb{G}_a \rtimes \W_F\), where the additive group \(\mathbb{G}_a\) is defined over \(C\) and the action of \(\W_F\) on \(\mathbb{G}_a\) is by \(w(x) = ||w|| x\).
	\item \(\WD_F := \W_F \times \SL_2\), where the second factor is the algebraic group over \(\Q\).
	\item the (unnamed) locally compact topological group \(\W_F \times \mathrm{SU}(2)\), where \(\mathrm{SU}(2)\) is a maximal compact subgroup of \(\SL_2(\C)\).
\end{enumerate}

For Archimedean \(F\) it will be convenient to denote \(\WD_F = \W_F\).

\subsection{Langlands parameters}

First assume that \(F\) is non-Archimedean.

For the first version of the Weil-Deligne group, a Weil-Deligne Langlands parameter\footnote{This terminology is not standard.} is a pair \((\rho, N)\) such that
\begin{itemize}
\item \(\rho: \W_F \to {}^L G\) is a continuous representation, i.e.\ there exists an open subgroup \(U\) of \(I_F\) which acts trivially on \(\Ghat\) and is mapped to \(1 \times U \subset \Ghat \rtimes \Gamma\), such that the composition with the projection \({}^L G \to \Gamma\) is the usual map,
\item \(N \in \Lie \Ghat\) satisfies \(\rho(w) N \rho(w)^{-1} = ||w|| N\) for all \(w \in \W_F\) (this forces \(N\) to be nilpotent),
\item for any \(w \in \W_F\) (equivalently, for some \(w \in \W_F \smallsetminus \I_F\)) we have that \(\rho(w)\) is semi-simple.
\end{itemize}

\begin{remark} \label{rem:ss_via_linear_quot}
  For this last condition note that the group \({}^L G\) is not of finite type over \(C\) but there is a finite Galois subextension \(E/F\) of \(\ol{F}/F\) such that the action of \(\Gamma_E\) on \(\Ghat\) (via any distinguished splitting \(s\)) is trivial, and the quotient of \({}^L G\) by \(s(\Gamma_E)\) (which again does not depend on the choice of \(s\)) may be identified with a semi-direct product \(\Ghat \rtimes \Gal(E/F)\), which is of finite type.
  We say that an element \(x\) of \({}^L G\) is semi-simple if its image in \({}^L G / s(\Gamma_E)\) is semi-simple.
  This is equivalent to the existence of a distinguished splitting \(s\) stabilizing a pinning \(\E = (\mathcal{B}, \mathcal{T}, (X_{\alpha})_{\alpha \in \Delta^{\vee}})\) such that we have \(x = t s(\sigma)\) for some \(t \in \mathcal{T}\) and \(\sigma \in \Gamma\).
  The ``if'' direction is easy using Jordan decompositions \cite[\S I.4]{Borel_lag}, the ``only if'' direction is well-known (see \cite[\S 7]{Steinberg_endo} or \cite[Lemma 6.5]{Borel_autLfunc}).
\end{remark}

Weil-Deligne Langlands parameters correspond bijectively to morphisms \(\W_F' \to {}^L G\) which are algebraic on \(\mathbb{G}_a\) (given by \(x \mapsto \exp(xN)\)), continuous on the factor \(\W_F\) and compatible with the projection to \(\Gamma\) (given by \(\rho\)).
One of the motivations for using the first version \(\W_F'\) of the Weil-Deligne group, rather than the other two, is the \(\ell\)-adic monodromy theorem \cite[Theorem 4.2.1]{Tate_ntbackground}.
This roughly says that for a prime \(\ell\) not equal to the residual characteristic of \(F\) and for \(C = \Qellbar\)\footnote{One could work with a finite extension of \(\Qell\) instead.}, any continuous morphism \(\W_F \to {}^L G\) \emph{for the natural topology on \(\Ghat\)} compatible with \({}^L G \to \Gamma\) is given by a pair \((\rho, N)\) satisfying the first two conditions above.
Continuous \(\ell\)-adic Galois representations occur naturally in algebraic geometry (Tate modules of elliptic curves over \(F\), or more generally in the étale cohomology of varieties defined over \(F\)).
Another reason for preferring \(\W_F'\) is that this version requires fewer ``choices of a square root of \(q\)'' in the local Langlands correspondence, and is more obviously compatible with parabolic induction (property \eqref{it:crude_LL_infchar_nonA} in Conjecture \ref{conj:crude_LLC} below).

For the second version \(\WD_F\), over any algebraically closed field \(C\) of characteristic zero, Langlands parameters are defined as morphisms \(\phi: \W_F \times \SL_2(C) \to {}^L G\) which are compatible with \({}^L G \to \Gamma\), continuous and semi-simple on the first factor and algebraic on the second factor.

For the third version, we need to assume \(C = \C\) and we consider continuous (for the natural topology on \(\Ghat\)) semi-simple morphisms \(\phi: \W_F \times \mathrm{SU}(2) \to {}^L G\) which are compatible with \({}^L G \to \Gamma\).
By restriction via \(\mathrm{SU}(2) \subset \SL_2(\C)\) we obtain exactly the same morphisms as in the second version, essentially because \(\SL_2(\C)\) is the complexification of the compact Lie group \(\mathrm{SU}(2)\).

Recall that we have already chosen a square root of \(q\) in \(C\) in order to normalize parabolic induction.
We have a natural map from Langlands parameters to Weil-Deligne Langlands parameters:
\begin{equation} \label{eq:Langlands_to_WD}
  \phi \mapsto \left( \phi \circ \iota_W, \mathrm{d} \phi|_{\SL_2} \begin{pmatrix} 0 & 1 \\ 0 & 0\end{pmatrix} \right)
\end{equation}
where \(\iota_W(w) = (w, \operatorname{diag}(||w||^{1/2}, ||w||^{-1/2}))\).
By a refinement of the Jacobson-Morozov theorem (see \cite[Lemma 2.1]{GrossReeder_disc}) this induces a bijection between sets of \(\Ghat\)-conjugacy classes of parameters.

If \(F\) is Archimedean we assume \(C=\C\) and define Langlands parameters as semi-simple continuous morphisms \(\phi: \W_F \to {}^L G\) which are compatible with \({}^L G \to \Gamma\).

We will denote by \(\Phi(G)\) the set of \(\Ghat\)-conjugacy classes of Langlands parameters taking values in \({}^L G\).
As explained above all versions of the Weil-Deligne group give equivalent sets of \(\Ghat\)-conjugacy classes.
From now on we will only consider Langlands parameters, i.e.\ morphisms to \({}^L G\) using the second version \(\WD_F\).

\subsection{Reductions} \label{sec:parameters_reductions}

Assume \(C=\C\).
We briefly recall from \cite{SilbergerZink} the Langlands classification for parameters.

For an algebraic group \(H\) over \(\C\) recall from \cite[\S 5.1]{SilbergerZink} that any semi-simple \(x \in H(\C)\) admits a polar decomposition \(x = x_0 x_h\) characterized by the following properties:
\begin{itemize}
\item \(x_0 x_h = x_h x_0\),
\item \(x_0\) and \(x_h\) are semi-simple,
\item in any representation \(\rho\) of \(H\) the eigenvalues of \(\rho(x_0)\) (resp.\ \(\rho(x_h)\)) belong to the unit circle (resp.\ to \(\R_{>0}\)).
\end{itemize}
Uniqueness is clear using a faithful representation.
For a morphism \(f: H_1 \to H_2\) between linear algebraic groups over \(\C\) and a semi-simple \(x \in H_1(\C)\) it is also clear that if \(x = x_0 x_h\) is a polar decomposition then \(f(x) = f(x_0) f(x_h)\) is also a polar decomposition.
Since any semi-simple \(x \in H(\C)\) is contained in a diagonalizable subgroup of \(H\) \cite[Proposition 8.4]{Borel_lag} this reduces the proof of existence of polar decompositions to the case where \(H\) is diagonalizable, which is easy.
One could also show existence and uniqueness using the Tannakian formalism.

Now let \(\mathrm{cl}(\phi) \in \Phi(G)\).
Applying the polar decomposition\footnote{As in Remark \ref{rem:ss_via_linear_quot} we may work in the reductive group \({}^L G / s(\Gamma_E)\), for some large enough finite Galois extension \(E/F\), instead of \({}^L G\).} to \(\phi(w)\) for any \(w \in W_F\) with positive valuation, we obtain a canonical tuple \((\mathcal{P}, \mathcal{M}, \phi_0, \chi)\) giving a decomposition
\begin{equation} \label{eq:param_decomp}
  \phi = \phi_0 \chi
\end{equation}
subject to the following conditions.
\begin{itemize}
\item The subgroup \(\mathcal{P}\) of \({}^L G\) is a parabolic subgroup and \(\mathcal{M}\) is a Levi subgroup of \(\mathcal{P}\).
  We denote by \(\mathcal{N}\) the unipotent radical of \(\mathcal{P}\).
\item \(\phi_0\) is a Langlands parameter taking values in \(\mathcal{M}\) and bounded on \(\W_F\).
\item \(\chi \in Z^1(\W_F, X_*(Z(\mathcal{M})^0) \otimes_{\Z} \R_{>0})\) where \(X_*(Z(\mathcal{M})^0) \otimes_{\Z} \R_{>0}\) is seen as a subgroup of the torus \(X_*(Z(\mathcal{M})^0) \otimes_{\Z} \C^\times = Z(\mathcal{M})^0\).
\item The eigenvalues of \(\chi(\Frob)\) if \(F\) is non-Archimedean (resp.\ \(\chi(x)\) for any real \(x>1\) if \(F\) is Archimedean) on \(\Lie \mathcal{N}\) are all greater than \(1\).
\end{itemize}
This corresponds to the Langlands classification (Theorem \ref{thm:Langlands_classification}) using Lemma \ref{lem:Langlands_class_abs}.
This reduction explains why we are mainly interested in bounded parameters \(\phi\).
We will also call such parameters tempered.
A nice property of tempered (or more generally, essentially tempered, allowing twists by cocycles \(\W_F \to Z(\Ghat)\)) parameters is that the restriction of the map \eqref{eq:Langlands_to_WD}, associating Weil-Deligne parameters to Langlands parameters, to the set of tempered parameters is injective: see \cite[Corollary 3.16]{MeliImaiYoucis}.
This may also be proved by observing that for a tempered Langlands parameter \(\phi\) with associated Weil-Deligne Langlands parameter \((\rho,N)\) we can recover \(\phi|_{\W_F}\) and the semi-simple element in the \(\mathfrak{sl}_2\)-triple corresponding to \(\phi|_{\SL_2}\) from the polar decomposition \eqref{eq:param_decomp} of \(\rho\) using \cite[Corollary 3.5]{Kostant_sl2}.
In particular the centralizers in both versions coincide in the tempered case.
This is not true in general, see \cite[Example 3.8]{MeliImaiYoucis}.

The following proposition does not assume \(C=\C\).
\begin{proposition}[{\cite[Proposition 3.6]{Borel_autLfunc}}]
  Let \(\phi: \WD_F \to {}^L G\) be a Langlands parameter.
  The Levi subgroups of \({}^L G\) which are minimal among those containing \(\phi(\WD_F)\) are all conjugated under the centralizer of \(\phi\) in \(\Ghat\).
\end{proposition}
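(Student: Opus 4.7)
The plan is to reduce the proposition to the conjugacy of maximal tori in the identity component \(H^0\) of the centralizer \(H := \Cent_{\Ghat}(\phi)\). A preliminary step is to verify that \(H^0\) is reductive: the image \(\phi(\SL_2)\) is a reductive subgroup of \(\Ghat\), and each \(\phi(w)\) for \(w \in \W_F\) is semisimple in the finite-type quotient \(\Ghat \rtimes \Gal(E/F)\) from Remark~\ref{rem:ss_via_linear_quot}, so the standard theorems on centralizers of semisimple elements and of reductive subgroups in connected reductive groups apply.

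I would set up two maps. Given a subtorus \(S \subseteq H^0\), the centralizer \(\Cent_{\Ghat}(S)\) is a (connected) Levi subgroup of \(\Ghat\), and since every element of \(\phi(\WD_F)\) commutes with \(S\) it normalizes \(\Cent_{\Ghat}(S)\). The subgroup \(\mathcal{L}(S)\) of \({}^L G\) generated by \(\Cent_{\Ghat}(S)\) and \(\phi(\W_F)\) then satisfies \(\mathcal{L}(S) \cap \Ghat = \Cent_{\Ghat}(S)\), contains \(\phi(\WD_F)\), and maps onto \(\Gamma\) (its image in \(\Gamma\) contains the dense image of \(\phi(\W_F)\), and it suffices to work modulo a finite Galois extension over which the action on \(\Ghat\) factors). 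Thus \(\mathcal{L}(S)\) is a Levi subgroup of \({}^L G\) containing \(\phi(\WD_F)\). Conversely, given such a Levi \(\mathcal{M}\), the connected central torus \(Z(\mathcal{M}^0)^0\) is \(\mathcal{M}\)-stable with the action factoring through the finite group \(\mathcal{M}/\mathcal{M}^0\); I let \(S(\mathcal{M})\) denote the identity component of the subgroup of \(Z(\mathcal{M}^0)^0\) fixed pointwise by \(\phi(\WD_F)\), so that \(S(\mathcal{M}) \subseteq H^0\).

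The crux, and the main obstacle, is to show that these assignments restrict to mutually inverse bijections between the maximal tori of \(H^0\) and the minimal Levi subgroups of \({}^L G\) containing \(\phi(\WD_F)\). Concretely, if \(S\) is a maximal torus of \(H^0\) and \(\mathcal{M}' \subsetneq \mathcal{L}(S)\) is a strictly smaller Levi still containing \(\phi(\WD_F)\), then \(\mathcal{M}'^0\) is a proper Levi of \(\Cent_{\Ghat}(S) = \mathcal{L}(S)^0\), so \(Z(\mathcal{M}'^0)^0\) strictly contains \(Z(\Cent_{\Ghat}(S))^0 \supseteq S\); the task is to produce from \(Z(\mathcal{M}'^0)^0\) a subtorus of \(H^0\) strictly containing \(S\), contradicting maximality. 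The cleanest route is to apply the classical fact—that for a connected reductive group \(K\) and a subset \(X \subseteq K\), the minimal Levi subgroups of \(K\) containing \(X\) are conjugate under \(\Cent_K(X)^0\)—to \(K = \Cent_{\Ghat}(S)\), with an appropriate twisted Galois action induced by \(\phi\), thereby reducing the Galois-equivariant problem to the Galois-free classical one. A parallel argument in the other direction shows that if \(\mathcal{M}\) is minimal then \(S(\mathcal{M})\) is a maximal torus of \(H^0\) and \(\mathcal{M} = \mathcal{L}(S(\mathcal{M}))\).

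Once the bijection is in hand, the maximal tori of the connected reductive group \(H^0\) form a single \(H^0\)-orbit, and since \(\mathcal{L}(hSh^{-1}) = h\mathcal{L}(S)h^{-1}\) for \(h \in H^0 \subseteq \Cent_{\Ghat}(\phi)\), any two minimal Levi subgroups of \({}^L G\) containing \(\phi(\WD_F)\) are conjugate under an element of \(\Cent_{\Ghat}(\phi)\), as claimed.
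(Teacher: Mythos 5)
Your overall strategy is sound: matching maximal tori of \(H^0 := \Cent_{\Ghat}(\phi)^0\) with the minimal Levi subgroups of \({}^L G\) containing \(\phi(\WD_F)\), and then invoking conjugacy of maximal tori in \(H^0\), is indeed how the proposition is proved, and your two assignments \(S \mapsto \mathcal{L}(S)\) and \(\mathcal{M} \mapsto S(\mathcal{M})\) are the correct ones. However, the crux step is left hanging: the appeal to ``the classical fact, with an appropriate twisted Galois action'' is circular --- that fact applied to \(\Cent_{\Ghat}(S)\) with a twisted Galois action is just another instance of the very proposition being proved --- and in any case it is not visible how it produces the torus of \(H^0\) strictly containing \(S\) that your contradiction argument requires.

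What is missing is the characterization the paper points to as Borel's Lemma 3.5: for any Levi subgroup \(\mathcal{M}\) of \({}^L G\) one has \(\mathcal{M}^0 = \Cent_{\Ghat}(Z(\mathcal{M})^0)\), where \(Z(\mathcal{M})\) is the center of \(\mathcal{M}\). When \(\phi(\WD_F) \subseteq \mathcal{M}\) the torus \(Z(\mathcal{M})^0\) coincides with your \(S(\mathcal{M})\), since \(\phi(\W_F)\) surjects onto \(\mathcal{M}/\mathcal{M}^0\) and \(\phi(\SL_2) \subseteq \mathcal{M}^0\) acts trivially on \(Z(\mathcal{M}^0)\). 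The lemma is proved by averaging: choose a cocharacter \(\lambda\) of \(Z(\mathcal{M}^0)^0\) whose dynamical parabolic and centralizer recover \((\mathcal{P}^0, \mathcal{M}^0)\), then average over the finite group \(\mathcal{M}/\mathcal{M}^0\); the average stays in the open cone corresponding to \((\mathcal{P}^0,\mathcal{M}^0)\) (by convexity) and lies in \(X_*(Z(\mathcal{M})^0)\otimes\mathbb{Q}\), so after clearing denominators it still has centralizer exactly \(\mathcal{M}^0\). With this in hand your contradiction closes immediately: if \(S\) is a maximal torus of \(H^0\) and \(\mathcal{M}' \subsetneq \mathcal{L}(S)\) is a Levi of \({}^L G\) containing \(\phi(\WD_F)\), then \(S \subseteq S(\mathcal{M}') \subseteq H^0\), maximality forces \(S = S(\mathcal{M}')\), and the lemma gives \(\mathcal{M}'^0 = \Cent_{\Ghat}(S(\mathcal{M}')) = \Cent_{\Ghat}(S) = \mathcal{L}(S)^0\), contradicting properness; the reverse direction is analogous. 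Once this lemma is supplied the remaining steps of your write-up go through.
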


This proposition may be seen as a generalization of the isotypical decomposition of a semi-simple linear group representation.
A Langlands parameter \(\phi\) is called \emph{essentially discrete} if this Levi subgroup is \({}^L G\), i.e.\ if \(\phi\) is ``\({}^L G\)-irreducible''.
This condition is equivalent to \(\Cent(\phi, \Ghat)/Z(\Ghat)^\Gamma\) being finite.
A Langlands parameter \(\phi\) is called \emph{\(G\)-relevant} if this Levi subgroup is \(G\)-relevant (see Section \ref{sec:parabolic}).

\subsection{Weil restriction} \label{sec:Weil_res_param}

Let \(E\) be  a finite subextension \(E\) of \(\ol{F}/F\) and let \(\Gamma_E=\Gal(\ol{F}/E)\) be the corresponding open subgroup of \(\Gamma\).
Let \(G_0\) be a connected reductive group \(G_0\) over \(E\).
Let \(G=\Res_{E/F} G_0\) be the Weil restriction, a connected reductive group over \(F\) such that the topological groups \(G(F)\) and \(G_0(E)\) are isomorphic.
Recall from \cite[\S 5]{Borel_autLfunc} that we may identify \(\Ghat\) endowed with its action of \(\Gamma\) with the induction from \(\Gamma_E\) to \(\Gamma\) of \(\widehat{G_0}\).
By Shapiro's lemma we have a bijection \(\Phi(G) \simeq \Phi(G_0)\).

\section{The local Langlands conjecture}

\subsection{Crude local Langlands correspondence}

Denote by \(\Pi(G)\) be the set of isomorphism classes of irreducible admissible representations of \(G(F)\) over \(\C\) (in the Archimedean case, \((\mathfrak{g},K)\)-modules).

\begin{conjecture} \label{conj:crude_LLC}
  There should exist maps \(\LL : \Pi(G) \to \Phi(G)\) for all connected reductive groups \(G\) over \(F\), satisfying the following properties.
  Denote \(\Pi_\phi(G) = \LL^{-1}(\mathrm{cl}(\phi))\).
  \begin{enumerate}
  \item \label{it:crude_LL_torus}
    If \(G\) is a torus then \(\LL\) should be the bijection that Langlands deduced from class field theory \cite[\S 9]{Borel_autLfunc}.
  \item \label{it:crude_LL_img}
    For any \(G\) all fibers of \(\LL\) should be finite and the image of \(\LL\) should contain all essentially discrete parameters.
  \item \label{it:crude_LL_prod}
    If \(G = G_1 \times G_2\) then, using the identification of \({}^L G\) with \({}^L G_1 \times_\Gamma {}^L G_2\), for any irreducible admissible representation \(\pi \simeq \pi_1 \otimes \pi_2\) of \(G(F)\) we should have \(\LL(\pi) = (\LL(\pi_1), \LL(\pi_2))\).
  \item \label{it:crude_LL_isog}
    If \(\theta: G \rightarrow H\) is a central isogeny with dual \(\widehat{\theta}: {}^L H \rightarrow {}^L G\) then for \(\pi \in \Pi(H)\) and any constituent \(\pi'\) of the restriction \(\pi|_{G(F)}\) (which is semi-simple of finite length) we should have \(\LL(\pi') = \widehat{\theta} \circ \LL(\pi)\).
    (If this holds for all central isogenies then using \eqref{it:crude_LL_prod} one can deduce that it holds more generally for morphisms \(G \to H\) with central kernel, normal image and abelian cokernel.)
  \item \label{it:crude_LL_Weil_res}
    In the setup of Section \ref{sec:Weil_res_param} (Weil restriction) we should have a commutative diagram
    \[
      \begin{tikzcd}
        \Pi(G) \arrow[d, "{\sim}"] \arrow[r, "{\LL}"] & \Phi(G) \arrow[d, "{\sim}"] \\
        \Pi(G_0) \arrow[r, "{\LL}"] & \Phi(G_0)
      \end{tikzcd}
    \]
    where the left vertical map is induced by the isomorphism \(G(F) \simeq G_0(E)\) and the right vertical map is Shapiro's lemma.
  \item \label{it:crude_LL_L2}
    For an irreducible smooth representation \(\pi\) of \(G(F)\) we should have that \(\pi\) is essentially square-integrable if and only if \(\LL(\pi)\) is essentially discrete.
  \item \label{it:crude_LL_temp}
    Let \(M\) be a Levi subgroup of \(G\).
    Recall from Lemma \ref{lem:conj_Lemb_Levi} that we have an embedding \(\iota_M: {}^L M \hookrightarrow {}^L G\), well-defined up to \(\Ghat\)-conjugacy.
    If \(\sigma\) is an irreducible smooth representation of \(M(F)\) which is essentially square-integrable and has unitary central character then for any constituent \(\pi\) of \(i_P^G \sigma\) we should have \(\LL(\pi) = \iota_M \circ \LL(\sigma)\).
  \item \label{it:crude_LL_Lclass}
    In the situation of Theorem \ref{thm:Langlands_classification} we should have
    \[ \LL(J(P,\sigma,\nu)) = \iota_P \circ \LL(\sigma \otimes \nu). \]
  \item \label{it:crude_LL_infchar}
    For Archimedean \(F\) the maps \(\LL\) should be compatible with infinitesimal characters in the following sense.
    Assume \(F \simeq \R\) (we may reduce to this case if \(F \simeq \C\) by \eqref{it:crude_LL_Weil_res} above) and choose an isomorphism \(\ol{F} \simeq \C\), so that we use the same field \(\C\) for the coefficients, to construct the Weil group of \(F\) and to define \(\mathfrak{g} = \C \otimes_F \Lie G\).
    Let \(\pi\) be an irreducible \((\mathfrak{g},K)\)-module.
    The restriction of \(\LL(\pi)\) to \(\C^\times\) is conjugated to a morphism of the form \(z \mapsto z^\lambda \ol{z}^\mu\) where \(\lambda, \mu \in X_*(\mathcal{T}) \otimes_\Z \C\) satisfy \(\lambda-\mu \in X_*(\mathcal{T})\) and \(z^\lambda \ol{z}^\mu\) is a suggestive notation for \((z\ol{z})^{(\lambda+\mu)/2} (z/|z|)^{\lambda-\mu}\).
    The infinitesimal character of \(\pi\) should be identified to \(\lambda\) by the Harish-Chandra isomorphism.
  \item \label{it:crude_LL_infchar_nonA}
    Assume that \(F\) is non-Archimedean.
    If \(P=MN\) is a parabolic subgroup of \(G\) and \(\sigma\) is an irreducible smooth representation of \(M(F)\), then for any irreducible subquotient \(\pi\) of \(i_P^G \sigma\) we should have \(\LL(\pi) \circ \iota_W = \iota_M \circ \LL(\sigma) \circ \iota_W\).
    Equivalently, the same but just for supercuspidal \(\sigma\).
\end{enumerate}
\end{conjecture}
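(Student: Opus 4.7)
The statement is a conjecture rather than a theorem, so no complete proof is known; what I would outline is a strategy assembling the known partial results and indicating what remains open. The overall architecture is a sequence of reductions: properties \eqref{it:crude_LL_prod}, \eqref{it:crude_LL_isog}, and \eqref{it:crude_LL_Weil_res} reduce the construction to the case of (essentially) absolutely simple groups, while \eqref{it:crude_LL_temp} and \eqref{it:crude_LL_Lclass} combined with the Langlands classification (Theorem \ref{thm:Langlands_classification}) and the parallel Langlands decomposition for parameters recalled in Section \ref{sec:parameters_reductions} reduce the definition of \(\LL\) to the case of tempered, and then essentially discrete, representations. So I would focus the construction on supercuspidal (or, more generally, discrete series) \(\pi\) and then propagate outward.

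The base cases are then the following. The torus case \eqref{it:crude_LL_torus} is handled by local class field theory through the identification \(\W_F^{\ab} \simeq F^\times\). For \(G = \GL_n\), the essentially discrete parameters are the irreducible Frobenius-semisimple Weil-Deligne representations, and the Bernstein--Zelevinsky classification further reduces to the supercuspidal case; these are handled globally by embedding a given supercuspidal as a local component of an automorphic representation and using the cohomology of Shimura varieties or Drinfeld shtukas (Harris--Taylor, Henniart, Scholze in the \(p\)-adic case; Laumon--Rapoport--Stuhler and Lafforgue in positive characteristic), with matching by \(L\)- and \(\epsilon\)-factors of pairs providing the characterization that verifies the required properties. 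For quasi-split classical groups, the correspondence can be transferred from \(\GL_n\) via twisted endoscopy and the stable trace formula, which is Arthur's program and its subsequent extensions by Mok and by Kaletha--Minguez--Shin--White; inner forms are then treated by Kaletha's refinements. For general \(G\), one constructs supercuspidal L-packets directly on the representation-theoretic side via Yu's construction and Kaletha's regular supercuspidal packets, which by Fintzen's exhaustion account for all supercuspidals in large residual characteristic and carry their own candidate parameters.

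The hard part, and the reason the conjecture remains open in general, is essentially twofold. First, Conjecture \ref{conj:crude_LLC} is not accompanied by a characterization strong enough to pin down \(\LL\) uniquely, so one must still argue that the various partial constructions agree on their common ground; matching two \emph{a priori} different correspondences for the same group typically requires extra input coming from the refined endoscopic character identities of the later conjectures in the paper, from global automorphic multiplicity formulas, or from compatibility with the Fargues--Scholze semi-simplified correspondence \cite{FarguesScholze}. Second, for exceptional groups outside the reach of endoscopy from classical groups, and for wildly ramified supercuspidals in small residual characteristic, there is at present no method to produce the parameter attached to a given supercuspidal representation; even the passage from a semi-simplified parameter to the full parameter (recovery of the \(\SL_2\)-factor, equivalently of the monodromy operator \(N\)) is an open problem beyond the tempered case, as the discussion surrounding \eqref{eq:Langlands_to_WD} and \eqref{eq:param_decomp} shows. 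Any honest plan would therefore have to grant these as assumptions and verify the listed compatibilities one by one, rather than prove them from scratch.
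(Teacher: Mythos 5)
This statement is a conjecture, and the paper does not prove it; it merely states it, discusses the meaning of the individual properties, and surveys known cases in Section~\ref{sec:known_cases} and the surrounding remarks. You correctly recognize this, and your survey of the landscape — reduction via \eqref{it:crude_LL_prod}, \eqref{it:crude_LL_isog}, \eqref{it:crude_LL_Weil_res} to essentially simple factors, reduction via \eqref{it:crude_LL_temp}, \eqref{it:crude_LL_Lclass} and the parallel Langlands classifications to the essentially discrete case, base cases for tori and $\GL_n$, endoscopic transfer for classical groups, and the Kaletha/Fintzen--Kaletha--Spice constructions for supercuspidals of tame groups in large residual characteristic — matches the paper's own commentary quite closely, including the caveat (made explicitly in the paper right after the conjecture) that this list of properties is almost certainly insufficient to pin down $\LL$ and that a characterization is the real desideratum. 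Two small points where you could be more precise relative to the paper's discussion: the paper notes (in the remarks on the semisimplified correspondence) that Conjecture~\ref{conj:crude_LLC_ss} does \emph{not} immediately imply Conjecture~\ref{conj:crude_LLC} because one must show that $\LLss(\pi)$ of a discrete series $\pi$ lifts to an essentially discrete Langlands parameter, with the Gan--Harris--Sawin--Beuzart-Plessis result as the main partial progress; and your claim that ``recovery of the $\SL_2$-factor is an open problem beyond the tempered case'' should be read against the paper's observation that for tempered (or essentially tempered) parameters the passage from the Weil--Deligne parameter $(\rho,N)$ back to $\phi$ \emph{is} unique, so that the open problem is really lifting a semi-simple parameter to a discrete one, not reconstructing $\phi$ from $(\rho,N)$ in the tempered range.
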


The list of properties in Conjecture \ref{conj:crude_LLC} is not exhaustive, in particular we did not discuss the relation with \(L\)-functions, \(\epsilon\)-factors and \(\gamma\)-factors.
This list is certainly not enough to characterize the map \(\LL\), and if we omit \eqref{it:crude_LL_infchar_nonA} it may be possible to prove the conjecture in the non-Archimedean case using rather formal arguments (essentially by comparing, for simple and simply connected \(G\), the cardinality of the set of essentially square-integrable irreducible representations of \(G(F)\) with that of the set of essentially discrete parameters), but this would not give a lot of insight.
So it is desirable to have constructions and characterizations of the map \(\LL\) rather than just a proof of Conjecture \ref{conj:crude_LLC}.
We refer the interested reader to \cite{Harris_LL} for a survey of the possible characterizations.

We also warn the reader that there are actually two versions of the conjecture, corresponding to the two possible normalizations of the Artin reciprocity map in local class field theory.
According to \cite[\S 4]{KottwitzShelstad_corr} these should be related by a certain automorphism of \({}^L G\), which according to \cite{Kaletha_genericity}, \cite{AdamsVogan_contra} and \cite{Prasad_contragredient} is itself related to taking contragredient representations.
Thus another way to state the relation between the two normalizations is to say that we should obtain one from the other by composing with the involution \(\pi \mapsto \tilde{\pi}\).

Cases for which the conjecture is known (with a ``natural'' construction or characterization) include the Archimedean case \cite{Langlands_class}, general linear groups over non-Archimedean fields \cite{LaumonRapoportStuhler} \cite{Henniart_LLC} \cite{HarrisTaylor} \cite{Scholze_LLC}, \(\mathrm{GSp}_4\) over finite extensions of \(\Q_p\) \cite{GanTakeda_GSp4}, inner forms of special linear groups over finite extensions of \(\Q_p\) \cite{HiragaSaito}, and quasi-split classical groups over finite extensions of \(\Q_p\) \cite{Arthur_book} \cite{Mok_unitary}.
More cases will be discussed later.

The rest of this section is devoted to remarks on the properties in the conjecture.

\subsubsection{Compatibility with the case of tori}

The functoriality assumptions \eqref{it:crude_LL_prod} and \eqref{it:crude_LL_isog} imply the following compatibilities with the case of tori.
\begin{itemize}
\item The map \(\LL\) should be compatible with central characters in the following sense.
  Let \(Z\) be the maximal central torus in \(G\) so that we have a surjective morphism \({}^L G \rightarrow {}^L Z\).
  Then all elements of \(\Pi_{\phi}(G)\) should have central character determined by composing \(\phi\) with this surjection and applying \(\LL^{-1}\).
\item Langlands defined (see \cite[\S 10.2]{Borel_autLfunc}) a morphism
  \[ H^1_\cont(\W_F, Z(\Ghat)) \to \Hom_{\cont}(G(F), \C^{\times}). \]
  For a continuous \(1\)-cocycle \(c: \W_F \to Z(\Ghat)\) with corresponding character \(\chi: G(F) \to \C^\times\) we should have \(\LL(\pi \otimes \chi) = c \LL(\pi)\).
\end{itemize}

\subsubsection{Reduction to the discrete case} \label{sec:red_crude_LLC}

Using Proposition \ref{pro:class_temp_L2}, the Langlands classification (Theorem \ref{thm:Langlands_classification}, Lemma \ref{lem:Langlands_class_abs}) and the ``Langlands classification for parameters'' (see Section \ref{sec:parameters_reductions}), properties \eqref{it:crude_LL_temp} and \eqref{it:crude_LL_Lclass} imply that \(\pi\) is tempered if and only if \(\LL(\pi)\) is tempered.
In fact we see that these parallel results for smooth representations of reductive groups and Langlands parameters reduce the construction of \(\LL\) to the essentially square-integrable case, and with property \eqref{it:crude_LL_img} we see that the image of \(\LL\) should be the set of \(G\)-relevant Langlands parameters.

\subsubsection{The unramified case}

From properties \eqref{it:crude_LL_torus}, \eqref{it:crude_LL_temp} and \eqref{it:crude_LL_Lclass} it follows that if \(G\) is unramified and \(K\) is a hyperspecial compact open subgroup of \(G(F)\) then on \(K\)-unramified irreducible representations of \(G(F)\) (i.e.\ representations having non-zero \(K\)-invariants) the map \(\LL\) is given by the Satake isomorphism.
More precisely in this case the minimal Levi subgroup \(M_0\) is an unramified torus and unramified representations of \(G(F)\) are parametrized by orbits under the rational Weyl group of continuous characters \(\chi: M_0(F) \to \C^\times\).
The unramified representation \(\pi\) corresponding to the orbit of \(\chi\) is the unique unramified constituent of \(i_B^G \chi\), for any Borel subgroup \(B\) of \(G\) containing \(M_0\).
We have \(\LL(\pi) = \iota_{M_0} \circ \LL(\chi)\), in other words \(\LL(\pi)\) is the parameter associated to \(\pi\) by the Satake isomorphism.
In the tempered case, that is when \(\chi\) is unitary, this follows immediately from property \eqref{it:crude_LL_temp}.
The general case is more subtle, and can be deduced from the Gindikin-Karpelevich formula \cite[Theorem 3.1]{Casselman_unr} (see \cite[p. 219]{CasselmanShalika} for the values of the constants in the case of an unramified group)\footnote{To be honest the arguments in \cite{Casselman_unr} assume that \(\chi\) is regular but similar arguments work using only partial regularity.}.

\subsubsection{The semisimplified correspondence and algebraicity}

For non-Archimedean \(F\) property \eqref{it:crude_LL_infchar_nonA} says that the map \(\LLss: \pi \mapsto \LL(\pi) \circ \iota_W\) is compatible with the notion of supercuspidal support (Theorem \ref{thm:cusp_supp}).
This suggests the following conjecture.

\begin{conjecture} \label{conj:crude_LLC_ss}
  Assume that \(F\) is non-Archimedean.
  Let \(C\) be any algebraically closed field of characteristic zero and choose a square root \(\sqrt{q} \in C\).
  There should exist for each connected reductive group \(G\) over \(F\) a map \(\LLss\) from the set of isomorphism classes of smooth irreducible representations of \(G(F)\) over \(C\) to the set of \(\Ghat\)-conjugacy classes of continous semi-simple morphisms \(\W_F \to {}^L G\) which are compatible with \({}^L G \to \Gamma\), satisfying the obvious analogue of \eqref{it:crude_LL_torus}, \eqref{it:crude_LL_prod}, \eqref{it:crude_LL_isog} in Conjecture \ref{conj:crude_LLC}, as well as the following analogue of property \eqref{it:crude_LL_infchar_nonA} in Conjecture \ref{conj:crude_LLC}.

  If \(P=MN\) is a parabolic subgroup of \(G\) and \(\sigma\) is an irreducible smooth representation of \(M(F)\) then for any irreducible subquotient \(\pi\) of \(i_P^G \sigma\) we should have \(\LLss(\pi)=\iota_M \circ \LLss(\sigma)\).

  Moreover these maps \(\LLss\) should be functorial in \((C,\sqrt{q})\)\footnote{One could certainly avoid the choice of a square root of \(q\) by modifying Langlands dual groups.
  We do not attempt to explain this here, see \cite[\S 5.3]{BuzzardGee_conj} and \cite[\S 2.2]{Imai_LLladic}.}.
\end{conjecture}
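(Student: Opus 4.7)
The plan is to approach this via the Fargues--Scholze construction \cite{FarguesScholze}, which is explicitly alluded to in the introduction. Observe first that by reduction via the parabolic induction axiom and Theorem \ref{thm:cusp_supp}, it suffices to define \(\LLss(\pi)\) for supercuspidal \(\pi\) and then extend by setting \(\LLss(\pi) = \iota_M \circ \LLss(\sigma)\) for the supercuspidal support \((M,\sigma)\) of an arbitrary irreducible \(\pi\); Theorem \ref{thm:cusp_supp}(2) together with Lemma \ref{lem:conj_Lemb_Levi} guarantees this is well-defined up to \(\Ghat\)-conjugacy, provided one checks independence of the choice of \(P\) with Levi factor \(M\).

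For the construction of \(\LLss\) on supercuspidals, I would first fix \(C = \Qellbar\) for a prime \(\ell\) different from the residual characteristic of \(F\). The plan is to exploit the moduli stack \(\operatorname{Bun}_G\) of \(G\)-bundles on the Fargues--Fontaine curve together with the realization of smooth \(G(F)\)-representations as \(\ell\)-adic sheaves on the basic (HN-semistable) stratum. Each irreducible supercuspidal \(\pi\) determines such a sheaf \(\mathcal{F}_\pi\), on whose cohomology the excursion operators in the sense of V.~Lafforgue, indexed by finite tuples of representations of \({}^L G\) and elements of \(\W_F\), act as scalars. Lafforgue's formalism (adapted by Fargues--Scholze using geometric Satake for \(\operatorname{Bun}_G\)) then repackages this system of scalars as a canonical \(\Ghat\)-conjugacy class of continuous semisimple morphisms \(\LLss(\pi) : \W_F \to {}^L G\).

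Next, I would verify the axioms. Compatibility with products is essentially tautological from \(\operatorname{Bun}_{G_1 \times G_2} = \operatorname{Bun}_{G_1} \times \operatorname{Bun}_{G_2}\) and the multiplicativity of excursion operators. Compatibility with central isogenies follows from the induced map on moduli stacks and the compatibility of geometric Satake with isogenies of dual groups. Compatibility with tori (property \eqref{it:crude_LL_torus}) reduces to checking that for \(T\) a torus the Fargues--Scholze parameter coincides with the one from class field theory, which one verifies using the Abel--Jacobi morphism for the Fargues--Fontaine curve and the explicit description of \(\operatorname{Bun}_T\). The crucial axiom analogous to \eqref{it:crude_LL_infchar_nonA} is the compatibility with parabolic induction: this follows from the fact that parabolic induction corresponds geometrically to the diagram \(\operatorname{Bun}_M \leftarrow \operatorname{Bun}_P \to \operatorname{Bun}_G\) and that excursion operators commute with the resulting constant term functor in the appropriate way. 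Functoriality in \((C,\sqrt{q})\) is then obtained by descending the construction to \(\Qell\)-coefficients (or even to a number field) via the integrality properties of the excursion operators, and then base-changing to arbitrary algebraically closed \(C\) of characteristic zero along any field embedding compatible with the choice of \(\sqrt{q}\); the independence of such choices reflects the algebraic nature of the characteristic polynomials of excursion operators.

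The main obstacle I expect is establishing the constant term compatibility in sufficient generality. This requires a geometric Satake equivalence for \(\operatorname{Bun}_G\) together with a precise compatibility between the fusion product, the hyperbolic localization realizing the constant term, and the excursion operators. A secondary difficulty is ensuring that the semisimple parameter genuinely lands in \({}^L G\) rather than merely in the group of self-conjugacy classes of tuples (i.e.\ verifying that the resulting \(\Ghat\)-pseudo-character is the pseudo-character of an honest morphism), which for non-classical \(G\) uses Richardson's theorem and a careful analysis of the Tannakian formalism; this is known but delicate. Finally, comparing the resulting \(\Qellbar\)-valued correspondence with the expected \(\C\)-valued one from Conjecture \ref{conj:crude_LLC} via \(\LL \circ \iota_W\) requires more than abstract isomorphism of fields, since one must match the normalization implicit in the choice of \(\sqrt{q}\) with the unitary normalization used on the Archimedean-adjacent side.
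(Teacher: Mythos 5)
This statement is a \emph{conjecture}, and the paper does not prove it; immediately after stating it the paper records the state of the art: Genestier--Lafforgue and Fargues--Scholze have constructed maps \(\LLss\) over \(C=\Qellbar\) satisfying every listed property \emph{except} functoriality with respect to the coefficient field, which the paper explicitly says ``seems to remain open.'' Your proposal is therefore not a proof of anything the paper claims to prove; it is a sketch of the published Fargues--Scholze construction plus an assertion that the genuinely open part (functoriality in \((C,\sqrt q)\)) follows routinely.

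That assertion is exactly where the gap lies. You write that functoriality ``is then obtained by descending the construction to \(\Qell\)-coefficients (or even to a number field) via the integrality properties of the excursion operators, and then base-changing to arbitrary algebraically closed \(C\).'' This sweeps under the rug the actual difficulty: the Fargues--Scholze construction a priori depends on the auxiliary prime \(\ell\), and it is not known that the resulting semisimple parameter is independent of \(\ell\) (equivalently, that the correspondence descends to an \(\ell\)-independent, purely algebraic object functorial in \((C,\sqrt q)\)). Appealing to ``integrality of excursion operators'' and ``characteristic polynomials being algebraic'' does not by itself establish this; one would need an \(\ell\)-independence statement for the excursion algebra acting on the cohomology of \(\operatorname{Bun}_G\), or an identification of the parameter with a construction intrinsic to \(C\), neither of which you supply. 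The paper's own reduction-to-supercuspidal-support observation (which you correctly reproduce) reduces the problem further but does not resolve it: one still needs the supercuspidal part of \(\LLss\) to vary functorially in \((C,\sqrt q)\), and that is precisely what is not yet known. In short, your proposal correctly describes the known partial results but misrepresents the remaining obstacle as a routine base-change argument.

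A smaller issue: you frame the well-definedness of the supercuspidal-support reduction as needing only Theorem \ref{thm:cusp_supp}(2) and Lemma \ref{lem:conj_Lemb_Levi}, but Lemma \ref{lem:conj_Lemb_Levi} concerns independence of the choice of parabolic \(P\) with Levi \(M\); you still need to check that conjugate pairs \((M,\sigma)\) and \((M',\sigma')\) yield \(\Ghat\)-conjugate composites \(\iota_M\circ\LLss(\sigma)\) and \(\iota_{M'}\circ\LLss(\sigma')\), which requires compatibility of \(\LLss\) with \(G(F)\)-conjugation on Levi data. This is true but is a separate point worth stating.
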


For \(C=\Qellbar\), where \(\ell\) does not equal the residue characteristic of \(F\), Genestier-Lafforgue \cite{GenestierLafforgue} (in positive characteristic) and Fargues-Scholze \cite{FarguesScholze} have constructed maps \(\LLss\) satisfying all properties in Conjecture \ref{conj:crude_LLC_ss} except for functoriality with respect to the coefficient field, which seems to remain open.

Conjecture \ref{conj:crude_LLC} implies the case \(C=\C\) of Conjecture \ref{conj:crude_LLC_ss}, again excluding functoriality in \((C,\sqrt{q})\).
Assuming Conjecture \ref{conj:crude_LLC} one can also show that the map \(\LL \circ \iota_W\) determines the map \(\LL\), by considering first the case of tempered representations and using the decomposition \eqref{eq:param_decomp} and the fact that an \(\mathfrak{sl}_2\) triple (here, in the connected centralizer of \(\phi_0\) in \(\Ghat\)) is determined by its semi-simple element up to conjugation.
In the case of general linear groups the construction of the map \(\LL\) was reduced to the supercuspidal case by Zelevinsky \cite{Zelevinsky_ind2}.
In general however Conjecture \ref{conj:crude_LLC_ss} does not immediately imply Conjecture \ref{conj:crude_LLC}.
What is missing is the fact that for any essentially square-integrable irreducible smooth representation \(\pi\) of \(G(F)\), the semi-simplified parameter \(\LLss(\pi)\) comes from an \emph{essentially discrete} Langlands parameter (which as above is automatically unique up to conjugation by the centralizer of \(\LLss(\pi)\) in \(\Ghat\)).
In this direction Gan, Harris, Sawin and Beuzart-Plessis have recently shown \cite[Theorem 1.2]{GanHarrisSawin} that the Genestier-Lafforgue semi-simplified Langlands parameter of an essentially square-integrable representation, say with central character having finite order, comes from a (again, unique) \emph{tempered} Langlands parameter.

Note that properties \eqref{it:crude_LL_L2}, \eqref{it:crude_LL_temp} and \eqref{it:crude_LL_Lclass} in Conjecture \ref{conj:crude_LLC} make essential use of the topology on the coefficient field \(\C\).
The notion of essentially discrete Langlands parameter is purely algebraic (it does not rely on the topology of the coefficient field) so there ought to be a purely algebraic characterization of essentially square-integrable representations.
We check the validity of this intuition in the following proposition.

\begin{proposition}
  Let \(F\) be a non-Archimedean local field.
  Assume Conjecture \ref{conj:crude_LLC}.
  Let \(\pi\) be an irreducible smooth representation of \(G(F)\).
  Assume that its central character \(\omega_\pi\) has finite order (we may reduce to this case by twisting by a continuous character \(G(F) \to \C^\times\)).
  Then \(\pi\) is essentially square-integrable if and only if for any parabolic subgroup \(P=MN\) of \(G\) and for any character \(\chi\) of \(A_M(F)\) occurring in \(r^G_P \pi\) there exists an integer \(N \geq 1\) such that \(\chi^N\) is equal to \(\prod_\alpha ||\alpha||^{n_\alpha}\) for some integers \(n_\alpha>0\), where the product ranges over the simple roots of \(A_M\) in \(\Lie N\).
\end{proposition}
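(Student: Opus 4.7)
The plan is to reduce to Proposition \ref{pro:char_L2_rep} by taking absolute values, with the real work being to upgrade the real-combination condition on $|\chi|$ to a rational-combination condition and to obtain finite-order of the unitary part $\chi/|\chi|$.

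First, a preliminary reduction. Since $\omega_\pi$ has finite order, it is unitary, so by the definition of essentially square-integrable the twist $\eta: G(F) \to \R_{>0}$ required to unitarize the central character must satisfy $\eta|_{Z(G)(F)} = 1$. Any such $\eta$ factors through $G^\ab(F)$, and the image of $Z(G)(F) \to G^\ab(F)$ has finite index; since $\R_{>0}$ is torsion-free, $\eta$ must be trivial. Hence $\pi$ essentially square-integrable is equivalent to $\pi$ being square-integrable in the strict sense, and we may apply Proposition \ref{pro:char_L2_rep}.

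The easy direction ``$\Leftarrow$'' is immediate: if $\chi^N = \prod_\alpha ||\alpha||^{n_\alpha}$ with $n_\alpha \in \Z_{>0}$, then taking absolute values gives $|\chi|^N = \prod ||\alpha||^{n_\alpha}$, so $|\chi|$ lies in the strictly positive rational cone spanned by simple roots of $A_M$ in $N$. Proposition \ref{pro:char_L2_rep} then yields essential square-integrability.

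For the ``$\Rightarrow$'' direction, assume $\pi$ is essentially square-integrable. By Proposition \ref{pro:char_L2_rep}, for each exponent $\chi$ the absolute value $|\chi|$ is a strictly positive $\R$-linear combination of simple roots. Writing $\chi = \chi_u\,|\chi|$ with $\chi_u: A_M(F) \to S^1$, the algebraic condition is equivalent to the conjunction (a) $\chi_u$ has finite order, and (b) $|\chi|$ is a positive $\Q$-combination of simple roots (equivalently, lies in $X^*(A_M) \otimes \Q \subset \mathfrak{a}_M^*$). Now invoke the conjecture: by property \eqref{it:crude_LL_L2}, the parameter $\phi = \LL(\pi)$ is essentially discrete, and since $\pi$ is tempered, $\phi|_{\W_F}$ is bounded. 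Property \eqref{it:crude_LL_infchar_nonA} relates $\LLss(\pi) = \phi \circ \iota_W$ to the cuspidal support $(M_0, \sigma_0)$ via $\LLss(\pi) = \iota_{M_0} \circ \LLss(\sigma_0)$, and the exponent $\chi$ (up to Weyl conjugacy) is $\omega_{\sigma_0}|_{A_M(F)}$.

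For (b), the polar decomposition $\phi \circ \iota_W(\Frob) = \phi(\Frob) \cdot \phi|_{\SL_2}\!\left(\mathrm{diag}(q^{1/2}, q^{-1/2})\right)$ separates a unitary from a positive-hyperbolic factor which commute. The positive part controls $|\chi|$: since $\phi|_{\SL_2}: \SL_2(\C) \to \widehat{G}$ is a morphism of algebraic groups, its image at $\mathrm{diag}(q^{1/2}, q^{-1/2})$ equals $q^{h/2}$ for an integer cocharacter $h \in X_*(\widehat{G})$. Projecting to the appropriate subtorus $\widehat{A_M}$ yields an element $q^{h'/2}$ with $h' \in X^*(A_M)$, so $|\chi|$ is a half-integer element of $X^*(A_M) \otimes \R$ and hence, combined with its expression as a positive combination of simple roots via Prop \ref{pro:char_L2_rep}, is a positive $\Q$-combination of them.

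For (a), the ramified part is automatic: any continuous Langlands parameter is trivial on the $\widehat{G}$-factor over some open subgroup of $I_F$, so $\phi|_{I_F}$ has finite image, and its projection $\chi^\vee|_{I_F}$ to $\widehat{A_M}$ is also finite, which by local class field theory means $\chi|_{A_M(\mathcal{O}_F)^r}$ is of finite order. The Frobenius part of $\chi_u$ is the delicate point: it comes from the unitary semisimple element $\phi(\Frob)$ projected to $\widehat{A_M}$, and one must show this projection is a root of unity. The essential discreteness of $\phi$ rigidifies $\phi(\Frob)$ up to the finite group $\Cent(\phi, \widehat{G})/Z(\widehat{G})^\Gamma$ once its restrictions to $I_F$ and $\SL_2$ are fixed, and the hypothesis that $\omega_\pi$ has finite order forces $\phi(\Frob)$ modulo $Z(\widehat{G})^\Gamma$ to itself be of finite order in the relevant quotient; combining these two finiteness statements yields the finite order of $\chi_u$.

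The main obstacle is step (a). The structural statements about essentially discrete parameters and about propagation of finite-order constraints from $Z(G)$ to arbitrary centers of Levi subgroups along the cuspidal support correspondence require careful bookkeeping with the Langlands dual group and the various $L$-embeddings $\iota_{M_0}$ from Section \ref{sec:parabolic}; the rationality statement (b), by contrast, follows almost directly from the algebraicity of $\phi|_{\SL_2}$.
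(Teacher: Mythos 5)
Your overall strategy matches the paper's: reduce to Proposition \ref{pro:char_L2_rep}, use properties \eqref{it:crude_LL_torus}, \eqref{it:crude_LL_prod}, \eqref{it:crude_LL_isog}, \eqref{it:crude_LL_infchar_nonA} to identify $\chi$ with the projection of a Langlands parameter to $\widehat{A_M}$ via class field theory, and then exploit essential discreteness of $\LL(\pi)$ together with the finite-order hypothesis on $\omega_\pi$. Your decomposition $\chi=\chi_u|\chi|$ with conditions (a) and (b) is an equivalent reformulation of the target statement. Part (b) is fine (the $\SL_2$ factor is algebraic, so its contribution to any weight of $\widehat{A_M}$ is an integer power of $q^{1/2}$), and the ``ramified part is automatic'' observation for (a) is correct.

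However, the Frobenius half of (a) is the crux of the proof, and what you offer there is not an argument: ``essential discreteness rigidifies $\phi(\Frob)$ up to $\Cent(\phi,\widehat G)/Z(\widehat G)^\Gamma$'' and ``forces $\phi(\Frob)$ modulo $Z(\widehat G)^\Gamma$ to be of finite order'' are not assertions that combine to a proof without substantial further work, and you acknowledge this yourself. The paper closes the gap with a concrete chain: choose a normal open $U\subset I_F$ on which $\phi$ is trivial and through which the $\Gamma$-action on $\widehat{G}$ factors, pick $N$ so that $w^N$ is central in $\W_F/U$, deduce that $\phi(w)^N$ centralizes $\phi(\WD_F)$; enlarge $N$ by $|\Gal(E/F)|\cdot|\Cent(\phi,\widehat G)/Z(\widehat G)^\Gamma|$ to force $\phi(w)^N\in Z(\widehat G)^\Gamma$; then use that $Z(\widehat G)^\Gamma\to\widehat{A_G}$ has finite kernel and that finite-order $\omega_\pi$ kills the $\widehat{A_G}$-component, to conclude $\phi(w)^N=1$. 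Only then are eigenvalue estimates in a faithful $\SL(V)$-representation used to conclude. The paper also avoids the $\chi_u/|\chi|$ split by reducing, via triviality of $\chi^N$ on $A_G(F)$ and the isogeny $A_M/A_G\to\mathbb{G}_m^n$ induced by the simple roots, to showing $\chi^N$ takes values in $q^\Z$ for some $N$, after which positivity of the exponents is an automatic consequence of Proposition \ref{pro:char_L2_rep}; this reduction is absent from your outline. Finally, your preliminary step arguing that $\pi$ is square-integrable in the strict sense is unnecessary: Proposition \ref{pro:char_L2_rep} characterizes essential square-integrability under the unitary central character hypothesis, which your finite-order hypothesis already supplies directly.
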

\begin{proof}
  The ``if'' implication is obvious using Proposition \ref{pro:char_L2_rep}, so we are left to prove the ``only if'' implication.
  Let \(P=MN\) and \(\chi\) be as in the proposition.
  There exists an irreducible quotient \(\sigma\) of the representation \(r^G_P \pi\) of \(M(F)\) whose central character \(\omega_\sigma\) satisfies \(\omega_\sigma|_{A_M(F)} =\chi\), and by Frobenius reciprocity we have an embedding \(\pi \hookrightarrow{} i_P^G \sigma\).
  Let \(\phi_M = \LL(\sigma)\) and \(\phi = \LL(\pi)\).
  By property \eqref{it:crude_LL_infchar_nonA} in Conjecture \ref{conj:crude_LLC} we have \(\iota_M \circ \phi_M \circ \iota_W = \phi \circ \iota_W\) (up to conjugacy by \(\Ghat\)).
  By properties \eqref{it:crude_LL_torus}, \eqref{it:crude_LL_prod} and \eqref{it:crude_LL_isog} in Conjecture \ref{conj:crude_LLC} the character \(\chi\) corresponds by local class field theory to the composition
  \[ \WD_F \xrightarrow{\phi_M} {}^L M \rightarrow{} {}^L A_M \]
  which may be seen as a continuous morphism \(\W_F \to \widehat{A_M}\) because the torus \(A_M\) is split.
  Note that pre-composing with \(\iota_W\) does not change this morphism.
  Because we already know Proposition \ref{pro:char_L2_rep} it is enough to prove that some integral power of \(\chi\) is of the form \(\prod_\alpha ||\alpha||^{n_\alpha}\) for some integers \(n_\alpha\).
  By assumption some integral power of \(\chi\) is trivial on \(A_G(F)\).
  The simple roots of \(A_M\) on \(N\) give an isogeny \(A_M/A_G \to \mathbb{G}_m^n\) for some integer \(n \geq 0\), so it is enough to prove that some integral power of \(\chi\) takes values in \(q^\Z\).

  There exists an open subgroup \(U\) of the inertia subgroup \(I_F \subset \W_F\) such that the action of \(U\) on \(\Ghat\) is trivial and \(\phi\) is trivial on \(U\).
  Up to taking a smaller subgroup we may assume that \(U\) is normalized by \(\W_F\) (this follows from a simple Galois-theoretic argument).
  There exists an integer \(N \geq 1\) such that for any \(w \in \W_F\) the action of \(w^N\) by conjugation on \(\W_F/U\) is trivial (this can be proved in two steps, first for \(I_F/U\) and then for a lift of a generator of \(\W_F/I_F\)).
  It will be convenient to see the Langlands dual group \({}^L G\) as \(\Ghat \rtimes \Gal(E/F)\) for some finite Galois extension \(E/F\) (see Remark \ref{rem:ss_via_linear_quot}).
  Thus for any \(w \in \W_F\) we have that \(\phi(w)^N\) centralizes \(\phi(\WD_F)\).
  Up to replacing \(N\) by \(|\Gal(E/F)| \times |\Cent(\phi, \Ghat)/Z(\Ghat)^\Gamma|\) we obtain \(\phi(w)^N \in Z(\Ghat)^\Gamma\) for all \(w \in \W_F\).
  The natural map \(Z(\Ghat)^\Gamma \to \widehat{A_G}\) has finite kernel.
  Because we have assumed that \(\omega_\pi\) has finite order, up to replacing \(N\) by a non-zero multiple we even have \(\phi(w)^N=1\) for all \(w \in \W_F\).
  Up to replacing \(N\) by \(2N\), this implies that for any finite-dimensional algebraic representation \(r: {}^L G \to \GL(V)\) and for any \(w \in \W_F\), any eigenvalue \(\lambda \in \C^\times\) of \(r(\phi(\iota_W(w)))\) satisfies \(\lambda^N \in q^\Z\).
  Taking for \(r\) a closed embedding \({}^L G \hookrightarrow{} \SL(V)\), we obtain the claim because any irreducible representation of \({}^L M\), and in particular any character of \(\widehat{A_M}\), occurs in the restriction of \(V^{\otimes a}\) for some integer \(a \geq 0\).
\end{proof}

\subsubsection{Cuspidality and parameters}

Assume that \(F\) is non-Archimedean.
Property \eqref{it:crude_LL_infchar_nonA} implies that for any irreducible smooth representation \(\pi\) of \(G(F)\), if \(\LL(\pi)\) is essentially discrete and trivial on \(\SL_2\) then \(\pi\) is supercuspidal.
Contrary to the case of \(\GL_n\), in general the converse is not true, i.e.\ there exists supercuspidal representations \(\pi\) whose Langlands parameter \(\LL(\pi)\) is not trivial on \(\SL_2\).
A related matter is that the classification of essentially square-integrable representations in terms of supercuspidal representations (of Levi subgroups) is much more complicated in general than in the case of \(\GL_n\).
See \cite{MoeglinTadic} and \cite{Xu_cusp_supp} for the case of classical groups.
As we will briefly review in Section \ref{sec:known_cases}, Kaletha's construction in \cite{Kaletha_scuspL} of supercuspidal \(L\)-packets (under some assumptions on \(G\)) isolates the supercuspidal representations which correspond (or should correspond, depending on which definition of a local Langlands correspondence one chooses) to essentially discrete parameters trivial on \(\SL_2\).

\subsection{Refined local Langlands for quasi-split groups} \label{sec:refined_LLC_qs}

In some applications having just the map \(\LL\) is too crude, e.g.\ to formulate the global multiplicity formula for the automorphic spectrum of a connected reductive group over a global field, and so we would like to understand the fibers \(\Pi_\phi(G)\).

In this section we assume \(C=\C\) and that \(G\) is quasi-split.
For a Langlands parameter \(\phi: \WD_F \to {}^L G\) denote \(S_\phi=\Cent(\phi,\Ghat)\) (a reductive subgroup of \(\Ghat\)), and define \(\Sbar_\phi=S_\phi/Z(\Ghat)^\Gamma\).
Recall that a parameter \(\phi\) is essentially discrete if and only if \(\Sbar_\phi\) is finite.
It can happen that \(\pi_0(\Sbar_\phi)\) is non-abelian (even in the principal series case, that is if \(\phi\) factors through \(\iota_T : {}^L T \hookrightarrow {}^L G\) where \(T\) is part of a Borel pair \((B,T)\) defined over \(F\)!).
For \(F = \R\) however, it is always abelian, in fact there is a maximal torus \(\mathcal{T}\) of \(\Ghat\) such that \(S_\phi \cap \mathcal{T}\) meets every connected component of \(S_\phi\).
For a finite group \(A\) denote by \(\Irr(A)\) the set of isomorphism classes of irreducible representations of \(A\) over \(\C\).

\begin{conjecture} \label{conj:refined_LLC_qs}
  For each Langlands parameter \(\phi\) there should exist an embedding \(\Pi_\phi(G) \to \Irr(\pi_0(\Sbar_\phi))\).
  For non-Archimedean \(F\) this should be a bijection.
\end{conjecture}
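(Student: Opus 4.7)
The plan is to reduce to essentially discrete $\phi$ and then construct the embedding by combining Whittaker normalization with endoscopic character identities. For the reduction, apply the parameter version of the Langlands decomposition \eqref{eq:param_decomp}, writing $\phi = \phi_0 \chi$ with $\phi_0$ a tempered parameter factoring through a Levi $\mathcal{M} \subset {}^L G$. If $\Pi_\phi(G)$ is to be nonempty then $\mathcal{M}$ must be $G$-relevant, so we get a Levi subgroup $M$ of $G$ and an essentially discrete tempered parameter $\phi_M$ of $M$ with $\iota_M \circ \phi_M$ conjugate to $\phi_0$. Properties \eqref{it:crude_LL_temp} and \eqref{it:crude_LL_Lclass} of Conjecture \ref{conj:crude_LLC}, together with Theorem \ref{thm:Langlands_classification} and Proposition \ref{pro:class_temp_L2}, yield a natural bijection $\Pi_{\phi_M}(M) \simeq \Pi_\phi(G)$, and a direct calculation shows that passage to centralizers inside $\Mhat$ induces an isomorphism $\pi_0(\Sbar_\phi) \simeq \pi_0(\Sbar_{\phi_M})$. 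This reduces Conjecture \ref{conj:refined_LLC_qs} to the essentially discrete tempered case.

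\textbf{Construction via Whittaker data and endoscopy.}
In the essentially discrete case, fix a Whittaker datum $\mathfrak{w}$ for the quasi-split group $G$. The embedding is to be defined through a pairing
$$\langle \cdot, \cdot \rangle_\mathfrak{w} : \pi_0(\Sbar_\phi) \times \Pi_\phi(G) \longrightarrow \C$$
normalized so that the unique $\mathfrak{w}$-generic element $\pi(\phi, \mathfrak{w}) \in \Pi_\phi(G)$ (Shahidi's conjecture) corresponds to the trivial character, and constrained by the endoscopic character identities: for every semisimple $s \in S_\phi$ the virtual character $\sum_{\pi} \langle s, \pi \rangle_\mathfrak{w} \Theta_\pi$ on $G(F)$ should coincide with the Langlands-Shelstad transfer of the stable character attached to the endoscopic pair $(H, \phi_H)$ determined by $(\phi, s)$. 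Granting existence and stability of such transfers, linear independence of Harish-Chandra characters then forces $\pi \mapsto \langle \cdot, \pi \rangle_\mathfrak{w}$ to take values in $\Irr(\pi_0(\Sbar_\phi))$ and to be injective; bijectivity in the non-Archimedean case is equivalent to the assertion that the transfers attached to all elliptic endoscopic data exhaust the stable tempered virtual characters on $G(F)$ with infinitesimal character dictated by $\phi$.

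\textbf{Main obstacle.}
The hard part is the simultaneous construction of the packets and the verification of the endoscopic identities, together with surjectivity in the non-Archimedean case. For Archimedean $F$ this is known by work of Langlands, Shelstad and Vogan, relying heavily on Harish-Chandra's analysis of characters on real reductive groups and on an explicit parametrization by Cartan subgroups. For non-Archimedean $F$ the conjecture is established only in special cases: general linear groups, classical and unitary groups via twisted trace formula comparisons \emph{à la} Arthur-Mok, and (regular) supercuspidal packets by Kaletha under tameness hypotheses. A general proof appears to require either a stable local trace formula of sufficient strength to pin down packets and pairings uniquely, or a categorical/geometric input beyond what is currently available --- for instance a functorial way to upgrade the semisimplified parameters of Fargues-Scholze to full Langlands parameters, coupled with a compatibility with the monodromy data coming from $\SL_2$.
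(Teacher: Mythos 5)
This statement is a conjecture, not a theorem with a proof in the paper, so what you have written is a strategy sketch rather than a proof; the broad outline (reduce to the tempered case, then use Whittaker normalization together with endoscopic character identities as in Conjectures \ref{conj:shahidi} and \ref{conj:endo_char_rel_qs}) does track how the paper organizes the material around Conjecture \ref{conj:refined_LLC_qs}.

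However, your ``reduction to the essentially discrete case'' has a genuine gap. The polar decomposition \eqref{eq:param_decomp} produces a \emph{tempered} parameter $\phi_0$ of a Levi, not an essentially discrete one, and the Langlands classification (Theorem \ref{thm:Langlands_classification}) does give a bijection $\Pi_{\phi_0}(M) \simeq \Pi_\phi(G)$ for that step since $J(P,\sigma,\nu)$ is the unique irreducible quotient. But when you go further and replace $\phi_0$ by an essentially discrete parameter $\phi_M$ of a smaller Levi $M$ using Proposition \ref{pro:class_temp_L2}, the induced representations $i_P^G\sigma$ for $\sigma \in \Pi_{\phi_M}(M)$ are in general \emph{reducible}, so the tempered packet $\Pi_{\phi_0}$ is strictly larger than $\Pi_{\phi_M}(M)$; there is no ``natural bijection $\Pi_{\phi_M}(M) \simeq \Pi_\phi(G)$''. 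For the same reason the claimed isomorphism $\pi_0(\Sbar_\phi) \simeq \pi_0(\Sbar_{\phi_M})$ fails: passing from $\Mhat$ to $\Ghat$ the centralizer picks up Weyl-group elements normalizing $\mathcal{M}$ that fix the parameter, and this enlargement of the component group (the R-group phenomenon) is precisely what controls the reducibility of the induced representation. The paper flags this explicitly: ``While Conjecture \ref{conj:crude_LLC} readily reduces to the essentially discrete case using Harish-Chandra's work, the putative analogous reductions for Conjectures \ref{conj:refined_LLC_qs} and \ref{conj:endo_char_rel_qs} appear to be more subtle, involving the study of intertwining operators.'' In other words the reduction from tempered to discrete requires a careful analysis of normalized intertwining operators and R-groups (cf.\ \cite{KeysShahidi}), not just a transport of structure across a bijection that doesn't exist.

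A second, smaller inaccuracy: in the non-Archimedean case, surjectivity of $\iota_{\mathfrak{w}}$ is not quite the assertion that endoscopic transfers ``exhaust the stable tempered virtual characters with given infinitesimal character''. Infinitesimal characters are an Archimedean notion; what one really needs in the $p$-adic case is that the packets $\Pi_\phi(G)$ for tempered $\phi$ partition the tempered dual and that the characters $\Theta^{\mathfrak{w}}_{\phi,s}$, as $s$ varies over $\pi_0(\Sbar_\phi)$, span the same space as the $\Theta_\pi$, $\pi \in \Pi_\phi(G)$.

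\end{document}
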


Of course we do not simply seek the existence of embeddings \(\Pi_\phi(G) \to \Irr(\pi_0(\Sbar_\phi))\), we recall how to characterize them in Conjecture \ref{conj:endo_char_rel_qs} below.

Langlands's classification again reduces the construction of embeddings \(\Pi_\phi(G) \to \Irr(\pi_0(\Sbar_\phi))\) to the tempered case.
So we assume from now on that \(\phi\) is tempered.
These embeddings are not canonical in general: they depend on the choice of a Whittaker datum (up to conjugation by \(G(F)\)).

We briefly recall the notions of Whittaker datum and generic representation for a quasi-split connected reductive group \(G\).
Choose a Borel subgroup \(B\) with unipotent radical \(U\).
For a Galois orbit \(\mathcal{O}\) on the set of simple roots, the group \(U_\mathcal{O} = \left( \prod_{\alpha \in \mathcal{O}} U_{\alpha}(\ol{F}) \right)^{\Gal_F}\) is isomorphic to the additive group of a finite separable extension \(F_{\mathcal{O}}\) of \(F\).
We have a natural surjective morphism from \(U(F)\) to \(\prod_\mathcal{O} U_\mathcal{O}\).
Choosing a non-trivial morphism \(U_\mathcal{O} \to \C^{\times}\) for each orbit \(\mathcal{O}\) yields a morphism \(\theta: U(F) \to \C^\times\), called a generic character.
A Whittaker datum \(\mathfrak{w}\) for \(G\) is such a pair \((U,\theta)\).
The adjoint group \(G_\ad(F)\) acts transitively on the set of such pairs.
If \(F\) has characteristic zero there are only finitely many \(G(F)\)-conjugacy classes of Whittaker data.
If \(F\) is non-Archimedean an irreducible smooth representation \((\pi,V)\) of \(G(F)\) is called \(\mathfrak{w}\)-generic if there is a non-zero linear functional \(\lambda: V \rightarrow \C\) such that \(\lambda( \pi(u) v) = \theta(u) \lambda(v)\) for all \(u \in U(F)\) and \(v \in V\).
For Archimedean \(F\) the notion is more subtle because it requires a topology on the representation.


\begin{conjecture}[Shahidi] \label{conj:shahidi}
  There should be a unique \(\mathfrak{w}\)-generic representation in each \(\Pi_\phi(G)\).
  The conjectural embedding \(\iota_\mathfrak{w} : \Pi_\phi(G) \to \Irr(\pi_0(\ol{S}_\phi))\) (which depends on \(\mathfrak{w}\)) should map this \(\mathfrak{w}\)-generic representation to the trivial representation of \(\ol{S}_\phi\).
\end{conjecture}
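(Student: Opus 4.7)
The plan is to attack Shahidi's conjecture in two reduction steps—first to tempered $\phi$ via the Langlands classification, then to essentially discrete $\phi$ via parabolic induction—and finally to pin down the generic member in a discrete packet using the endoscopic character identities of Conjecture~\ref{conj:endo_char_rel_qs} together with multiplicity one for Whittaker models.

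For the reductions I would combine properties~\eqref{it:crude_LL_temp} and~\eqref{it:crude_LL_Lclass} of Conjecture~\ref{conj:crude_LLC} with Rodier's theorem on genericity of parabolic induction. Write $\phi = \iota_{\mathcal{P}} \circ (\phi_0 \chi)$ as in Section~\ref{sec:parameters_reductions}, where $\phi_0$ is tempered for a Levi $M$. Then $\Pi_\phi(G)$ is in bijection with $\Pi_{\phi_0 \chi}(M)$ via $\sigma \mapsto J(P,\sigma,\nu)$, and Rodier (in the non-Archimedean case; Kostant–Vogan in the Archimedean case) shows that $J(P,\sigma,\nu)$ is $\mathfrak{w}$-generic iff $\sigma$ is $\mathfrak{w}_M$-generic for the canonical Whittaker datum $\mathfrak{w}_M$ on $M$ determined by $\mathfrak{w}$ and $P$. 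Within the tempered case, Proposition~\ref{pro:class_temp_L2}~(3) combined with Rodier's theorem again shows that every tempered irreducible embeds in some $i_P^G \sigma$ with $\sigma$ essentially square-integrable, and that $i_P^G \sigma$ has a unique generic irreducible constituent whenever $\sigma$ is generic. Hence existence, uniqueness, and the matching with trivial character in $\Pi_\phi(G)$ propagate from the essentially discrete packets on Levi subgroups.

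For essentially discrete $\phi$, I would exploit the endoscopic character identity for the trivial endoscopic datum: the stable character
\[ S\Theta_\phi = \sum_{\pi \in \Pi_\phi(G)} \langle \iota_\mathfrak{w}(\pi), 1 \rangle\, \Theta_\pi \]
should, by Conjecture~\ref{conj:endo_char_rel_qs}, be the transfer of the trivial stable distribution. Pair $S\Theta_\phi$ against a Whittaker distribution $W_\mathfrak{w}$ built from a $\mathfrak{w}$-equivariant functional on a Schwartz–Harish-Chandra space: by multiplicity one for Whittaker models on tempered irreducibles, $W_\mathfrak{w}(\Theta_\pi) \neq 0$ only for the (at most) one generic $\pi$ in the packet, so the pairing picks out a single summand. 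The Kottwitz–Shelstad Whittaker normalization of transfer factors forces $W_\mathfrak{w}(S\Theta_\phi) = 1$, so this summand must occur with coefficient $\langle \iota_\mathfrak{w}(\pi),1\rangle = 1$, i.e.\ $\iota_\mathfrak{w}(\pi)$ is the trivial character of $\pi_0(\ol{S}_\phi)$.

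The main obstacle is \emph{existence} of a $\mathfrak{w}$-generic element in a discrete packet: this is not formal from the above setup. Uniqueness is a direct consequence of multiplicity one, and identification with the trivial character is automatic once the endoscopic identities and the Whittaker normalization of transfer factors are in hand; but showing that at least one member of $\Pi_\phi(G)$ is generic requires a new input. In the known cases (Arthur for quasi-split classical groups, Mok for quasi-split unitary groups, and Kaletha's supercuspidal construction for tamely ramified $G$) existence is extracted from a global trace formula comparison in which a globally generic automorphic representation is shown to land in the prescribed local packet. Absent such a global argument, or a purely local substitute, this existence step is precisely where the conjecture is genuinely hard.
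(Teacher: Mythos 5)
This is an open conjecture; the paper states it and offers no proof, so there is no argument of the author's to compare yours against. What you have written is a strategy for deducing Shahidi's conjecture \emph{conditionally} from the other conjectures in the paper, which is a legitimate thing to attempt, but several steps do not hold up.

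First, the reduction from arbitrary to tempered $\phi$ is both unnecessary (the paper explicitly restricts to tempered $\phi$ before stating the conjecture) and incorrect as written: Rodier's theorem controls the number of $\mathfrak{w}$-generic constituents of $i_P^G(\sigma_\nu)$, not which constituent is generic. For $\nu$ strictly dominant the generic constituent is a \emph{subrepresentation} of the standard module, and it equals the Langlands quotient $J(P,\sigma,\nu)$ only when the standard module is irreducible — this is the standard module conjecture of Casselman--Shahidi, itself a nontrivial theorem. Your claim ``$J(P,\sigma,\nu)$ is $\mathfrak{w}$-generic iff $\sigma$ is $\mathfrak{w}_M$-generic'' is false in general. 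Second, your argument in the essentially discrete case relies on the endoscopic character relation at $s=1$, but that relation is tautological: taking $s=1$ gives $H=G$, $\phi_H=\phi$, and \eqref{eq:endo_char_rel_qs} reduces to $S\Theta_\phi = S\Theta_\phi$, which constrains nothing. The nontautological content is the stability of $S\Theta_\phi$ (part (1) of Conjecture~\ref{conj:endo_char_rel_qs}) together with the relations for $s \neq 1$. Third, even granting the existence of a Whittaker distribution $W_\mathfrak{w}$ with the properties you posit and granting that $W_\mathfrak{w}(S\Theta_\phi)=1$ (which you assert from ``the Kottwitz--Shelstad Whittaker normalization'' without explaining the mechanism), the conclusion you would get is $\langle 1,\pi_{\mathrm{gen}}\rangle_\mathfrak{w} = \dim \iota_\mathfrak{w}(\pi_{\mathrm{gen}}) = 1$; a $1$-dimensional representation of $\pi_0(\ol{S}_\phi)$ need not be trivial. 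Triviality requires $\langle s, \pi_{\mathrm{gen}}\rangle_\mathfrak{w} = 1$ for \emph{all} $s$, which is exactly where the nontrivial $s \neq 1$ character identities and the Whittaker normalization of $\Delta[\mathfrak{w},\mathfrak{e}]$ for the corresponding endoscopic data must enter; your sketch omits this entirely. Finally, uniqueness of the generic member in a \emph{discrete} packet is not, as you claim, ``a direct consequence of multiplicity one'': multiplicity one says that a given irreducible has at most one Whittaker model, not that at most one member of a discrete $L$-packet is generic. You are right that existence in the discrete case is where the conjecture is hard, but uniqueness in the discrete case is also a genuine open input, not a formality.
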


In order to characterize the embeddings \(\iota_\mathfrak{w}\) we have to introduce endoscopic data.
Let \(s \in S_\phi\) be a semi-simple element.
From the pair \((s,\phi)\) one can construct the following objects.
For \(\pi \in \Pi_\phi(G)\) denote \(\langle s, \pi \rangle_{\mathfrak{w}}=\tr(\iota_\mathfrak{w}(\pi))(s)\).
On the one hand we have
\[ \Theta^{\mathfrak{w}}_{\phi,s} = \sum_{\pi \in \Pi_\phi(G)} \langle s,\pi \rangle_\mathfrak{w} \Theta_\pi. \]
This is a virtual character on \(G(F)\).
In the case \(s=1\) we introduce the special notation
\[ S\Theta_\phi = \Theta_{\phi,1}^{\mathfrak{w}}. \]
The reason for not recording \(\mathfrak{w}\) in the notation in this case will be explained below.

On the other hand we consider the complex connected reductive subgroup \(\mathcal{H}^0 = \Cent(s,\Ghat)^0\) of \(\Ghat\).
It contains \(\phi(1 \times \SL_2)\) and is normalized by \(\phi(\W_F)\).
Thus \(\mathcal{H} = \mathcal{H}^0 \cdot \phi(\W_F)\) is a subgroup of \({}^L G\), which is an extension \(1 \to \mathcal{H}^0 \to \mathcal{H} \to \W_F \to 1\).
The resulting morphism \(\W_F \to \Out(\mathcal{H}^0)\) factors through the Galois group of a finite extension of \(F\).
By Proposition \ref{pro:inner_forms_brd} there exists a quasi-split connected reductive group \(H\) over \(F\) together with an inner class of isomorphisms \(\eta: \mathcal{H}^0 \simeq \Hhat\) such that the above morphism \(\W_F \to \Out(\mathcal{H}^0)\) and the morphism \(\W_F \to \Out(\Hhat)\) used to define \({}^L H = \Hhat \rtimes \W_F\) correspond to each other via \(\eta\), and for any two such groups \(H_1\) and \(H_2\) we have an isomorphism \(H_1 \simeq H_2\), well-defined up to \(H_{1,\ad}(F)\).
It may unfortunately happen that the two extensions \(\mathcal{H}\) and \({}^L H\) of \(\W_F\) are not isomorphic.
We shall ignore this difficulty, as its resolution is not terribly exciting (see \cite[Lemma 2.2.A]{KottwitzShelstad}).
So let's assume there exists an isomorphism of extensions \({}^L \eta: \mathcal{H} \to {}^L H\).
Then \(\mathfrak{e}=(H,s,{^L\eta})\) is called an extended endoscopic triple\footnote{This terminology relates to the notion of ``endoscopic triple'' of \cite[\S 7]{Kottwitz_STFcusptemp}, ``extended'' meaning that we have chosen an extension \({}^L \eta\) of \(\eta\).
Experts should note that we restrict to the case where \(s\) is invariant under \(\Gamma\), as we may in the local setting.}.
By construction we have a unique Langlands parameter \(\phi_H: \WD_F \to {}^L H\) such that we have \({}^L \eta \circ \phi_H = \phi\).
We have the virtual character \(S\Theta_{\phi_H}\) on \(H(F)\).


The two virtual characters \(\Theta^\mathfrak{w}_{\phi,s}\) and \(S\Theta_{\phi_H}\) are expected to be related by a certain kernel function.
This function, called the Langlands-Shelstad transfer factor, is itself non-conjectural and explicit.
It is a function
\[ \Delta[\mathfrak{w},\mathfrak{e}] : H(F)_{G\mathrm{-sr}} \times G(F)_\mathrm{sr} \to \C \]
whose construction depends on the Whittaker datum and the extended endoscopic triple.
We will not recall the definition of \(\Delta[\mathfrak{w}, \mathfrak{e}]\) (which is rather technical, see \cite{LanglandsShelstad} \cite{KottwitzShelstad} \cite{KottwitzShelstad_corr}), but let us recall what its support is (a correspondence between strongly regular semisimple conjugacy classes in \(G(F)\) and \(G\)-strongly regular semisimple stable conjugacy classes in \(H(F)\)), and recall a meaningful variance property.

\begin{definition}
  Recall that an element of \(G(\ol{F})\) is called strongly regular if its centralizer is a torus.
  Two semisimple strongly regular elements \(\delta, \delta'\) in \(G(F)\) are called stably conjugate if there exists \(g \in G(\ol{F})\) such that \(g \delta g^{-1} = \delta'\). 
\end{definition}

Using maximal tori and identifications of Weyl groups one can define \cite[Theorem 3.3.A]{KottwitzShelstad} a canonical map \(m\) from semisimple conjugacy classes in \(H(\ol{F})\) to semisimple conjugacy classes in \(G(\ol{F})\).
A conjugacy class in \(H(\ol{F})\) is called \(G\)-strongly regular if its image under \(m\) is strongly regular.
We denote by \(H(F)_{G\mathrm{-sr}}\) the set of \(G\)-strongly regular elements of \(H(F)\).
The map \(m\) enjoys the following properties.
\begin{enumerate}
\item The map \(m\) is \(\Gamma\)-equivariant.
\item If \(\gamma \in H(F)\) is semisimple \(G\)-strongly regular with associated \(H(\ol{F})\)-conjugacy class \([\gamma]_{\ol{F}}\) then \(m([\gamma]_{\ol{F}}) \cap G(F)\) is a non-empty\footnote{For non-emptiness the fact that \(G\) is quasi-split is essential.} finite union of \(G(F)\)-conjugacy classes.
  In this situation we say that (the stable conjugacy class of) \(\gamma\) and (the conjugacy class) of \(\delta \in m([\gamma]_{\ol{F}}) \cap G(F)\) \emph{match}.
  Given a strongly regular stable conjugacy class for \(G\), there are finitely many stable conjugacy classes for \(H\) in its preimage by \(m\).
\item For any matching pair \((\gamma, \delta) \in H(F)_{G\mathrm{-sr}} \times G(F)_\mathrm{sr}\), denoting \(T_H = \Cent(\gamma, H)\) and \(T = \Cent(\delta, g)\) (maximal tori of \(H\) and \(G\)), there is a canonical isomorphism \(T_H \simeq T\) identifying \(\gamma\) and \(\delta\).
\end{enumerate}
The fact that \(m\) is defined at the level of conjugacy classes over \(\ol{F}\) rather than \(F\) is one justification for introducing the notion of stable conjugacy.

Let \(\delta\) be a strongly regular element of \(G(F)\), and denote \(T = \Cent(\delta, G)\).
The set of \(G(F)\)-conjugacy classes \([\delta']\) which are stably conjugate to \(\delta\) is parametrized by \(\ker \left( H^1(F, T) \rightarrow H^1(F, G) \right)\), by mapping \(\delta'\) to \(\inv(\delta, \delta') := (\sigma \mapsto \sigma(g)^{-1} g)\) where as above \(g \delta g^{-1} = \delta'\).
Because of this description of stable conjugacy it is desirable to better understand these Galois cohomology sets.
Recall from \cite{Tate_cohtori} that the Tate-Nakayama isomorphism for tori over \(F\) identifies \(H^1(F,T)\) with
\begin{equation} \label{eq:TN_torus_prelim}
  \widehat{H}^{-1}(E/F, X_*(T)) = X_*(T)^{N_{E/F}=0} / I_{E/F} X_*(T)
\end{equation}
where \(E/F\) is any finite Galois subextension of \(\ol{F}/F\) splitting \(T\), \(N_{E/F}\) is the norm map, and for a \(\Z[\Gal(E/F)]\)-module \(Y\) we denote by \(I_{E/F} Y\) the submodule \(\sum_{\sigma \in \Gal(E/F)} (\sigma-1) Y\).
Note that the right-hand side of \eqref{eq:TN_torus_prelim} can also be described as the torsion subgroup of the coinvariants \(X_*(T)_\Gamma\).
Kottwitz interpreted this isomorphism in terms of Langlands dual groups and generalized it to connected reductive groups in \cite{Kottwitz_STFellsing}.
Recall that \(\That\) is a torus over \(\C\) endowed with an isomorphism \(X^*(\That) \simeq X_*(T)\).
Using the exactness of the functor mapping a finitely generated abelian group \(A\) to the diagonalizable group scheme \(Z\) with character group \(A\) (considered as a sheaf on the étale site of \(\C\), say) we see that \(X^*(\That)_\Gamma\) is identified with \(X^*(\That^\Gamma)\).
It follows that the Tate-Nakayama isomorphism may be written as
\begin{equation} \label{eq:TN_torus}
  \alpha_T : H^1(F,T) \simeq \Irr \left( \pi_0(\That^\Gamma) \right).
\end{equation}
It is formal to check that this identification is functorial in \(T\).
As for the Artin reciprocity map it would be just as natural to consider the same isomorphism composed with \(x \mapsto x^{-1}\).

\begin{theorem}[{\cite[Theorem 1.2]{Kottwitz_STFellsing} \cite[Theorem 2.1]{NguyenQuocThang_Galcoh}}] \label{thm:TN_redgps}
  There is a unique extension of the above family of isomorphisms to a family of maps of pointed sets
  \[ \alpha_G: H^1(F, G) \to \Irr \left( \pi_0(Z(\Ghat)^{\Gamma}) \right) \]
  for connected reductive \(G\), ``functorial'' in the following sense.
  For any morphism \(H \to G\) which is either the embedding of a maximal torus in a connected reductive group \(G\) or a central isogeny between connected reductive groups we have a commutative diagram
  \[ \begin{tikzcd}
      H^1(F,H) \arrow[r] \arrow[d, "{\alpha_H}"] & H^1(F,G) \arrow[d, "{\alpha_G}"] \\
      \Irr \left( \pi_0(Z(\Hhat)^{\Gamma}) \right) \arrow[r] & \Irr \left( \pi_0(Z(\Ghat)^{\Gamma}) \right)
    \end{tikzcd} \]
  where the bottom horizontal map is the one induced by the \(\Gamma\)-equivariant map \(Z(\Ghat) \to Z(\Hhat)\) recalled (in both cases) at the end of Section \ref{sec:Lgroups}.
  
  For two connected reductive groups \(G_1\) and \(G_2\) we have \(\alpha_{G_1 \times G_2} = \alpha_{G_1} \times \alpha_{G_2}\).
\end{theorem}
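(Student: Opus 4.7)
My strategy is to reduce the existence and uniqueness of $\alpha_G$ to the torus case, where it is already given by \eqref{eq:TN_torus}. The starting observation is that for any connected reductive $G$ over a local field $F$, every class $\xi \in H^1(F,G)$ lies in the image of $H^1(F,T) \to H^1(F,G)$ for some maximal $F$-torus $T \subset G$. Over $\R$ this follows from the Borel-Serre description of real Galois cohomology via a fundamental torus; over non-Archimedean $F$ it follows from Steinberg's vanishing $H^1(F, G_{\mathrm{sc}}) = 1$ together with a descent to elliptic maximal tori. Granting this and the functoriality for embeddings $T \hookrightarrow G$ required in the theorem, uniqueness of $\alpha_G$ is immediate: the value $\alpha_G(\xi)$ must equal the restriction of $\alpha_T(\xi_T)$ along the $\Gamma$-equivariant inclusion $Z(\Ghat) \hookrightarrow \That$, for any lift $\xi_T \in H^1(F,T)$ of $\xi$.

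For existence one takes this formula as a definition and must check its independence of the auxiliary data $(T, \xi_T)$. Comparing two lifts $(T_1, \xi_1)$ and $(T_2, \xi_2)$ uses two ingredients: all maximal tori of $G$ are conjugate over $\ol{F}$, and representatives of $\xi_1$ and $\xi_2$ differ by a coboundary with values in $G(\ol{F})$. The cleanest route is to first reduce to the case where $G$ has simply connected derived subgroup by choosing a $z$-extension $1 \to Z \to G^\flat \to G \to 1$ with $Z$ an induced torus: then $H^1(F,Z) = 1$ ensures surjectivity of $H^1(F, G^\flat) \to H^1(F,G)$, while the dual map $Z(\Ghat) \to Z(\widehat{G^\flat})$ is compatible with the restriction construction above. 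For such $G^\flat$, the exact sequence $1 \to G^\flat_\der \to G^\flat \to G^\flat/G^\flat_\der \to 1$ together with $H^1(F, G^\flat_\der) = 1$ reduces well-definedness to the known functoriality of the classical Tate-Nakayama isomorphism among tori. This well-definedness check is the principal technical obstacle, since it requires tracking cocycle differences through several torus comparisons.

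The two instances of functoriality then follow. For the embedding $T \hookrightarrow G$, commutativity of the square holds by the very definition of $\alpha_G$. For a central isogeny $\theta: H \to G$, one picks a maximal torus $T_H \subset H$; its image $T \subset G$ is a maximal torus and $T_H \to T$ is itself an isogeny of tori. The map $Z(\Ghat) \to Z(\Hhat)$ factors compatibly through $\That \to \That_H$, and naturality of the surjectivity from tori reduces the commutativity to the already-known functoriality of Tate-Nakayama for the torus isogeny $T_H \to T$. Finally, the product identity $\alpha_{G_1 \times G_2} = \alpha_{G_1} \times \alpha_{G_2}$ reduces to the torus case as well, since maximal tori of a product are products of maximal tori and $Z(\widehat{G_1 \times G_2}) = Z(\widehat{G_1}) \times Z(\widehat{G_2})$ as $\Gamma$-modules.
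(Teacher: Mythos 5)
The paper itself does not reprove this theorem: it cites Kottwitz (for characteristic zero) and Nguyen Quoc Thang (for positive characteristic, using Bruhat--Tits) without reproducing the argument. So you are in effect reconstructing Kottwitz's proof. Your overall strategy — define $\alpha_G$ by lifting to a maximal torus, reduce well-definedness to the case where $G_\der$ is simply connected via $z$-extensions, then invoke the cocenter exact sequence and classical Tate--Nakayama for tori — is indeed the architecture of Kottwitz's argument in the non-Archimedean case, and your uniqueness remark (every class is the image of a class from a maximal torus, so functoriality for torus embeddings forces the value) is the right one.

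There is, however, a concrete gap. Your "cleanest route" paragraph hinges on $H^1(F, G^\flat_\der) = 1$ for simply connected $G^\flat_\der$, which holds over non-Archimedean local fields (Kneser, Bruhat--Tits) but \emph{fails} over $\R$: already $H^1(\R, \mathrm{Spin}(n))$ or $H^1(\R, \mathrm{SU}(n))$ are nontrivial. You flag the Archimedean case at the beginning when asserting surjectivity of $H^1(F,T) \to H^1(F,G)$ from a fundamental torus (Borel--Serre, Shelstad), but the subsequent well-definedness argument — comparing two torus lifts by passing through $G^\flat/G^\flat_\der$ — is then given only in a form valid for $p$-adic $F$. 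Kottwitz treats $\R$ separately, using Shelstad's analysis of Galois cohomology via fundamental tori and Weyl group actions to establish that the restriction of $\alpha_T(\xi_T)$ to $Z(\Ghat)^\Gamma$ is independent of the lift; this requires a genuinely different comparison argument (two lifts to a fundamental torus differ by a real Weyl group element that acts trivially on $Z(\Ghat)$), not a reduction to the abelian quotient. Without this step your construction is not shown to be well defined when $F \simeq \R$, which is part of the stated theorem. A secondary, more cosmetic point: the $z$-extension $G^\flat \to G$ has infinite kernel (an induced torus), so it is not covered by the central-isogeny clause of the functoriality being proved; one must check compatibility with $z$-extensions as an auxiliary fact rather than cite the theorem's own functoriality, and it would be worth making that distinction explicit.
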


In \cite{Kottwitz_STFellsing} this is proved in the case where \(F\) has characteristic zero but the same proof works for all local fields, using Bruhat and Tits' generalization of Kneser's theorem \cite{BruhatTits_galcoh} (see \cite{NguyenQuocThang_Galcoh}).
Kneser's theorem is the special case where \(G\) is semi-simple and simply connected over a \(p\)-adic field, in which case we have \(Z(\Ghat)=1\) and so the theorem says that \(H^1(F,G)\) is trivial.
If \(F\) is non-Archimedean then each \(\alpha_G\) is a bijection, in particular \(H^1(F,G)\) has a commutative group structure.
In the Archimedean case the kernel and image of \(\alpha_G\) are described loc.\ cit.
We will also denote \(\alpha_G(c)(s) = \langle c, s \rangle\).

We resume the above notation: \((H, s, {}^L \eta)\) is an extended endoscopic triple, \((\gamma, \delta) \in H(F)_{G\mathrm{-sr}} \times G(F)_\mathrm{sr}\) is a matching pair, \(T_H = \Cent(\gamma, H)\) and \(T = \Cent(\delta, G)\) and we have a canonical isomorphism \(T_H \simeq T\).
By Theorem \ref{thm:TN_redgps} the kernel of \(H^1(F, T) \rightarrow H^1(F, G)\) is identified with the group of characters of \(\pi_0(\That^{\Gamma})\) which are trivial on \(Z(\Ghat)^{\Gamma}\).
The element \({}^L \eta(s) \in Z(\Hhat)^{\Gamma}\) defines an element \(s_{\gamma, \delta}\) of \(\That_H^{\Gamma} \simeq \That^{\Gamma}\).
We can finally state the variance property of transfer factors: we have
\begin{equation} \label{eq:transfer_fact_equiv} 
  \Delta[\mathfrak{w},\mathfrak{e}](\gamma, \delta') = \Delta[\mathfrak{w},\mathfrak{e}](\gamma, \delta) \langle \inv(\delta, \delta'), s_{\gamma, \delta} \rangle^{-1}.
\end{equation}
As for the Artin reciprocity map and the pairing \eqref{eq:TN_torus} there are several natural normalizations for the transfer factors \cite[\S 4]{KottwitzShelstad_corr}, and for half of these normalizations the exponent \(-1\) on the right-hand side should be removed.
The relation \eqref{eq:transfer_fact_equiv} is far from characterizing \(\Delta[\mathfrak{w},\mathfrak{e}]\) because it does not compare the values at unrelated matching pairs.

\begin{conjecture} \label{conj:endo_char_rel_qs}
  Let \(G\) be a quasi-split connected reductive group over \(F\).
  Let \(\phi: \WD_F \to {}^L G\) be a tempered Langlands parameter.
  \begin{enumerate}
  \item The map \(S\Theta_\phi: G_\rs(F) \to \C\) should be invariant under \emph{stable} conjugacy\footnote{For convenience we only defined stable conjugacy in the strongly regular case, so strictly speaking one should say that the restriction of \(S\Theta_\phi\) to the strongly regular locus should be stable.
    Note that the complement of the strongly regular locus still has measure zero.}.
  \item For any semi-simple \(s \in S_\phi\) and any strongly regular semisimple \(G(F)\)-conjugacy class \([\delta]\) we should have
    \begin{equation} \label{eq:endo_char_rel_qs}
      \Theta_{\phi,s}^{\mathfrak{w}}(\delta)=\sum_{\gamma \in H(F)/\mathrm{st}}		\Delta[\mathfrak{w},\mathfrak{e}](\gamma,\delta)S\Theta_{\phi_H}(\gamma)
    \end{equation}
    where \(((H, s, {}^L \eta), \phi_H)\) is an extended endoscopic triple and Langlands parameter \(\phi_H: \WD_F \to {}^L H\) corresponding to \((\phi,s)\), and \(\mathrm{st}\) denotes stable conjugacy.
  \end{enumerate}
\end{conjecture}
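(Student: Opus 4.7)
The plan is to reduce everything to essentially discrete tempered parameters and then appeal to an explicit construction of the $L$-packet in that case. First, using the Langlands classification on both sides (Theorem~\ref{thm:Langlands_classification}, Lemma~\ref{lem:Langlands_class_abs}, and Section~\ref{sec:parameters_reductions}), together with properties \eqref{it:crude_LL_temp} and \eqref{it:crude_LL_Lclass} of Conjecture~\ref{conj:crude_LLC} and the induced character formula of van~Dijk, I reduce the identities in Conjecture~\ref{conj:endo_char_rel_qs} to the tempered case: the decomposition $\phi = \phi_0 \chi$ of \eqref{eq:param_decomp} satisfies $S_\phi \subseteq S_{\phi_0}$, and the compatibility of Langlands-Shelstad transfer factors with parabolic descent lets one transport the identity for $\phi_0$ to an identity for $\phi$ after parabolic induction and a comparison of stable orbital integrals on the Levi. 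A further descent along the cuspidal support then brings us to essentially discrete tempered $\phi$.

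For essentially discrete tempered $\phi$ trivial on $\SL_2$, under tameness hypotheses on $G$ and $\phi$, the natural approach is Kaletha's construction of supercuspidal $L$-packets: one extracts from $\phi$ a tame elliptic regular pair $(S,\theta)$ producing, via the Yu/Adler construction, a family of supercuspidal representations indexed by characters of $\pi_0(\Sbar_\phi)$, which gives the candidate bijection $\iota_\mathfrak{w}$. The endoscopic identity \eqref{eq:endo_char_rel_qs} is then verified by combining the Adler-DeBacker-Spice character expansion on the regular topologically semi-simple locus near $S$ with the explicit evaluation of $\Delta[\mathfrak{w},\mathfrak{e}]$ on matching stable conjugacy classes, where the geometric and spectral contributions to the transfer factor pair up term by term. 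For essentially discrete $\phi$ with non-trivial $\SL_2$ component, outside $\GL_n$ the only currently viable route is that of Arthur and Mok: one transfers from a twisted $\GL_N$ situation via the stabilized twisted trace formula (stability on $\GL_N$ being essentially automatic) and extracts the local character relations for classical $G$.

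The stability assertion for $S\Theta_\phi$ is the specialization of \eqref{eq:endo_char_rel_qs} at the trivial extended endoscopic triple $\mathfrak{e}=(G,1,\id)$, so it follows from the full endoscopic identity; the suppression of $\mathfrak{w}$ in the notation $S\Theta_\phi$ is justified by the fact that two Whittaker data permute the $\iota_\mathfrak{w}(\pi)$ by a character of $\pi_0(\Sbar_\phi)$ trivial at $s=1$. The main obstacle, and the reason this remains a conjecture in the generality stated, is the absence of any single construction of $\Pi_\phi(G)$ valid for all essentially discrete $\phi$ and all $G$: Kaletha's local construction is restricted to parameters trivial on $\SL_2$ under tameness, while Arthur's global method requires classical groups admitting a twisted endoscopic lift to $\GL_N$. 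A fully general proof would demand either a local construction matching all essentially discrete parameters at once, or a global technique bypassing twisted endoscopy; this is precisely where the geometric parameter of Fargues-Scholze is expected to contribute, although its refinement from the semi-simplified to the full correspondence, and its compatibility with the Whittaker-normalized transfer factors, remains the crucial missing step.
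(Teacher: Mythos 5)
This statement is labeled a \emph{conjecture} in the paper and is stated without proof; the paper offers no argument for it, only a survey of partial results in Section~\ref{sec:known_cases} and a remark immediately following Conjecture~\ref{conj:endo_char_rel_qs}. So there is no ``paper's own proof'' to compare against; what you have written is a survey of strategies, which is a legitimate response but should not be framed as a proof proposal.

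Two specific issues with your sketch. First, the conjecture is already stated for tempered $\phi$, so your opening reduction via the decomposition $\phi = \phi_0\chi$ of \eqref{eq:param_decomp} is not needed here; that decomposition serves to reduce the \emph{crude} correspondence (Conjecture~\ref{conj:crude_LLC}) from general to tempered parameters. Second, and more seriously, you assert that reduction from tempered to essentially discrete tempered $\phi$ is accomplished by ``parabolic descent'' and ``comparison of stable orbital integrals on the Levi,'' but the paper's remark after the conjecture explicitly cautions that, unlike the reduction of Conjecture~\ref{conj:crude_LLC}, the analogous reduction for Conjectures~\ref{conj:refined_LLC_qs} and~\ref{conj:endo_char_rel_qs} ``appear[s] to be more subtle, involving the study of intertwining operators''; it cites \cite{KeysShahidi} as a reference for what is actually involved. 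Packets $\Pi_\phi(G)$ for tempered non-discrete $\phi$ can be reducible parabolic inductions with nontrivial $R$-group structure, and matching this decomposition with the decomposition of $\pi_0(\Sbar_\phi)$-representations, in a way compatible with the endoscopic character relations, is precisely where normalized intertwining operators enter and where your sketch papers over a genuine difficulty. The rest of your survey (Kaletha's supercuspidal construction under tameness, Arthur and Mok via twisted endoscopy, Fargues--Scholze as a prospect) matches the paper's discussion of known cases, and your closing assessment of what is missing is accurate; but it would be more honest to present this as a survey of evidence rather than a proof.
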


\begin{remark}
  \begin{enumerate}
  \item The equation \eqref{eq:endo_char_rel_qs} uniquely determines \(\iota_\mathfrak{w}\) provided it exists, due to the linear independence of characters.
    In particular, one can deduce how \(\iota_\mathfrak{w}\) should depend on \(\mathfrak{w}\).
    Namely, to each pair \(\mathfrak{w}\) and \(\mathfrak{w}'\) one can associate unconditionally a character \((\mathfrak{w},\mathfrak{w'})\) of \(S_\phi\) and then \(\iota_{\mathfrak{w'}}(\pi)=\iota_{\mathfrak{w}}(\pi)\otimes(\mathfrak{w},\mathfrak{w'})\).
    See \cite[\S 3]{Kaletha_genericity} for details.
    In particular, \(\dim(\iota_\mathfrak{w}(\pi))\) is independent of the choice of \(\mathfrak{w}\), and hence \(S\Theta_\phi\) is also independent.
  \item While Conjecture \ref{conj:crude_LLC} readily reduces to the essentially discrete case using Harish-Chandra's work, the putative analogous reductions for Conjectures \ref{conj:refined_LLC_qs} and \ref{conj:endo_char_rel_qs} appear to be more subtle, involving the study of intertwining operators.
    See \cite{KeysShahidi} for character formulas in the case of principal series representations.
  \item Implicit in the conjecture is the fact that the choice of a semisimple \(s\) in its connected component in \(\pi_0(\ol{S}_{\phi})\) is irrelevant.
    One can reduce to the case where \(s\) is ``generic'' (implying that \(\phi_H\) is essentially discrete) by parabolic induction (which behaves well with respect to \(S \Theta\)).
  \item This conjecture reduces the characterization of the local Langlands correspondence to a characterization of the stable distributions \(S \Theta_{\phi}\).
    This is simpler than characterizing individual characters \(\Theta_\pi\) (stable conjugacy classes are essentially parametrized by ``characteristic polynomials'' whereas for conjugacy classes further arithmetic invariants are needed), and in cases where a formula for \(S \Theta_{\phi}\) is known it indeed has a simpler shape.
    Moreover in a global setting when we try to compare trace formulas for different groups in general we can only compare stable conjugacy classes, and so we need to reduce to stable distributions.
\end{enumerate}
\end{remark}

\subsection{Refined Langlands correspondence for non-quasi-split groups}
\label{sec:refined_LLC_inner}

Recall from Proposition \ref{pro:inner_forms_brd} that two connected reductive groups that are inner forms of each other have isomorphic Langlands dual groups, and thus the ``same'' Langlands parameters.
Vogan's idea is to consider the L-packets \(\Pi_\phi(G)\), for a given \(\phi\) and \(G\) varying in an inner class, as one big L-packet \(\Pi_\phi\).
It is natural to take the quasi-split group given in Proposition \ref{pro:inner_forms_brd} as ``base point'' in the inner class because we already have a satisfying conjecture in this case, and for reasons explained below.
So we fix a quasi-split group \(G^*\).
Recall that isomorphism classes of inner twists of \(G^*\) are parametrized by \(H^1(F, G^*_\ad)\).
We may consider the groupoid of triples \((G,\psi,\pi)\) where \((G,\psi)\) is an inner twist of \(G^*\) and \(\pi\) is an irreducible smooth representation of \(G(F)\), with the obvious notion of isomorphism.
The problem with this definition is that for an inner twist \((G, \psi)\) of \(G^*\) its automorphism group in \(\mathcal{IT}(G^*)\) is \(G_\ad(F)\), which acts non-trivially on the set of isomorphism classes of irreducible smooth representations of \(G(F)\).
For example, for \(G = \SL_{2,\R}\) the element \(\operatorname{diag}(-1,1)\) of \(G_\ad(\R) = \mathrm{PGL}_2(\R)\) swaps holomorphic and anti-holomorphic discrete series representations of \(\SL_2(\R)\) of a given weight.
This motivates the introduction of \emph{pure} inner twists: augment the datum \((G, \psi)\) with a \(1\)-cocycle \(z: \Gamma \to G^*(\ol{F})\) lifting
\begin{align*}
  \Gamma & \longrightarrow{} G_\ad(\ol{F}) \\
  \sigma & \longmapsto{} \psi^{-1} \sigma(\psi).
\end{align*}
This effectively solves the above problem but creates a new one because the map \(H^1(F, G^*) \to H^1(F, G^*_\ad)\) is not surjective in general.
For \(F=\R\) Adams, Barbasch and Vogan \cite{AdamsBarbaschVogan} found an ad-hoc generalization of \(Z^1(\R,G^*)\), called strong real forms, that surjects onto \(H^1(\R,G^*_\ad)\).
Kottwitz suggested using his theory of isocrystals with additional structure \cite{Kottwitz_isoc1} \cite{Kottwitz_isoc2} in the case of non-Archimedean fields of characteristic zero as a generalization of \(H^1(F,G^*)\).
This suggestion was implemented completely by Kaletha and will be recalled below, but unfortunately it does not capture all inner forms of a given quasi-split group in general.
Kaletha later introduced another generalization of inner forms, called \emph{rigid} inner forms, for any local field \(F\) of characteristic zero and which captures all inner forms.
Specializing to \(F=\R\) recovers strong real forms.
It turns out that all of these generalizations can be understood as replacing the Galois group \(\Gamma\) (or the étale site of \(\Spec F\)) by an appropriate \emph{Galois gerbe}.
We summarize the three theories (pure, isocrystal and rigid) for a local field \(F\) of characteristic zero below and refer to \cite{Dillery_ri} for Dillery's generalization to functions fields, which uses Čech cohomology instead of Galois cohomology and also provides a more conceptual point of view using actual gerbes.

We thus assume for the rest of this section that \(F\) has characteristic zero.
In characteristic zero and for a commutative band, following \cite{LanglandsRapoport} the aforementioned Galois gerbes may prosaically be defined as group extensions
\[ 1 \to u(\ol{F}) \to \E \to \Gamma \to 1 \]
where \(u\) is a commutative group scheme over \(F\) and the action by conjugation of \(\Gamma\) on \(u(\ol{F})\) coincides with the usual one.
In practice \(u\) is a projective limit of groups \((u_i)_{i \geq 0}\) of multiplicative type and finite type over \(F\) with surjective morphisms between them, and the extension \(\E\) is built from a class in \(H^2_\cont(\Gamma, u(\ol{F}))\) where \(u(\ol{F})\) is endowed with the topology induced by the discrete topology on each \(u_i(\ol{F})\).
Note that we have set-theoretic sections \(\Gamma \to \E\), endowing \(\E\) with a natural topology.
Define \(H^1_\alg(\E, G) \subset H^1_\cont(\E, G(\ol{F}))\) as the subset of classes of \(1\)-cocycles \(\E \to G(\ol{F})\) whose restriction to \(u(\ol{F})\) is given by an algebraic morphism from \(u_{\ol{F}}\) to \(G_{\ol{F}}\).
Define \(H^1_\bas(\E, G) \subset H^1_\alg(\E, G)\) as the set of classes of cocycles for which the algebraic morphism \(u_{\ol{F}} \to G_{\ol{F}}\) takes values in the center \(Z(G)(\ol{F})\).
By the cocycle condition it descends in this case to a morphism \(u \to Z(G)\) defined over \(F\).
Note that such a morphism is induced from a morphism \(u_i \to Z(G)\) for some index \(i\) because the center of \(G\) has finite type over \(F\).
We will also consider, for \(Z\) a subgroup scheme of \(Z(G)\), the subset \(H^1(u \to \E, Z \to G)\) of \(H^1_\bas(\E, G)\) consisting of classes of cocycles whose associated map \(u \to Z(G)\) factors through \(Z\).
Denote by \(Z^1_\alg(\E, G)\), \(Z^1_\bas(\E, G)\) etc.\ the corresponding sets of 1-cocycles \(\E \to G(\ol{F})\).

We consider three cases in parallel.
\begin{enumerate}
\item If we take \(u=1\) we obtain the trivial extension \(\Epur=\Gamma\), recovering the usual Galois cohomology group \(H^1(F,G)\).
\item Consider the pro-torus \(u\) over \(F\) with character group
  \[ X^*(u) =
    \begin{cases}
      \Q & \text{ if } F \text{ is non-Archimedean}, \\
      \frac{1}{2}\Z & \text{ if } F \simeq \R. \end{cases}\]
  (We exclude the case \(F \simeq \C\) here because it is essentially trivial.)
  We have
  \[ H^2_\cont(\Gamma, u(\ol{F})) \simeq
    \begin{cases}
      \widehat{\Z} \otimes_\Z \Q & \text{ if } F \text{ is non-Archimedean}, \\
      \Z/2\Z & \text{ if } F \simeq \R.
    \end{cases} \]
  Let \(\Eiso\) be the extension of \(\Gamma\) by \(u(\ol{F})\) corresponding to the class of \(1\).
\item Consider the pro-finite algebraic group \(u\) over \(F\) with character group \(X^*(u)\) the set of locally constant functions \(f: \Gamma \to \Q/\Z\) satisfying \(\sum_{\sigma \in \Gamma} f(\sigma) = 0\) if \(F\) is Archimedean.
  We have
  \[ H^2_\cont(\Gamma, u(\ol{F})) \simeq
    \begin{cases}
      \widehat{\Z} & \text{ if } F \text{ is non-Archimedean},\\
      \Z/2\Z & \text{ if } F \simeq \R.
    \end{cases} \]
  (As above we exclude the case \(F \simeq \C\).)
  Let \(\Erig\) be the extension of \(\Gamma\) by \(u(\ol{F})\) corresponding to the class \(-1\).
\end{enumerate}

\begin{definition}
  A pure (resp.\ isocrystal, resp.\ rigid) inner twist of \(G\) is a triple \((G',\psi,z)\) where \((G',\psi)\) is an inner twist of \(G\) and \(z \in Z^1_\bas(\E^?, G)\) lifts
  \begin{align*}
    \Gamma & \longrightarrow{} G_\ad(\ol{F}) \\
    \sigma & \longmapsto{} \psi^{-1} \sigma(\psi)
  \end{align*}
  where \(? = \mathrm{pur}\) (resp.\ \(\mathrm{iso}\), resp.\ \(\mathrm{rig}\)).
\end{definition}
In all three cases the automorphism group of \((G',\psi,z)\) is isomorphic to \(G'(F)\).
Isocrystal inner twists are more commonly known as \emph{extended pure inner twists} (since \cite{Kaletha_iso}).

We have the following generalizations of the Tate-Nakayama isomorphisms.

\begin{theorem} \label{thm:TN_iso}
  We have natural maps
  \[ \kappa_G: H^1_\bas(\Eiso, G) \to X^*(Z(\Ghat)^\Gamma) \]
  extending the maps \(\alpha_G\) of Theorem \ref{thm:TN_redgps}, i.e.\ sitting in commutative diagrams
  \[
    \begin{tikzcd}
      H^1(F,G) \arrow[r, "{\alpha_G}"] \arrow[d, hook] & \Irr(\pi_0(Z(\Ghat)^\Gamma)) \arrow[d, hook] \\
      H^1_\bas(\Eiso, G) \arrow[r, "{\kappa_G}"] & X^*(Z(\Ghat)^\Gamma)
    \end{tikzcd}
  \]
  and functorial in \(G\) similarly to Theorem \ref{thm:TN_redgps} (in the case of an inclusion of a maximal torus \(T \subset G\) we have to restrict to elements of \(H^1_\bas(\Eiso, T)\) for which the induced map \(u \to T\) factors through \(Z(G)\)).

  The map \(\kappa_G\) is bijective if \(F\) is non-Archimedean.
\end{theorem}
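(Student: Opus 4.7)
The plan is to follow Kottwitz's strategy from \cite{Kottwitz_isoc1,Kottwitz_isoc2}: treat tori directly, reduce general $G$ to a maximal torus, and use $z$-extensions together with Kneser's theorem to obtain bijectivity over non-Archimedean fields.

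First I would treat a torus $T$. The inflation-restriction sequence attached to $1 \to u(\ol{F}) \to \Eiso \to \Gamma \to 1$ identifies $H^1_\bas(\Eiso, T)$ with $X_*(T)_\Gamma$: the piece $\nu = z|_{u(\ol{F})}$ of a basic cocycle contributes an element of $(X_*(T) \otimes_\Z \Q)^\Gamma$, the Galois part contributes the torsion, and the fact that $\Eiso$ realizes the canonical generator of $H^2_\cont(\Gamma, u(\ol{F}))$ makes the boundary map the correct cup product. In characteristic zero the anti-equivalence between finitely generated abelian groups and diagonalizable group schemes is exact, so $X_*(T)_\Gamma = X^*(\That)_\Gamma = X^*(\That^\Gamma)$, and this provides an isomorphism $\kappa_T$. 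Compatibility with $\alpha_T$ follows from the fact that $H^1(F,T) \hookrightarrow H^1_\bas(\Eiso, T)$ corresponds to the inclusion of the torsion subgroup of $X_*(T)_\Gamma$, which in turn corresponds under Cartier duality to the inclusion $X^*(\pi_0(\That^\Gamma)) \hookrightarrow X^*(\That^\Gamma)$.

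For a general connected reductive $G$ I would reduce to the torus case. Since the restriction of a basic cocycle to $u(\ol{F})$ is a morphism $u \to Z(G)$, it factors through any maximal torus $T$ of $G$, and a standard conjugation argument shows that every basic class is represented by a cocycle valued in $T(\ol{F})$. Define $\kappa_G$ by composing $\kappa_T$ with the map $X^*(\That^\Gamma) \to X^*(Z(\Ghat)^\Gamma)$ induced by the $\Gamma$-equivariant inclusion $Z(\Ghat) \hookrightarrow \That$. The well-definedness, the extension of $\alpha_G$, and the required functoriality properties all then follow from the torus case by standard diagram chases. For bijectivity over non-Archimedean $F$, choose a $z$-extension $1 \to S \to G_1 \to G \to 1$ with $G_{1,\der}$ simply connected and $S$ induced: Kneser's theorem together with $Z(\widehat{G_{1,\der}}) = 1$ (which forces the $\nu$-part of any basic cocycle valued in $G_{1,\der}$ to vanish) gives $H^1_\bas(\Eiso, G_{1,\der}) = 1$, and a five-lemma argument applied to the long exact sequences of the $z$-extension reduces bijectivity of $\kappa_G$ to that of $\kappa_{T_1}$ for a maximal torus $T_1 \subset G_1$, which is already established.

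The main technical obstacle I anticipate is the well-definedness of $\kappa_G$ for a general (possibly non-quasi-split) $G$: given two torus-valued representatives of one basic class, the coboundary between them a priori uses an arbitrary element of $G(\ol{F})$, and one must verify that the resulting value descends from $X^*(\That^\Gamma)$ to $X^*(Z(\Ghat)^\Gamma)$ and is independent of all choices. This is the same difficulty Kottwitz confronts in the purely Galois setting; its resolution rests on a careful analysis of how conjugation by $G(\ol{F})$ moves basic cocycles between distinct maximal tori, together with the observation that the characters of $Z(\Ghat)^\Gamma$ (rather than $\That^\Gamma$) are exactly those canonically invariant under this ambiguity.
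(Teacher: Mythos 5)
The reduction to a maximal torus in your definition of $\kappa_G$ is where the proposal breaks down. You write that since the $\nu$-part of a basic cocycle factors through $Z(G) \subset T$ ``a standard conjugation argument shows that every basic class is represented by a cocycle valued in $T(\ol{F})$''; but centrality of the $\nu$-part constrains only the restriction to $u(\ol{F})$ and says nothing about the values on lifts of $\Gamma$, and the surjectivity claim is simply false for a general maximal torus. Already for pure classes ($\nu = 0$) it fails: take $G = \mathrm{PGL}_n$ over a $p$-adic field and $T$ the split diagonal torus. Then $H^1(F,T) = 1$ by Hilbert~90, while $H^1(F, \mathrm{PGL}_n) \simeq \mathrm{Br}(F)[n] \simeq \Z/n\Z$ is nontrivial, so not every pure class is represented by a cocycle valued in $T(\ol{F})$. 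If one instead restricts to a fundamental (elliptic) maximal torus of a quasi-split inner form the surjectivity becomes true, but then the well-definedness problem you flag at the end becomes the entire content of the theorem and is not resolved here; moreover such a torus need not sit inside the given possibly non-quasi-split $G$.

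The route the paper actually takes (following Kottwitz's $B(G)$ paper, as the parenthetical remark after the theorem statement spells out) avoids maximal tori altogether in the definition step. One defines $\kappa_T$ for tori, then for $G$ with $G_\der$ simply connected one uses the abelianization $G \onto D := G/G_\der$: since $Z(G_\der)$ is finite, $\Hom(u, Z(G_\der)) = 0$, so $H^1_\bas(\Eiso, G_\der) = H^1(F, G_\der)$, which vanishes by Kneser's theorem; this forces $H^1_\bas(\Eiso, G) \hookrightarrow H^1_\bas(\Eiso, D)$, and since $\widehat{D} = Z(\Ghat)$ one sets $\kappa_G := \kappa_D$ on the image. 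For general $G$ one pulls back along a $z$-extension $1 \to S \to G_1 \to G \to 1$ with $G_{1,\der}$ simply connected and $S$ induced. Your $z$-extension argument for bijectivity is essentially the right one (and the observation that $\Hom(u, Z(G_{1,\der})) = 0$ because $Z(G_{1,\der})$ is finite is the correct point, though you attributed it to $Z(\widehat{G_{1,\der}}) = 1$); the fix is to use this same abelianization-plus-$z$-extension mechanism to \emph{define} $\kappa_G$ as well, rather than only to prove bijectivity. Your treatment of the torus case via the inflation-restriction sequence and the transgression by the class of $\Eiso$ is fine as far as it goes and matches the source.
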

\begin{proof}
  See \cite[Proposition 13.1 and Proposition 13.4]{Kottwitz_BG} and \cite[\S 3.1]{Kaletha_rivsbg}.
\end{proof}

For a connected reductive group \(G\) over \(F\) and a finite central subgroup scheme \(Z\) denote \(\ol{G} = G/Z\).
We have a dual isogeny \(\widehat{\ol{G}} \to \Ghat\); denote by \(Z(\widehat{\ol{G}})^+\) be the preimage of \(Z(\Ghat)^\Gamma\) in \(Z(\widehat{\ol{G}})\).
\begin{theorem}[{\cite[Corollary 5.4]{Kaletha_ri}}] \label{thm:TN_rig}
  We have natural maps
  \[ H^1(u \to \Erig, Z \to G) \to X^*(Z(\widehat{\ol{G}})^+) \]
  extending the maps \(\alpha_G\) and functorial in \(Z \to G\) as in Theorem \ref{thm:TN_redgps}.

  These maps are bijective in the non-Archimedean case.
\end{theorem}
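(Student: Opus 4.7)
The plan is to follow Kottwitz's proof of Theorem \ref{thm:TN_redgps}, enlarging the cohomology source to accommodate the non-trivial $u$-part of the gerbe $\Erig$. I would first handle the case where $G = T$ is a torus, then pass to arbitrary connected reductive groups by reduction to a maximal torus, and finally address the bijectivity in the non-Archimedean case using Kneser's theorem.

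For the torus case, fix a torus $T$ over $F$ containing $Z$ and set $\ol{T} = T/Z$. A class in $H^1(u \to \Erig, Z \to T)$ is represented by a $1$-cocycle $z: \Erig \to T(\ol{F})$ whose restriction to $u(\ol{F})$ is induced from a morphism $u \to Z$ of $F$-group schemes. The long exact sequence associated with $1 \to Z \to T \to \ol{T} \to 1$ and the Hochschild--Serre filtration for $1 \to u \to \Erig \to \Gamma \to 1$ together fit $H^1(u \to \Erig, Z \to T)$ into an exact sequence involving $\Hom_F(u, Z)$ and $H^1(F, \ol{T})$, whose boundaries are controlled by the defining class $\xi \in H^2(\Gamma, u(\ol{F}))$ of $\Erig$. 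Pairing with $\xi$, combined with the classical Tate-Nakayama isomorphism $\alpha_{\ol{T}}$ from Theorem \ref{thm:TN_redgps} and the manifest pairing between $\Hom_F(u, Z)$ and the character group of $\widehat{Z}$, produces a canonical map to $X^*(\widehat{\ol{T}}^+)$. Compatibility with $\alpha_T$ on the cocycles trivial on $u$ (which form a copy of $H^1(F, T)$ inside the enlarged set) is then a diagram chase.

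To pass to an arbitrary connected reductive $G$, the key step is to show that every class $[z] \in H^1(u \to \Erig, Z \to G)$ is represented by a cocycle taking values in some maximal torus $T \subset G$ containing $Z$. Since $z|_u$ is algebraic with image in $Z \subset Z(G)$, the values of $z$ commute with some $\Gamma$-stable maximal torus, and a Steinberg-type argument as in the proof of Theorem \ref{thm:TN_redgps} produces the required representative. Two such tori differ by $G(\ol{F})$-conjugation, and the dual restriction $\widehat{\ol{T}}^+ \to Z(\widehat{\ol{G}})^+$ kills the resulting ambiguity since $Z(\widehat{\ol{G}})^+$ lies in the Weyl-invariant part of $\widehat{\ol{T}}^+$. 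Naturality in $Z \to G$ is then automatic.

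The main obstacle is the non-Archimedean bijectivity. For surjectivity, given $\chi \in X^*(Z(\widehat{\ol{G}})^+)$, I would first replace $G$ by a $z$-extension $G' \twoheadrightarrow G$ so that $G'_\der$ is simply connected, then realize $\chi$ as the image of a class on a suitable maximal torus (using the torus case), and lift to $G'$ using Kneser's vanishing $H^1(F, G'_\der) = 1$ to control the fiber. For injectivity, two cocycles inducing the same character on $Z(\widehat{\ol{G}})^+$ are shown to be cohomologous by again reducing to a maximal torus, where $\kappa_T$ is bijective in the non-Archimedean setting, and handling the residual fiber via simply-connected cohomology vanishing. The delicate point throughout is coherently tracking the factorization $u \to Z$ under the $z$-extension and under the change of maximal torus, so that the constructions assemble into a single well-defined bijection.
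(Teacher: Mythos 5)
The paper gives no proof, deferring entirely to \cite[Corollary 5.4]{Kaletha_ri}, so the comparison is against Kaletha's argument. Your central reduction --- ``since $z|_u$ is algebraic with image in $Z \subset Z(G)$, the values of $z$ commute with some $\Gamma$-stable maximal torus'' --- is incorrect. A cocycle $z \in Z^1(u \to \Erig, Z \to G)$ is constrained only in that $z|_{u(\ol{F})}$ comes from an algebraic homomorphism $u_{\ol{F}} \to Z_{\ol{F}} \subset Z(G)_{\ol{F}}$; the remaining values $z(w)$, for $w \in \Erig$ lying above nontrivial elements of $\Gamma$, are unconstrained elements of $G(\ol{F})$ beyond the cocycle identity, and since the center is fixed by conjugation, the centrality of $z|_u$ imposes no compatibility condition on those values. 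There is no reason they should commute with, or be simultaneously conjugable into, any maximal torus. That every class in $H^1(u \to \Erig, Z \to G)$ does lift from a maximal torus is true (and is proved in \cite{Kaletha_ri}), but it is a genuine theorem that already requires the Tate--Nakayama machinery you are trying to build; invoking it here as a formal consequence of the centrality of $z|_u$ is circular.

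This is also not the route the proof takes. Following Kottwitz's template for $\alpha_G$ (from which Theorem \ref{thm:TN_redgps} arises), Kaletha first settles the torus case by an explicit finite-level computation of the gerbe cohomology for the multiplicative quotients $u_i$, filtering the cocycle complex and passing to the limit --- considerably more delicate than a formal ``pairing with $\xi$'' --- then treats $G$ with $G_\der$ simply connected via the quotient torus $G/G_\der$ together with Kneser's theorem (or its Archimedean analogue), and finally handles arbitrary $G$ by $z$-extensions. The statement that every class lifts from a maximal torus is a downstream corollary of this construction, not its organizing reduction. Your $z$-extension and Kneser ideas for bijectivity are the right ingredients, but the construction needs to be reorganized along these lines to avoid the gap above.
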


We also have natural maps \(H^1(u \to \E^?, Z \to G) \to H^1(F, G/Z)\), and the above generalizations of the Tate-Nakayama morphism are also compatible with \(\alpha_{G/Z}\).
One can deduce that the maps \(H^1_\bas(\Eiso, G) \to H^1(F, G/Z(G)^0)\) and
\[ H^1(u \to \E^\mathrm{rig}, Z(G_\der) \to G) \to H^1(F, G_\ad) \]
are both surjective.
In particular all inner forms can be realized as rigid inner twists, or as isocrystal inner twists if the center of \(G\) is connected.
In general not all inner forms can be realized as isocrystal inner twists, e.g.\ when \(G\) is split semisimple but not adjoint.

There is \cite[\S 3.3]{Kaletha_rivsbg} a natural map of extensions \(\Erig \to \Eiso\), inducing \(H^1_\bas(\Eiso, G) \to H^1_\bas(\Erig, G)\) for any group \(G\).
The relation with Theorems \ref{thm:TN_iso} and \ref{thm:TN_rig} is not so obvious, see Proposition 3.3 loc.\ cit.

To simplify the notation for \(z \in Z^1_\bas(\E, G)\) we denote by \((G_z, \psi_z)\) the associated inner twist of \(G\).

\begin{conjecture} \label{conj:refined_LLC_inner}
  Let \(G^*\) be a quasi-split connected reductive group over \(F\).
  Let \(\mathfrak{w}\) be a Whittaker datum for \(G^*\).
  Let \(\phi: \WD_F \to {}^L G^*\) be a tempered Langlands parameter.
  Let \(? \in \{\mathrm{pur},\mathrm{iso},\mathrm{rig}\}\).
  Define \(\Pi_\phi^?\) as the set of isomorphism classes of pairs \((z,\pi)\) where \(z \in Z^1_\bas(\E^?, G^*)\) and \(\pi \in \Pi_\phi(G^*_z)\).
  Define
  \begin{enumerate}
  \item \(Z^\mathrm{pur} = 1\), \(\mathcal{S}_\phi^\mathrm{pur} = \pi_0(S_\phi)\) and \(\mathcal{Z}^\mathrm{pur} = \pi_0(Z(\Ghat)^\Gamma)\),
  \item \(Z^\mathrm{iso} = Z(G)^0\), \(\mathcal{S}_\phi^\mathrm{iso} = S_\phi / (S_\phi \cap \Ghat_\der)^0\) and \(\mathcal{Z}^\mathrm{iso} = Z(\Ghat)^\Gamma\),
  \item \(Z^\mathrm{rig}\) is any finite subgroup scheme of \(Z(G)\), \(\mathcal{S}_\phi^\mathrm{rig} = \pi_0(S_\phi^+)\) where \(S_\phi^+\) is the preimage of \(S_\phi\) in \(\widehat{\ol{G}}\) and \(\mathcal{Z}^\mathrm{rig} = \pi_0(Z(\widehat{\ol{G}})^+)\).
  \end{enumerate}
  There should exist a bijection \(\iota_\mathfrak{w}\) making the following diagram commutative.
  \[
    \begin{tikzcd}
      \Pi_\phi^? \arrow[r, "{\iota_\mathfrak{w}}" above, "{\sim}" below] \arrow[d] & \Irr(\mathcal{S}_\phi^?) \arrow[d] \\
      H^1(u \to \E^?, Z^? \to G^*) \arrow[r] & X^*(\mathcal{Z}^?)
    \end{tikzcd}
  \]
  Here the left vertical map is induced by the forgetful map \((z,\pi) \mapsto z\), the right vertical map is induced by the obvious map \(\mathcal{Z}^? \to \mathcal{S}_\phi^?\) and the bottom horizontal map is given by Theorem \ref{thm:TN_redgps} (resp.\ \ref{thm:TN_iso}, resp.\ \ref{thm:TN_rig}).
\end{conjecture}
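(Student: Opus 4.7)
The plan is to bootstrap from the quasi-split case (Conjecture \ref{conj:refined_LLC_qs} together with Conjecture \ref{conj:endo_char_rel_qs}), which I will assume throughout, and to use endoscopic character relations to both define and characterize the map \(\iota_\mathfrak{w}\). First I would reduce to the tempered (indeed, essentially discrete) case using the Langlands classification on both sides: property \eqref{it:crude_LL_Lclass} of Conjecture \ref{conj:crude_LLC}, Lemma \ref{lem:conj_Lemb_Levi} and the decomposition \eqref{eq:param_decomp} together show that for a Levi \(M\) of \(G^*\) and a tempered parameter \(\phi_M : \WD_F \to {}^L M\) extended to \(\phi = \iota_M \circ \phi_M\), one has \(S_\phi \subset \iota_M({}^L M)\) up to conjugacy, so \(\mathcal{S}_\phi^?\) is naturally identified with \(\mathcal{S}_{\phi_M}^?\) for \(M\) (with the corresponding subgroup \(Z^?\)). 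Parabolic induction from inner twists of \(M\) inside inner twists of \(G^*\) (using that rigid/isocrystal/pure cocycles for \(M\) are special cases of those for \(G^*\)) should produce the desired bijection on \(\Pi_\phi^?\), the commutativity in the diagram following from functoriality of the Tate-Nakayama maps (Theorems \ref{thm:TN_redgps}, \ref{thm:TN_iso}, \ref{thm:TN_rig}).

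So assume \(\phi\) is essentially discrete. The strategy is to \emph{define} \(\iota_\mathfrak{w}\) via extended endoscopic character relations. For a semisimple \(s \in S_\phi^+\) (using the rigid version as the prototype, with obvious adjustments in the other cases) one obtains an extended endoscopic datum \(\mathfrak{e} = (H, s, {}^L \eta)\) together with a lift \(\phi_H : \WD_F \to {}^L H\) of \(\phi\). By the quasi-split case applied to \(H\), the stable virtual character \(S\Theta_{\phi_H}\) is unconditionally defined. For each \(z \in Z^1(u \to \Erig, Z^\mathrm{rig} \to G^*)\), fix a transfer factor \(\Delta[\mathfrak{w}, \mathfrak{e}, z]\) for the inner twist \((G^*_z, \psi_z)\) normalized by the Whittaker datum \(\mathfrak{w}\) (Kaletha's construction extends \(\Delta[\mathfrak{w},\mathfrak{e}]\) to rigid inner twists, with variance under stable conjugacy given by a natural pairing \(\langle \inv(\delta,\delta'), s_{\gamma,\delta}\rangle\) valued in \(\pi_0(\widehat{\overline{T}}^+)\) generalizing \eqref{eq:transfer_fact_equiv}). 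One then \emph{defines}
\[ \Theta^{\mathfrak{w}}_{\phi, s, z}(\delta) = e(G^*_z) \sum_{\gamma \in H(F)/\mathrm{st}} \Delta[\mathfrak{w}, \mathfrak{e}, z](\gamma, \delta) \, S\Theta_{\phi_H}(\gamma) \]
on \((G^*_z)_\rs(F)\), where \(e(G^*_z)\) is the Kottwitz sign. The variance of \(\Delta[\mathfrak{w}, \mathfrak{e}, z]\) under \(z\) should exactly cancel with the character of \(z\) under the map \(\mathcal{Z}^\mathrm{rig} \to \mathcal{S}_\phi^\mathrm{rig}\), making \((z, s) \mapsto \Theta^{\mathfrak{w}}_{\phi, s, z}\) equivariant in the way required by the bottom square of the diagram.

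Next I would apply Fourier inversion over the finite group \(\mathcal{S}_\phi^?\). The recipe above, combined with the evident relation \(\Theta^{\mathfrak{w}}_{\phi, zs, z}(\delta) = \langle z, s \rangle \Theta^{\mathfrak{w}}_{\phi, s, z}(\delta)\) for \(z \in \mathcal{Z}^?\), yields for each \(\rho \in \Irr(\mathcal{S}_\phi^?)\) a virtual character
\[ \Theta_{\phi, \rho} = \frac{\dim \rho}{|\mathcal{S}_\phi^?|} \sum_{s \in \mathcal{S}_\phi^?} \overline{\tr \rho(s)} \, \Theta^{\mathfrak{w}}_{\phi, s, z(\rho)} \]
on the unique inner twist \(G^*_{z(\rho)}\) whose cohomology class maps to the central character of \(\rho\) on \(\mathcal{Z}^?\). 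The bijection \(\iota_\mathfrak{w}\) is then determined by the requirement that \(\Theta_{\phi, \iota_\mathfrak{w}(z, \pi)} = \Theta_\pi\), provided \(\Theta_{\phi, \rho}\) is always \(\pm\) an irreducible character, and provided these exhaust \(\Pi_\phi^?\).

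The main obstacle — and the reason this is still a conjecture in general — is proving that each \(\Theta_{\phi, \rho}\) is indeed (up to sign) the Harish-Chandra character of an irreducible tempered representation of \(G^*_{z(\rho)}(F)\), and that every tempered \(\pi\) on every inner twist arises in this way. This is essentially a problem of inverting the endoscopic transfer and requires, at a minimum, (i) existence of the spectral transfer identifying \(S\Theta_{\phi_H}\) on the endoscopic side with a geometric stable distribution on \(H(F)\) whose transfer to \(G^*_z(F)\) lands in the span of tempered characters, and (ii) a global input (stabilization of the trace formula, or a suitable local analogue) to separate the resulting characters and check their positivity. For non-Archimedean \(F\) a counting argument using Theorem \ref{thm:TN_iso}/\ref{thm:TN_rig} should then give bijectivity: summing \(|\Pi_\phi(G^*_z)|\) over \(z\) weighted by \(|Z(G^*)^{\Gamma}|/|\mathrm{Aut}|\) factors matches \(|\Irr(\mathcal{S}_\phi^?)|\) by a Fourier-type identity on \(\pi_0(Z(\widehat{\overline{G^*}})^+)\). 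The compatibility with the quasi-split case \((z=1)\) and with parabolic induction, together with Shahidi's conjecture (Conjecture \ref{conj:shahidi}) anchoring the normalization, would pin down \(\iota_\mathfrak{w}\) uniquely.
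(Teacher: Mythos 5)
This statement is a \emph{conjecture}, and the paper does not prove it; it is stated as an open problem, with partial results for specific groups surveyed in Section~\ref{sec:known_cases}. There is therefore no ``paper's own proof'' to compare against, and your text is best read as a program rather than a proof. That said, the program you outline --- define \(\iota_\mathfrak{w}\) by imposing the endoscopic character relations of Conjecture~\ref{conj:endo_char_rel_inner} and then Fourier-inverting over \(\mathcal{S}_\phi^?\) --- is indeed the standard framework, and you are right that the substantive content is showing that the resulting virtual characters are (up to sign) genuine irreducible tempered characters and that they exhaust \(\Pi_\phi^?\). Your honest acknowledgement that this is ``the reason this is still a conjecture'' is the correct attitude.

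However, your opening reduction contains a concrete error. You claim that for \(\phi = \iota_M \circ \phi_M\) with \(\phi_M\) tempered and discrete for a Levi \(M\), one has \(S_\phi \subset \iota_M({}^L M)\) up to conjugacy, hence \(\mathcal{S}_\phi^? \simeq \mathcal{S}_{\phi_M}^?\). This is false in general: \(\Cent(\phi, \Ghat)\) can be strictly larger than \(\Cent(\phi_M, \Mhat)\). (Already for \(\GL_n\) with \(\phi = \phi_1 \oplus \phi_1\) and \(\phi_1\) discrete, the centralizer is a \(\GL_2\) not contained in any proper Levi; for other groups this discrepancy is exactly what the Knapp--Stein R-group measures.) The paper explicitly flags this in the remark following Conjecture~\ref{conj:endo_char_rel_qs}: ``the putative analogous reductions for Conjectures~\ref{conj:refined_LLC_qs} and~\ref{conj:endo_char_rel_qs} appear to be more subtle, involving the study of intertwining operators.'' So the parabolic-induction reduction to the discrete case is itself a nontrivial ingredient of the conjecture, not a free simplification, and your sketch elides exactly the part where the internal structure of L-packets of non-discrete tempered parameters (via R-groups and normalized intertwining operators) has to be matched with \(\Irr(\mathcal{S}_\phi^?)\).

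A smaller point: the ``counting argument'' you invoke at the end would at best give cardinality matching for non-Archimedean \(F\); it says nothing about picking out the \emph{correct} bijection, and in the Archimedean case the Tate--Nakayama maps of Theorems~\ref{thm:TN_iso} and~\ref{thm:TN_rig} are not bijective, so the argument as stated does not apply. The normalization via Shahidi's Conjecture~\ref{conj:shahidi} (i.e., the \(\mathfrak{w}\)-generic member of the quasi-split packet going to the trivial character) is indeed the right anchor, but it fixes the correspondence only for \(z=1\) and does not by itself propagate to general \(z\) without the full force of the character relations.
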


The relation with Conjecture \ref{conj:crude_LLC} is that for any \(z \in Z^1(u \to \E^?, Z^? \to G^*)\) we should have \(\Pi_\phi(G_z^*) = \{ \pi \,|\, (z,\pi) \in \Pi_\phi^? \}\).

As for Conjectures \ref{conj:refined_LLC_qs} and \ref{conj:endo_char_rel_qs}, the map \(\iota_\mathfrak{w}\) in Conjecture \ref{conj:refined_LLC_inner} should be characterized by endoscopic character relations.
In order to state these relations we need normalized transfer factors, which are the values of a function defined on the set of matching pairs of strongly regular elements.
The definition of this function up to multiplication by a constant has been known for some time \cite{LanglandsShelstad}, but removing ``up to a constant'' (this is the meaning of ``normalized'') is a relatively recent achievement for non-quasi-split groups.
Their definition was suggested by Kottwitz and established by Kaletha in the case of pure inner forms \cite[\S 2.2]{Kaletha_endocharid_depth0} and extended to the isocrystal and rigid case by Kaletha \cite{Kaletha_iso} \cite{Kaletha_ri}.

Let \((G,\psi,z)\) be a pure/isocrystal/rigid inner twist of \(G^*\) and \(\phi: \WD_F \to {}^L G\)  a tempered Langlands parameter.
Consider a semi-simple \(s \in S_\phi\) if \(? \in \{\mathrm{pur},\mathrm{iso}\}\) or \(s \in S_\phi^+\) if \(?=\mathrm{rig}\).
As in Section \ref{sec:refined_LLC_qs} we obtain an extended endoscopic triple\footnote{A refined one in the rigid case, i.e.\ \(s\) belongs to the cover \(\widehat{\ol{G}}\) of \(\Ghat\).} \(\mathfrak{e} = (H,s,{}^L \eta)\) and a tempered Langlands parameter \(\phi_H:\WD_F \to {}^L H\).
Consider matching strongly regular \(\gamma \in H(F)\) and \(\delta \in G(F)\).
Using Steinberg's theorem \cite[Theorem I.7]{Steinberg_reg} \cite[\S 8.6]{BorelSpringer_ratprop2} we see that for any strongly regular \(\delta \in G(F)\) there exists \(\delta^* \in G^*(F)\) stably conjugate to \(\delta\), i.e.\ for which there exists \(g \in G^*(\ol{F})\) satisfying \(\psi(g^{-1} \delta^* g) = \delta\).
Clearly \(\delta^*\) is also strongly regular; denote its centralizer in \(G^*\) by \(T^*\).
For \(w \in \E^?\) we have \(g z_w w(g)^{-1} \in T^*(\ol{F})\) essentially because \(\delta^*\) and \(\delta\) are fixed by \(w\).
We denote by \(\inv[\psi,z](\delta^*, \delta) \in H^1(u \to \E^?, Z^? \to T^*)\) the class of the cocycle \(w \mapsto g z_w w(g)^{-1}\).
It does not depend on the choice of \(g\).
Similarly to the quasi-split case we can associate \(s_{\gamma,\delta^*} \in \widehat{T^*}^\Gamma\) (resp.\ \(\widehat{T^*}^\Gamma\), resp.\ \(\widehat{\ol{T^*}}^+\)) to \(s\) and the matching pair \((\gamma,\delta^*)\), and pair it with \(\inv(\delta^*, \delta)\) using Theorem \ref{thm:TN_redgps} (resp.\ \ref{thm:TN_iso}, resp.\ \ref{thm:TN_rig}).
In analogy with \eqref{eq:transfer_fact_equiv} define
\[ \Delta[\mathfrak{w}, \mathfrak{e}, \psi, z](\gamma, \delta) = \Delta[\mathfrak{w}, \mathfrak{e}](\gamma, \delta^*) \langle \inv(\delta^*, \delta), s_{\gamma,\delta^*} \rangle^{-1}. \]
It turns out that this is well-defined, i.e.\ the right-hand side does not depend on the choice of \(\delta^*\), and this defines a normalization of transfer factors for \((H,s,{}^L \eta)\).
Again there are several natural normalizations and in half of these normalizations the exponent \(-1\) should be removed.

We can now formulate the generalization of Conjecture \ref{conj:endo_char_rel_qs}.
As in Section \ref{sec:refined_LLC_qs} we abbreviate \(\langle s, \pi \rangle_{\mathfrak{w},z} = \tr \iota_\mathfrak{w}(z,\pi)(s)\) and define
\[ \Theta^{\mathfrak{w},z}_{\phi,s} = e(G_z) \sum_{\pi \in \Pi_\phi(G_z)} \langle s,\pi \rangle_{\mathfrak{w},z} \Theta_\pi \]
where \(e(G_z)\) is the sign defined by Kottwitz \cite{Kottwitz_sign}.

\begin{conjecture} \label{conj:endo_char_rel_inner}
  In the setting of Conjecture \ref{conj:refined_LLC_inner}, for any \(z \in Z^1(u \to \E^?, Z^? \to G^*)\), any strongly regular \(G_z(F)\)-conjugacy class \([\delta]\) and any semi-simple \(s \in S_\phi\) (resp.\ \(S_\phi\), resp.\ \(S_\phi^+\)) we should have
  \[ \Theta_{\phi,s}^{\mathfrak{w},z}(\delta) = \sum_{\gamma \in H(F)/st} \Delta[\mathfrak{w},\mathfrak{e},\psi,z](\gamma,\delta) S\Theta_{\phi_H}(\gamma). \]
\end{conjecture}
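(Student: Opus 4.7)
The plan is to bootstrap from the quasi-split case (Conjectures \ref{conj:refined_LLC_qs} and \ref{conj:endo_char_rel_qs}) and work by induction on the semisimple rank. Throughout I fix a Whittaker datum \(\mathfrak{w}\) for the quasi-split form \(G^*\), a rigid inner twist \((G_z, \psi_z)\) coming from \(z \in Z^1(u \to \Erig, Z \to G^*)\), and a tempered parameter \(\phi\). Using the ``Langlands classification for parameters'' of Section \ref{sec:parameters_reductions} together with properties \eqref{it:crude_LL_temp}--\eqref{it:crude_LL_Lclass} of Conjecture \ref{conj:crude_LLC}, and the fact that parabolic induction intertwines the construction of \(S\Theta\) with the passage to Levi subgroups (a Langlands--Shelstad compatibility for transfer factors), one reduces to the case where \(\phi\) is essentially discrete and \(s \in S_\phi^+\) has centralizer in \(\Ghat\) equal to a proper Levi only if \(G\) admits a corresponding Levi subgroup, so that we need only check the identity at the ``elliptic'' locus.

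Next I would define, for each \(s\), a candidate invariant distribution on \(G_z(F)\) by the prescribed right-hand side
\[ D_{\phi,s}^{\mathfrak{w},z}(\delta) := \sum_{\gamma \in H(F)/\mathrm{st}} \Delta[\mathfrak{w},\mathfrak{e},\psi,z](\gamma,\delta)\, S\Theta_{\phi_H}(\gamma), \]
where the stable distributions \(S\Theta_{\phi_H}\) are supplied by the inductive hypothesis on endoscopic groups (with the base case, \(s \in Z(\widehat{\ol{G^*}})^+\), corresponding to \(H = G^*\) quasi-split). The first technical step is to show that \(D_{\phi,s}^{\mathfrak{w},z}\) is a well-defined invariant distribution on \(G_z(F)\), supported on \(G\)-strongly regular elements up to a boundary of measure zero. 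This reduces to the variance property of the normalized transfer factors \(\Delta[\mathfrak{w},\mathfrak{e},\psi,z]\) from Section \ref{sec:refined_LLC_inner} together with stability of \(S\Theta_{\phi_H}\); the relevant identity is a non-quasi-split generalization of \eqref{eq:transfer_fact_equiv} and appears in \cite{Kaletha_ri}.

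The heart of the argument is then the spectral decomposition: prove that each \(D_{\phi,s}^{\mathfrak{w},z}\) is a finite integral combination of characters \(\Theta_\pi\) of essentially square-integrable irreducible smooth representations \(\pi\) of \(G_z(F)\), and that the set of \(\pi\) occurring depends only on \(\phi\) (not on \(s\)), giving the packet \(\Pi_\phi(G_z)\). My plan is to apply Fourier inversion on the finite group \(\mathcal{S}_\phi^{\mathrm{rig}} = \pi_0(S_\phi^+)\) to the family \(\{D_{\phi,s}^{\mathfrak{w},z}\}_s\): summing \(\tr \rho(s) \cdot D_{\phi,s}^{\mathfrak{w},z}\) over representatives \(s\) of the conjugacy classes in \(\mathcal{S}_\phi^{\mathrm{rig}}\) should, for each \(\rho \in \Irr(\mathcal{S}_\phi^{\mathrm{rig}})\), isolate a single character \(\Theta_\pi\) weighted by \(\dim \rho\), thereby defining the bijection \(\iota_{\mathfrak{w}}\) of Conjecture \ref{conj:refined_LLC_inner}. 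The analytic input needed is the stable local trace formula and its endoscopic stabilization, combined with Waldspurger's tempered Plancherel decomposition \cite{WaldspurgerPlancherel} to control which irreducibles can contribute. Kottwitz's sign \(e(G_z)\), built into \(\Theta_{\phi,s}^{\mathfrak{w},z}\), accounts for the discrepancy between elliptic orbital integral transfers and the spectral side for non-quasi-split inner forms.

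The main obstacle is precisely this inversion: one must know \emph{a priori} that the \(D_{\phi,s}^{\mathfrak{w},z}\) are tempered virtual characters with \emph{non-negative integer} multiplicities whose support is contained in a common finite set, and that this set is stable under the involution \(\pi \mapsto \tilde{\pi}\) in the expected way. This is the local counterpart of Arthur's ``multiplicity formula'' and has no purely formal proof; it requires simultaneously the stabilization of the elliptic part of the local trace formula (over characteristic zero local fields) and the endoscopic transfer theorem, which must be established in a carefully organized induction on \(\dim G\) alongside the character identities themselves. The construction of supercuspidal \(L\)-packets in \cite{Kaletha_scuspL} provides the starting point of the induction, supplying the packets and the maps \(\iota_\mathfrak{w}\) for parameters trivial on \(\SL_2\); the induction propagates these through parabolic induction and endoscopic transfer to all tempered parameters.
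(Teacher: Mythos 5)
You are attempting to ``prove'' a statement that the paper states as a conjecture, not a theorem: Conjecture~\ref{conj:endo_char_rel_inner} has no proof in the paper, and indeed it is open in general (the paper's Section~\ref{sec:known_cases} surveys the special cases in which it is known, all by heavy and case-specific machinery). What you have written is a research program sketch, not a proof, and you acknowledge as much in your final paragraph. Two structural points make the gap concrete rather than merely a matter of missing detail. First, the entire scheme is conditional on Conjecture~\ref{conj:endo_char_rel_qs} for \emph{all} quasi-split groups (the endoscopic groups $H$, and $G^*$ itself as the base case $s=1$), which is itself open; an induction on semisimple rank or dimension does not circumvent this because the endoscopic groups $H$ can have the same rank as $G^*$ (e.g.\ $s=1$). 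Second, the Fourier-inversion step on $\mathcal{S}_\phi^{\mathrm{rig}}$ is circular as stated: to invert the family $s \mapsto D_{\phi,s}^{\mathfrak{w},z}$ and read off characters $\Theta_\pi$ with correct multiplicities, you must already know that this family is a class function on $\pi_0(S_\phi^+)$ valued in tempered virtual characters of $G_z(F)$ with the prescribed integrality and positivity --- but producing precisely this is the content of Conjectures~\ref{conj:refined_LLC_inner} and~\ref{conj:endo_char_rel_inner}, not a formal consequence of the definition of $D_{\phi,s}^{\mathfrak{w},z}$. Nothing in the variance property of transfer factors or the invariance of $S\Theta_{\phi_H}$ forces $D_{\phi,s}^{\mathfrak{w},z}$ to be a finite sum of irreducible characters, let alone with nonnegative integral coefficients.

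You should also be aware that the reduction from tempered to essentially discrete parameters is not as formal as your first paragraph suggests: one needs compatibility of normalized intertwining operators with the internal parametrization, which the paper flags explicitly as a subtlety (see the remark after Conjecture~\ref{conj:endo_char_rel_qs} citing \cite{KeysShahidi}), and the claim that Kottwitz's sign $e(G_z)$ ``accounts for the discrepancy between elliptic orbital integral transfers and the spectral side'' is an assertion, not an argument. In short, the strategy you outline is not unreasonable as a description of how the known cases (Arthur, Mok, Kaletha--Minguez--Shin--White, Fintzen--Kaletha--Spice) are proved --- by simultaneous induction, stabilized (twisted) trace formulas, and, for supercuspidal parameters, explicit constructions --- but it contains no new ingredient that would establish the conjecture in general, and the key analytic inputs you invoke (stabilization of the local trace formula, existence of the packets, positivity of multiplicities) are exactly the open parts.
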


By linear independence of characters the conjecture implies that packets \(\Pi_{\phi_H}(H)\) for all endoscopic groups of \(G^*\) --- all quasi-split groups --- should determine the refined Langlands correspondence for all pure/isocrystal/rigid inner forms of \(G^*\).

If we fix an inner twist \(G,\psi\) of \(G^*\) then it may be realized as a rigid inner twist in more than one way: one can multiply \(z \in Z^1(u \to \Erig, Z \to G^*)\) by any class \(c\) in \(Z^1(u \to \Erig, Z \to Z) = Z^1_\alg(\Erig, Z)\).
By \cite[\S 6]{Kaletha_rivsbg} Conjecture \ref{conj:endo_char_rel_inner} for \(z\) implies the same conjecture for \(c z\).
In particular the same implication holds for pure inner twists.
Presumably the analogous implication should be valid in the isocrystal case.

\subsection{Reduction to the isocrystal case}

Let \(G^*\) be a quasi-split connected reductive group over a \(p\)-adic field \(F\).
As explained above all inner forms of \(G^*\) can be reached using the rigid theory, and one might be tempted to simply forget the pure and isocrystal versions.
They are simpler however, and the relative complexity of the rigid version is exacerbated in the global setting.
Another reason to favor the isocrystal version is that it seems more naturally related to geometric incarnations of the correspondence, as in \cite{FarguesScholze}.
It is thus useful to relate the isocrystal and rigid versions (the relation between the pure and isocrystal versions being rather obvious).

Let \(z^\mathrm{iso} \in Z^1_\bas(\Eiso, G^*)\) and let \(z^\mathrm{rig} \in Z^1_\bas(\Erig, G^*)\) be its pullback via \(\Erig \to \Eiso\).
For \(? \in \{\mathrm{iso},\mathrm{rig}\}\) the class of \(z^?\) defines a character \(\chi^?\) of \(\mathcal{Z}^?\).
As explained in \cite[\S 4]{Kaletha_rivsbg}, for a tempered Langlands parameter \(\phi\) the irreducible representations of \(\mathcal{S}_\phi^?\) with restriction to \(\mathcal{Z}^?\) given by \(\chi^?\) is the same for \(? \in \{\mathrm{iso},\mathrm{rig}\}\), and the endoscopic character relations are also the same.
In \S 5 loc.\ cit.\ Kaletha constructs an embedding \(G^* \to \widetilde{G}^*\) with normal image and abelian cokernel such that the center of \(\widetilde{G}^*\) is connected and such that Conjectures \ref{conj:refined_LLC_inner} and \ref{conj:endo_char_rel_inner} for \(G^*\) and \(\widetilde{G}^*\) are equivalent, under a natural assumption (see \S 5.2 loc.\ cit.).
Since these conjectures for \(\widetilde{G}^*\) can be reduced to the isocrystal case, it would be enough to prove Conjectures \ref{conj:refined_LLC_inner} and \ref{conj:endo_char_rel_inner} for all quasi-split groups in the isocrystal setting (as well as the aforementioned assumption) to deduce them for all quasi-split groups in the rigid setting, yielding ``the'' refined Langlands correspondence for all connected reductive groups.

\subsection{Relation with the crude version}

By \cite[Lemma 5.7]{Kaletha_ri} Conjecture \ref{conj:refined_LLC_inner} recovers the relevance condition on parameters discussed in \ref{sec:red_crude_LLC}.

One can formulate a more precise version of property \eqref{it:crude_LL_isog} in Conjecture \ref{conj:crude_LLC}.
Let \(f:G_1^* \to G_2^*\) be a central isogeny between quasi-split connected reductive groups over \(F\), inducing a dual map \(\hat{f}: {}^L G_2 \to {}^L G_1\).
Let \(\phi_2: \WD_F \to {}^L G_2\) be a tempered Langlands parameter and denote \(\phi_1 = \hat{f} \circ \phi_2\).
Let \(? \in \{\mathrm{pur},\mathrm{rig},\mathrm{iso}\}\).
We use the same notation as in Conjecture \ref{conj:refined_LLC_inner}, choosing finite central subgroups \(Z_i^\mathrm{rig}\) in the rigid case.
Up to enlarging these groups we may assume that \(Z_1^\mathrm{rig}\) contains the kernel of \(f\) and that its image is \(Z_2^\mathrm{rig}\).
Let \(z_1 \in Z^1(u \to \E^?, Z^? \to G_1^*)\) and let \(z_2\) be its image in \(Z^1(u \to \E^?, Z^? \to G_2^*)\).
Denote \(G_1 = G^*_{1,z_1}\) and \(G_2 = G^*_{2,z_2}\).
In all three cases \(\hat{f}\) induces a morphism \(\mathcal{S}_{\phi_2}^? \to \mathcal{S}_{\phi_1}^?\).
Let \(\mathfrak{w}\) be a Whittaker datum for \(G_1^*\) and \(G_2^*\).

\begin{conjecture}
  For any \(\pi_2 \in \Pi_{\phi_2}(G_2)\) we should have
  \[ \pi_2|_{G_1(F)} \simeq \bigoplus_{\pi_1 \in \Pi_{\phi_1}(G_1)} m(\pi_1,\pi_2) \pi_1 \]
  where \(m(\pi_1,\pi_2)\) is the multiplicity of \(\iota_\mathfrak{w}(z_2,\pi_2)\) in the restriction of \(\iota_\mathfrak{w}(z_1,\pi_1)\) to \(\mathcal{S}_{\phi_2}^?\).
\end{conjecture}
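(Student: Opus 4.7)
The plan is to deduce the multiplicity formula from Conjectures~\ref{conj:refined_LLC_inner} and \ref{conj:endo_char_rel_inner} by means of the endoscopic character relations and linear independence of Harish-Chandra characters.

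First, since $\ker f$ is central and finite, the image of $f: G_1(F) \to G_2(F)$ is normal of finite index in $G_2(F)$, so $\pi_2 \circ f$ is a semisimple $G_1(F)$-module of finite length. By Conjecture~\ref{conj:crude_LLC}~(\ref{it:crude_LL_isog}) every irreducible constituent lies in $\Pi_{\phi_1}(G_1)$, giving integers $m(\pi_1,\pi_2) \geq 0$ characterized on strongly regular $\delta \in G_1(F)$ by
\[
  \Theta_{\pi_2}\bigl(f(\delta)\bigr) \;=\; \sum_{\pi_1 \in \Pi_{\phi_1}(G_1)} m(\pi_1,\pi_2)\,\Theta_{\pi_1}(\delta).
\]

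Next, for each semisimple $s_2 \in \mathcal{S}_{\phi_2}^?$, setting $s_1 := \hat{f}(s_2) \in \mathcal{S}_{\phi_1}^?$, I would establish the key identity
\begin{equation} \label{eq:plan-key}
  \Theta^{\mathfrak{w}, z_2}_{\phi_2, s_2}\bigl(f(\delta)\bigr) \;=\; \Theta^{\mathfrak{w}, z_1}_{\phi_1, s_1}(\delta).
\end{equation}
Applying Conjecture~\ref{conj:endo_char_rel_inner} on both sides reduces \eqref{eq:plan-key} to three compatibilities. The first is a correspondence of endoscopic data: the extended endoscopic triple $\mathfrak{e}_2 = (H_2, s_2, {}^L\eta_2)$ for $G_2^*$ pulls back via $\hat{f}$ to an extended endoscopic triple $\mathfrak{e}_1 = (H_1, s_1, {}^L\eta_1)$ for $G_1^*$, with $f_H : H_1 \to H_2$ a central isogeny compatible with $f$ via the canonical identifications of matching maximal tori, satisfying $\phi_{H_1} = \widehat{f_H} \circ \phi_{H_2}$ and inducing a bijection between stable conjugacy classes matching $\delta$ and those matching $f(\delta)$; this follows formally from the functoriality of Langlands dual groups. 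The second is the pullback of normalized transfer factors,
\[
  \Delta[\mathfrak{w}, \mathfrak{e}_2, \psi, z_2]\bigl(f_H(\gamma), f(\delta)\bigr) \;=\; \Delta[\mathfrak{w}, \mathfrak{e}_1, \psi, z_1](\gamma, \delta),
\]
which should come from a term-by-term analysis of the Langlands-Shelstad-Kottwitz construction using the equivariance of the Tate-Nakayama pairings of Theorems~\ref{thm:TN_redgps}, \ref{thm:TN_iso} and \ref{thm:TN_rig} under $f$, together with the defining relation that $z_1$ projects to $z_2$. The third is the pullback of stable characters, $S\Theta_{\phi_{H_2}}(f_H(\gamma)) = S\Theta_{\phi_{H_1}}(\gamma)$ for strongly regular $\gamma \in H_1(F)$; since $H_1$ and $H_2$ are both quasi-split, this is the $s = 1$ case of the refined correspondence for the pair $(H_1, H_2)$ and so reduces to Conjecture~\ref{conj:endo_char_rel_qs}.

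Granting \eqref{eq:plan-key}, I substitute the expansion $\Theta^{\mathfrak{w}, z_i}_{\phi_i, s_i} = e(G_{i,z_i}) \sum_{\pi_i} \langle s_i, \pi_i \rangle_{\mathfrak{w}, z_i}\, \Theta_{\pi_i}$ on each side, use the Clifford formula from the first paragraph to expand $\Theta_{\pi_2}(f(\delta))$, and invoke linear independence of the Harish-Chandra characters $\{\Theta_{\pi_1}\}_{\pi_1 \in \Pi_{\phi_1}(G_1)}$. Using that $e(G_{1,z_1}) = e(G_{2,z_2})$ (both Kottwitz signs depend on the Lie algebra only up to isogeny), this yields
\[
  \sum_{\pi_2 \in \Pi_{\phi_2}(G_2)} \langle s_2, \pi_2 \rangle_{\mathfrak{w}, z_2}\, m(\pi_1, \pi_2) \;=\; \langle \hat{f}(s_2), \pi_1 \rangle_{\mathfrak{w}, z_1}
\]
for every $\pi_1 \in \Pi_{\phi_1}(G_1)$ and every $s_2 \in \mathcal{S}_{\phi_2}^?$; the right-hand side is the trace of $\iota_\mathfrak{w}(z_1, \pi_1)|_{\mathcal{S}_{\phi_2}^?}$ at $s_2$, so character theory on the finite group $\mathcal{S}_{\phi_2}^?$ identifies $m(\pi_1, \pi_2)$ with the multiplicity of $\iota_\mathfrak{w}(z_2, \pi_2)$ in $\iota_\mathfrak{w}(z_1, \pi_1)|_{\mathcal{S}_{\phi_2}^?}$, as claimed. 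The principal obstacle is the transfer-factor pullback: the normalized Langlands-Shelstad-Kottwitz factor is an intricate product of $\Delta_I, \Delta_{II}, \Delta_{III}, \Delta_{IV}$, a Whittaker normalization and a $z$-dependent cohomological contribution, and verifying that each piece transforms correctly under $f$ and $f_H$ requires careful matching of $a$-data, $\chi$-data, Weyl-group identifications, and the behaviour of the rigid or isocrystal covers. Once this technical verification is in place, the remaining steps are formal.
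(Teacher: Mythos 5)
The statement you are asked about is a \emph{conjecture} in the paper, not a theorem. The paper does not prove it; it merely remarks that it implies a more general version for morphisms with central kernel, normal image and abelian cokernel, and points to \cite{Solleveld_LLCisog} and \cite{BourgeoisMezo} for cases in which the conjecture is established. So there is no ``paper's own proof'' to compare your argument against. What you have produced is a \emph{conditional derivation} of this conjecture from Conjectures~\ref{conj:refined_LLC_inner} and~\ref{conj:endo_char_rel_inner} together with some additional compatibilities. That is a reasonable thing to attempt, and the skeleton --- establish the identity~\eqref{eq:plan-key}, expand both sides via the endoscopic character relations, invoke linear independence of Harish-Chandra characters and then finite-group character theory --- is indeed the standard strategy in this circle of ideas.

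However, there is a genuine gap in the third compatibility. You claim that $S\Theta_{\phi_{H_2}}(f_H(\gamma)) = S\Theta_{\phi_{H_1}}(\gamma)$ ``reduces to Conjecture~\ref{conj:endo_char_rel_qs}.'' It does not. Conjecture~\ref{conj:endo_char_rel_qs} concerns a single quasi-split group and its own endoscopic groups; it makes no assertion whatsoever about how the stable distribution $S\Theta_\phi$ transforms under a central isogeny between two quasi-split groups. What you actually need there is precisely (an instance of) the statement you are trying to prove, applied to the quasi-split pair $H_1 \to H_2$ at $s=1$. Since at $s_2 = 1$ the endoscopic groups $H_i$ coincide with $G_i^*$, this instance has the same ``size'' as the original problem and the argument is circular. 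For $s_2$ non-central the centralizer groups shrink and one could in principle run an induction, but the base case $s_2 = 1$ requires a separate argument --- e.g.\ via Clifford theory for the groups $G_1^*(F) \to G_2^*(F)$ combined with Shahidi's Conjecture~\ref{conj:shahidi} to match generic constituents, or some other direct analysis. You should also be aware that the second compatibility (pullback of normalized transfer factors) is itself a substantial theorem-level input; the references cited by the paper devote considerable effort to it and it cannot be dismissed as ``formal.'' Finally, a minor caveat: in the isocrystal case $\mathcal{S}_\phi^{\mathrm{iso}}$ is not a finite group, so ``character theory on the finite group $\mathcal{S}_{\phi_2}^?$'' needs to be phrased more carefully, working with algebraic or unitary characters of a possibly disconnected reductive group fixing the central character prescribed by $z$.
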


For simplicity we have formulated the conjecture for central isogenies, but as in point \eqref{it:crude_LL_isog} of Conjecture \ref{conj:crude_LLC} this conjecture implies a more general version for morphisms \(G_1^* \to G_2^*\) with central kernel, normal image and abelian cokernel.
See \cite{Solleveld_LLCisog} and \cite{BourgeoisMezo} for cases in which this conjecture is known.

\subsection{A non-exhaustive list of known cases}
\label{sec:known_cases}

In the case of real groups Conjectures \ref{conj:refined_LLC_inner} and \ref{conj:endo_char_rel_inner} were proved by Shelstad in many papers, see \cite{Shelstad_tempendo1}, \cite{Shelstad_tempendo2}, \cite{Shelstad_tempendo3} and \cite[\S 5.6]{Kaletha_ri}.

Hiraga and Saito \cite{HiragaSaito} proved Conjectures \ref{conj:refined_LLC_inner} and \ref{conj:endo_char_rel_inner}\footnote{This was before \cite{Kaletha_ri} so one should compare the normalizations of transfer factors.} for inner forms of \(\mathrm{SL}_n\) over non-Archimedean local fields of characteristic zero.

Arthur \cite{Arthur_book} proved Conjectures \ref{conj:refined_LLC_qs} and \ref{conj:endo_char_rel_qs} for quasi-split special orthogonal\footnote{In the even orthogonal case Arthur proved these conjectures ``up to outer automorphism''.} and symplectic groups over non-Archimedean fields of characteristic zero using, among other tools, the stabilization of the twisted trace formula \cite{SFTT1} \cite{SFTT2}.
In this case the stable characters \(S \Theta\) are characterized by \emph{twisted} endoscopy for the group \(\GL_N\) with its automorphism \(\theta: g \mapsto {}^t g^{-1}\) and the correspondence for general linear groups.
Note that endoscopic groups of special orthogonal or symplectic groups are products of similar groups and general linear groups.
Mok \cite{Mok_unitary} followed the same strategy to prove the conjectures for quasi-split unitary groups over non-Archimedean local fields of characteristic zero.
For completeness we recall that to our knowledge the main results of \cite{Arthur_book} and \cite{Mok_unitary} still depend on unpublished results.
These cases were then extended to certain inner forms:
\begin{itemize}
\item using the stabilization of the trace formula: to non-quasi-split unitary groups \cite{KMSW}, to non-quasi-split special orthogonal and unitary groups \cite{MoeglinRenard_inner}, to  non-quasi-split odd special orthogonal groups \cite{Ishimoto_LLC_nqs_SO_odd}, and
\item using theta correspondences: to non-quasi-split unitary groups \cite{ChenZou_LLC_unit_theta}.
\end{itemize}

Gan-Takeda \cite{GanTakeda_GSp4} and Chan-Gan \cite{ChanGan} proved Conjectures  \ref{conj:refined_LLC_inner} and \ref{conj:endo_char_rel_inner} for the groups \(\mathrm{GSp}_4\) over non-Archimedean local fields of characteristic zero, using theta correspondences and the stabilization of the trace formula.

The method of close fields of Deligne and Kazhdan allowed several authors to extend the existence of a map \(\LL\) for certain types of groups over non-Archimedean fields from the characteristic zero case to the positive characteristic case:
\begin{itemize}
\item \cite{Ganapathy_LLCGSp4} for \(\mathrm{GSp}_4\) (assuming that the characteristic is not \(2\)),
\item \cite{GanapathyVarma} for split symplectic and special orthogonal groups (with a restriction on the characteristic),
\item \cite{AubertBaumPlymenSolleveld_LLCinnerSL} for inner forms of \(\SL_n\).
\end{itemize}
This method gives internal structure of L-packets but does not seem to yield endoscopic character relations.

In the non-Archimedean cases mentioned above the characterizations of the local Langlands correspondence are rather indirect (using functoriality, global methods etc).
Of course it is desirable to have a more direct construction, like in the case of real groups \cite{Langlands_class}.
The existence of (many) supercuspidal representations implies that such a direct construction has to be much more complicated than in the real case.
Thanks to the work of many mathematicians (Moy and Prasad, Morris, Adler, Yu, Kim, Fintzen, Hakim and Murnaghan) we now have a ``direct'' classification of supercuspidal representations, i.e.\ one using representation theory rather than Langlands parameters, at least for tamely ramified connected reductive groups such that the residual characteristic \(p\) does not divide the order of the absolute Weyl group.
We refer the reader to \cite[\S 1.2]{Kaletha_ICM22} for more details and references.
Using this classification and building on work of Adler, DeBacker, Reeder and Spice, Kaletha constructed \cite{Kaletha_regscusp} \cite{Kaletha_scuspL}, under the above assumption on \((G,p)\) and for supercuspidal (i.e.\ essentially discrete and trivial on \(\SL_2\)) Langlands parameters \(\phi\), L-packets \(\Pi_\phi\) and natural parametrizations as in the rigid case of Conjecture \ref{conj:refined_LLC_inner}.
Under additional assumptions (\(p\) large enough and \(F\) of characteristic zero) Fintzen, Kaletha and Spice \cite[Theorem 4.4.4]{FintzenKalethaSpice_endo}, improving on previous work of Adler, DeBacker, Kaletha and Spice, proved stability and the endoscopic character relations of the rigid case of Conjecture \ref{conj:endo_char_rel_inner} for \(s\) in a certain subgroup of \(S_\phi^+\).
For the so-called \emph{regular} supercuspidal parameters this subgroup is \(S_\phi^+\), i.e.\ Conjecture \ref{conj:endo_char_rel_inner} holds.

There is much work to be done in this direction to handle, in order of increasing generality: all supercuspidal parameters, all essentially discrete parameters, all tempered parameters.
Recently Aubert and Xu \cite{AubertXu_LLC_G2} constructed a map \(\LL\) for the split group \(G_2\) over a finite extension of \(\Q_p\) for \(p \not \in \{2,3\}\), together with a parametrization of \(L\)-packets (Conjecture \ref{conj:refined_LLC_qs}).
Their work uses Kaletha's parametrization for supercuspidal representations and Hecke algebra techniques for the non-supercuspidal ones.
Gan and Savin \cite{GanSavin_LLC_G2} also constructed a map \(\LL\) for the split group \(G_2\) over a finite extension of \(\Q_p\) using theta correspondences with ``classical'' groups, and gave a parametrization of \(L\)-packets for \(p \neq 3\).

\section{Gerbes, Tannakian formalism and isocrystals} \label{sec:gerbe_tann}

We briefly mention the more conceptual point of view on gerbes and Tannakian categories, and explain how it relates the above definition of \(H^1_\alg(\Eiso,G)\) with the set \(B(G)\) of isocrystals with \(G\)-structure \cite{Kottwitz_isoc1} \cite{Kottwitz_isoc2}.
The latter point of view was motivated by the study of Shimura varieties over finite fields and historically came first.

\subsection{Gerbes and Tannakian formalism}

We first recall the equivalence between certain gerbes and Tannakian categories \cite[Théorème 3]{Saavedra} as corrected by \cite{Deligne_Tannaka}.
We consider fpqc stacks over \(F\).
Recall that a gerbe is a stack in groupoids admitting (fpqc) local sections and such that any two objects are (fpqc) locally isomorphic.
So as to not give a false sense of generality, let us mention right away that we will ultimately not need covers more complicated than \(\Spec \ol{F} \to \Spec F\).
For a stack in groupoids \(\mathcal{C}\) (we leave the functor \(p\) from \(\mathcal{C}\) to the category of schemes over \(F\) implicit) and a scheme \(S\) over \(F\) we denote by \(\mathcal{C}(S)\) the fiber of \(S\), i.e.\ the groupoid with objects those objects \(x\) of \(\mathcal{C}\) satisfying \(p(x) = S\) and morphisms those morphisms of \(\mathcal{C}\) over \(\id_S\).
This notation is meant to suggest thinking of \(\mathcal{C}\) as a sheaf in groupoids (which is another possible definition of stacks in groupoids if one is inclined to using higher categories, namely the \((2,1)\)-category of groupoids).
For a morphism of schemes \(f: U \to V\) and an object \(x\) of \(\mathcal{C}(V)\) we will denote by \(f^* x \xrightarrow{\beta(f,x)} x\) a strongly cartesian morphism in \(\mathcal{C}\) above \(f\).
We will leave the coherence isomorphisms of functors \(\id^* \simeq \id\) and \(f^* g^* \simeq (g f)^*\) implicit, using equalities to lighten the notation.
A gerbe \(\mathcal{C}\) is said to have affine band if for any scheme \(S\) over \(F\) and any two objects \(x,y\) of \(\mathcal{C}(S)\) the sheaf \(\ul{\Isom}(x,y): (T \xrightarrow{f} S) \mapsto \Isom_{\mathcal{C}(T)}(f^* x, f^* y)\) is representable by an affine scheme over \(S\).
If this holds for one non-empty \(S\) and one pair \((x,y)\) then \(\mathcal{C}\) has affine band \cite[p.\ 131]{Deligne_Tannaka}.
If a gerbe has affine band we simply say that it is affine.

The prime example of a gerbe is the stack \(BG\) of right \(G\)-torsors associated to some group scheme \(G\) over \(F\): for any scheme \(S\) over \(F\) the fibre \(BG(S)\) is the groupoid of \(G_S\)-torsors on \(S\).
General gerbes may be thought as twisted versions of this example, not admitting a distinguished global section (for \(BG\), the trivial \(G\)-torsor).

A representation \(R\) of a gerbe \(\mathcal{C}\) is a morphism from \(\mathcal{C}\) to the stack of quasi-coherent sheaves (over varying schemes over \(F\)).
A representation may also be intuitively understood as a quasi-coherent sheaf on \(\mathcal{C}\).
For a scheme \(S\) over \(F\) and an object \(x\) of \(\mathcal{C}(S)\) the quasi-coherent sheaf \(R(x)\) over \(S\) is automatically flat, and if it has finite rank \(n\) for some pair \((S,x)\) then \(R(y)\) has the same rank for any object \(y\) of \(\mathcal{C}\) \cite[\S 3.5]{Deligne_Tannaka}.
In that case we may see \(R\) as a morphism from \(\mathcal{C}\) to the stack of vector bundles of rank \(n\) (equivalently, \(\GL_n\)-torsors).
Finite-dimensional representations of \(\mathcal{C}\) form a category \(\Rep(\mathcal{C})\), that can be endowed with a tensor product (taking tensor products of vector bundles).
In fact \(\Rep(\mathcal{C})\) is a tensor category over \(F\) (in the sense of \cite[\S 2.1]{Deligne_Tannaka}).
Because \(\mathcal{C}\) has local sections the tensor category \(\Rep(\mathcal{C})\) is even Tannakian, i.e.\ it admits a fiber functor \cite[\S 1.9]{Deligne_Tannaka} over some non-empty scheme over \(F\).
For example for a group scheme \(G\) over \(F\) the gerbe \(BG\) has associated Tannakian category \(\Rep(BG)\), which is equivalent to the category \(\Rep(G)\) of finite-dimensional representations of \(G\).

To any tensor category \(\mathcal{T}\) over \(F\) we can associate the fibered category (over schemes over \(F\)) of fiber functors of \(\mathcal{T}\), denoted by \(\Fib(\mathcal{T})\).
If \(\mathcal{T}\) is Tannakian then \(\Fib(\mathcal{T})\) is an affine gerbe and the natural tensor functor \(\mathcal{T} \to \Rep(\Fib(\mathcal{T}))\) is an equivalence.
Conversely for a gerbe \(\mathcal{C}\) we also have a natural morphism of stacks \(\mathcal{C} \to \Fib(\Rep(\mathcal{C}))\) which is an equivalence if and only if \(\mathcal{C}\) is affine.

For an affine gerbe \(\mathcal{C}\) and a linear algebraic group \(G\) over \(F\) we can consider morphisms of stacks from \(\mathcal{C}\) to the gerbe \(BG\) of \(G\)-torsors, generalizing the notion of representation of \(\mathcal{C}\).
Such a morphism may also be interpreted as a \(G\)-torsor on \(\mathcal{C}\) (see \cite[\S 2.4]{Dillery_ri}).
By the correspondence recalled above such a morphism amounts to a morphism of tensor categories \(\Rep(G) \to \Rep(\mathcal{C})\).
The set\footnote{This might not be a set in general, but it turns out to be a set at least if the band of \(\mathcal{C}\) is a commutative group scheme (over \(F\)) that is a countable projective limit of finite type commutative group schemes: see \cite[Corollary 2.57 and Lemma 2.63]{Dillery_ri}.} of isomorphism classes of morphisms \(\mathcal{C} \to BG\) will be denoted by \(H^1(\mathcal{C}, G)\).

We now show how to pass from an abstract gerbe (as considered in this section) to a Galois gerbe in the sense of Section \ref{sec:refined_LLC_inner}.
We now assume that \(F\) has characteristic zero and specialize to the case of an affine gerbe \(\mathcal{C}\) whose band \(u\) is commutative, so that \(u\) is an affine commutative group scheme over \(F\).
It is isomorphic to a projective limit, over a directed poset \(I\), of commutative group schemes of finite type \((u_i)_{i \in I}\).
We assume further that \(I\) may be chosen to be countable.
We make this assumption because it implies that any projective limit over \(I\) of non-empty sets with surjective transition maps is itself not empty.
We may identify \(\mathcal{C}\) with a projective limit of gerbes \(\mathcal{C}_i\) bound by \(u_i\) (equivalently, we may identify the Tannakian category \(\Rep(\mathcal{C})\) with a union of tensor subcategories admitting a tensor generator).
Recall from \cite[Chapitre III Théorème 3.1.3.3]{Saavedra} or \cite[Corollaire 6.20]{Deligne_Tannaka} that \(\mathcal{C}_i\) admits a section over a finite extension of \(F\).
It follows that \(\mathcal{C}\) admits a section over \(\ol{F}\), say \(x\).
By our assumption on \(I\) the fiber \(\mathcal{C}(\Spec \ol{F})\) has only one isomorphism class (i.e.\ every \(u_{\ol{F}}\)-torsor is trivial).
This implies that the group \(\Aut_{\mathcal{C}}(x)\) is an extension \(\E\) of \(\Gamma\) by \(\Aut_{\mathcal{C}(\Spec \ol{F})}(x) = u(\ol{F})\).
Moreover for any \(x \in \mathcal{C}(\Spec \ol{F})\) the two pullbacks \(p_1^* x\) and \(p_2^* x\) in \(\mathcal{C}(\Spec \ol{F}^{\otimes 2})\) are isomorphic \cite[Lemma 2.63]{Dillery_ri}.
Choose such an isomorphism \(\varphi: p_1^* x \simeq p_2^* x\).
For \(\sigma \in \Gamma\) pulling back \(\varphi\) via the morphism \((\sigma \otimes \id)^\sharp: \Spec \ol{F} \to \Spec \ol{F}^{\otimes 2}\) dual to
\begin{align*}
  \sigma \otimes \id: \ol{F} \otimes_F \ol{F} & \longrightarrow{} \ol{F} \\
  x \otimes y & \longmapsto \sigma(x) y
\end{align*}
yields an isomorphism \((\sigma^\sharp)^* x \simeq x\).
We obtain \(\varphi_\sigma := ((\sigma \otimes \id)^\sharp)^* \varphi \circ \beta(\sigma^\sharp,x)^{-1}\) in \(\E\) above \((\sigma^\sharp)^{-1}\).
Thus \(\varphi\) determines  a (set-theoretic) splitting \(\Gamma \to \Aut_{\mathcal{C}}(x), \sigma \mapsto \varphi_\sigma\).
Taking the ``coboundary'' \(d\varphi = (p_{13}^* \varphi)^{-1} \circ (p_{23}^* \varphi) \circ (p_{12}^* \varphi)\) yields an automorphism of the pullback of \(x\) via the first projection \(\Spec \ol{F}^{\otimes 3} \to \Spec \ol{F}\), i.e.\ an element of \(u(\ol{F}^{\otimes 3})\), and one can check that it is a Čech \(2\)-cocycle \cite[Fact 2.31]{Dillery_ri}.
Conversely from any such \(2\)-cocycle one can construct a gerbe bound by \(u\) \cite[Proposition 2.36]{Dillery_ri}, and two gerbes bound by \(u\) are isomorphic if and only if their associated class in \(\check{H}^2(\ol{F}/F,u)\) are equal.
Yet another projective limit argument shows that we have a natural isomorphism \(\check{H}^2(\ol{F}/F,u) \simeq H^2_{\cont}(\Gamma, u(\ol{F}))\).
Thus the two notions of gerbes coincide.

We now compare the cohomology sets valued in a linear algebraic group \(G\) over \(F\), under the same assumptions.
For a morphism of stacks \(R: \mathcal{C} \to BG\), \(R\) factors through \(\mathcal{C}_i\) for some \(i \in I\) (equivalently, \(\Rep(G)\) has a tensor generator \cite[Proposition 2.20 (b)]{DeligneMilne_Tannakian} and so the tensor functor \(\Rep(G) \to \Rep(\mathcal{C})\) factors through a sub-tensor category of \(\Rep(\mathcal{C})\) generated by a single object).
Choosing a trivialization \(t: R(x) \simeq q^* T\) where \(q: \Spec \ol{F} \to \Spec F\) and \(T\) is the trivial \(G\)-torsor on \(\Spec F\) we obtain a morphism
\begin{align*}
  c(R,t)|_u: u_{\ol{F}} & \longrightarrow G_{\ol{F}} \\
  \gamma \in u(S \xrightarrow{f} \Spec \ol{F}) & \longmapsto f^* t \circ R(\gamma) \circ (f^* t)^{-1}.
\end{align*}
Some other choice of trivialization would give the same morphism conjugated by some element of \(G(\ol{F})\).
Furthermore we have an isomorphism
\begin{align*}
  G(\ol{F}) \rtimes \Gamma & \longrightarrow \Aut_{BG}(q^* T) \\
  g \rtimes \sigma & \longmapsto g \circ \beta(\sigma^\sharp, q^* T)^{-1}
\end{align*}
using \((\sigma^\sharp)^* q^* T = q^* T\), and so restricting \(R\) to \(\E\) gives us a morphism
\begin{align*}
  c(R,t): \E & \longrightarrow G(\ol{F}) \rtimes \Gamma \\
  w & \longmapsto t R(w) t^{-1} \beta(\sigma^\sharp, q^* T) \rtimes \sigma
\end{align*}
where \(\sigma\) is the image of \(w\) in \(\Gamma\), i.e.\ \(w\) lies above the automorphism \((\sigma^\sharp)^{-1}\) of \(\Spec \ol{F}\).
We may also see \(c(R,t)\) as a \(1\)-cocycle \(\E \to G(\ol{F})\).
The restriction of \(c(R,t)\) to \(u(\ol{F})\) is the morphism induced by \(c(R,t)|_u\), in particular it is algebraic.
Note that this also implies that if \(c(R,t)|_u\) factors through the center of \(G_{\ol{F}}\) then \(c(R,t)|_u\) is defined over \(F\).
The following proposition could certainly be deduced from \cite[Propositions 2.50 and 2.54]{Dillery_ri}, translating between Čech and Galois cocycles.
For the convenience of the reader we give a self-contained proof.

\begin{proposition} \label{pro:translation_cocycles_abstract_gerbes}
  The map \((R,t) \mapsto c(R,t)\) gives a bijection between isomorphism classes\footnote{An isomorphism between two pairs \((R_1,t_1)\) and \((R_2,t_2)\) is a morphism of functors \(f: R_1 \to R_2\) satisfying \(t_2 \circ f(x) = t_1\).} of pairs \((R, t)\) consisting of a morphism of stacks \(R: \mathcal{C} \to BG\) and a trivialization \(t\) of \(R(x)\), and \(Z^1_{\alg}(\E, G(\ol{F}))\).
  It induces a bijection \(H^1(\mathcal{C}, G) \to H^1_{\alg}(\E, G(\ol{F}))\) (forgetting the trivialization \(t\)).
\end{proposition}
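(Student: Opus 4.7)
The strategy has three steps: check that $(R,t) \mapsto c(R,t)$ descends to a well-defined map on isomorphism classes landing in $Z^1_{\mathrm{alg}}(\E, G(\overline{F}))$, construct an inverse, and verify that the two are mutually inverse.

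First, I would verify that $c(R,t)$ satisfies the cocycle relation by applying functoriality of $R$ to a product $w_1 w_2 \in \E$ above $\sigma_1 \sigma_2 \in \Gamma$, and tracking the coherence isomorphisms for the pullback functors $(\sigma^\sharp)^*$; the algebraicity of $c(R,t)|_{u(\overline{F})}$ is already recorded in the paragraph preceding the proposition. An isomorphism of pairs $(R_1,t_1) \simeq (R_2,t_2)$ yields the equality $c(R_1,t_1) = c(R_2,t_2)$ directly from the definition, and a change of trivialization by $g \in G(\overline{F})$ modifies the cocycle by the coboundary associated to $g$, giving the induced map on $H^1$.

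For the inverse I would exploit the Tannakian equivalence $\mathcal{C} \simeq \Fib(\Rep(\mathcal{C}))$ valid for affine gerbes: a morphism of stacks $\mathcal{C} \to BG$ amounts to a tensor functor $\Rep(G) \to \Rep(\mathcal{C})$, and composing with the fiber functor ``evaluation at $x$'' gives a $G$-torsor $R(x)$ over $\Spec \overline{F}$ equipped with a compatible action of $\Aut_{\mathcal{C}}(x) = \E$. Given $c \in Z^1_{\mathrm{alg}}(\E, G(\overline{F}))$, equip the trivial torsor $q^* T$ over $\Spec \overline{F}$ with the $\E$-action determined by $c$ via the identification $\Aut_{BG}(q^* T) \simeq G(\overline{F}) \rtimes \Gamma$. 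The algebraicity hypothesis on $c|_{u(\overline{F})}$ is precisely what is needed to guarantee that this $\E$-equivariant fiber functor arises from a genuine morphism of stacks $R_c: \mathcal{C} \to BG$, endowed with the tautological trivialization $t_c$ of $R_c(x)$.

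The main obstacle, and the step I would write out most carefully, is the correspondence between $\E$-equivariant trivial torsors with algebraic $u$-action on one side and isomorphism classes of morphisms $\mathcal{C} \to BG$ on the other. I would reduce to the case where the band $u$ is of finite type, using that the algebraic morphism $c|_u: u_{\overline{F}} \to G_{\overline{F}}$ factors through some $u_i$ because $G$ is of finite type; this replaces $\mathcal{C}$ by a quotient gerbe $\mathcal{C}_i$ which remains affine and admits sections locally in the fppf topology. For such a gerbe, $R_c$ can be defined explicitly by descent: any $y \in \mathcal{C}_i(S)$ becomes isomorphic to a pullback of $x$ over a suitable cover of $S$, and the cocycle $c$ prescribes how to descend the trivial $G$-torsor along that cover, the cocycle condition ensuring this is independent of choices. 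With this in place, checking that $c \mapsto (R_c, t_c)$ is inverse to $(R,t) \mapsto c(R,t)$ reduces to the unambiguous verifications that the $\E$-action on $R_c(x)$ reproduces $c$, and that any $R$ is recovered from $(x, R(x), \E\text{-action})$ via the Tannakian equivalence; both follow once the descent setup is in place.
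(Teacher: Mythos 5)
Your proposal takes essentially the same route as the paper's own proof: the inverse is constructed by defining $R_c$ on objects via descent, and the main work is checking that the cocycle $c$ yields well-defined descent data independent of all choices. The paper makes the phrase ``the cocycle $c$ prescribes how to descend the trivial $G$-torsor along that cover'' precise by constructing a morphism of sheaves $\nu_c: \ul{\Isom}(p_1^* x, p_2^* x) \to G_{\ol{F}^{\otimes 2}}$ extending $c$ to the whole $\ul{\Isom}$-sheaf (not just its $\E$-indexed sections) and verifying two cocycle-like identities for $\nu_c$; this is the concrete device underlying your sketch, and it is the nontrivial content of the step you flag as ``the one I would write out most carefully.''
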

\begin{proof}
  It is easy to deduce from the definition that isomorphic pairs yield the same cocycle, so we have a well-defined map.
  In order to prove that this map is a bijection the key construction consists of associating to \(c \in Z^1_{\alg}(\E, G(\ol{F}))\) a morphism \(\nu_c: \ul{\Isom}(p_1^* x, p_2^* x) \to G_{\ol{F}^{\otimes 2}}\) (of fpqc sheaves over \(\Spec \ol{F}^{\otimes 2}\), both representable by affine schemes).
  Choose \(\varphi \in \ul{\Isom}(p_1^* x, p_2^* x)(\Spec \ol{F}^{\otimes 2})\).
  Since \(\ul{\Isom}(p_1^* x, p_2^* x)\) is a torsor under \(\ul{\Aut}(p_1^* x) = u_{\ol{F}^{\otimes 2}}\) and we already have \(c|_u: u_{\ol{F}} \to G_{\ol{F}}\) (a morphism of group schemes, determined by the restriction of \(c\) to \(u(\ol{F})\)) it is enough to define \(\nu_c(\varphi) \in G(\ol{F}^{\otimes 2})\).
  There exists a quotient \(u \onto v\) such that \(v\) is of finite type over \(F\) and the restriction of \(c\) to \(u\) factors through \(v\).
  There exists a finite Galois subextension \(E/F\) of \(\ol{F}/F\) such that
  \begin{itemize}
  \item the \(2\)-cocycle \(\Gamma \to u(\ol{F})\) associated to the \(1\)-cochain \(\sigma \mapsto \varphi_\sigma\), when projected to \(v(\ol{F})\), takes values in \(v(E)\) and factors through \(\Gamma^2 \onto \Gal(E/F)^2\),
  \item the map \(\sigma \mapsto c(\varphi_\sigma)\) is inflated from a map \(\Gal(E/F) \to G(E)\).
  \end{itemize}
  Then \((c(\varphi_\sigma))_{\sigma \in \Gal(E/F)}\) can be seen as an element of \(G(E \otimes_F E)\), and there is a unique \(\nu_c(\varphi) \in G(E \otimes_F E)\) such that for any \(\sigma \in \Gal(E/F)\) we have \((\sigma \otimes \id) \nu_c(\varphi) = c(\varphi_\sigma)\).
  We then define, for \(f: S \to \Spec \ol{F}^{\otimes 2}\) and \(\gamma \in u(S \xrightarrow{p_1 \circ f} \Spec \ol{F})\),
  \[ \nu_c(f^* \varphi \circ \gamma) := f^* \nu_c(\varphi) \cdot c|_u(\gamma) \in G(S). \]
  A simple computation (left to the reader) using the fact that \(c\) is a cocycle shows that \(\nu_c\) does not depend on the choice of \(\varphi\).
  If \(c\) arises from \((R,t)\) then \(\nu_c\) is equal to the morphism induced by \((R,t)\) in the obvious way:
  \begin{align*}
    ((\sigma \otimes \id)^\sharp)^* \left( p_2^* t \circ R(\varphi) \circ (p_1^* t)^{-1} \right)
    &= t \circ R(\varphi_\sigma \circ \beta(\sigma^\sharp, x)) \circ ((\sigma^\sharp)^* t)^{-1} \\
    &= t \circ R(\varphi_\sigma) \circ t^{-1} \circ \beta(\sigma^\sharp, q^* T) \\
    &= c(\varphi_\sigma) \\
    &= (\sigma \otimes \id) \nu_c(\varphi).
  \end{align*}
  We check the following two properties of \(\nu_c\), that will in particular allow us to check that \(\nu_c\) maps descent data to descent data.
  \begin{enumerate}
  \item Seeing \(\delta := (p_{13}^* \varphi)^{-1} \circ p_{23}^* \varphi \circ p_{12}^* \varphi\) as an element of \(u(\Spec \ol{F}^{\otimes 3} \xrightarrow{p_1} \Spec \ol{F})\) we have in \(G(\ol{F}^{\otimes 3})\)
    \begin{equation} \label{eq:nu_c_rel1}
      p_{23}^* \nu_c(\varphi) \cdot p_{12}^* \nu_c(\varphi) = p_{13}^* \nu_c(\varphi) \cdot c|_u(\delta).
    \end{equation}
  \item For any morphism of schemes \(f: S \to \Spec \ol{F}^{\otimes 2}\) and any section \(\alpha \in (p_2^* u)(f) = u(S \xrightarrow{p_2 \circ f} \Spec \ol{F})\) we have in \(G(S)\)
    \begin{equation} \label{eq:nu_c_rel2}
      \Ad(f^* \nu_c(\varphi)) \left( c|_u((f^* \varphi)^{-1} \circ \alpha \circ f^* \varphi) \right)  = c|_u(\alpha).
    \end{equation}
  \end{enumerate}
  The first property is an equality in \(G(\ol{F}^{\otimes 3}) = \varinjlim_E G(E^{\otimes 3})\) where the colimit ranges over finite Galois subextensions \(E\) of \(\ol{F}/F\), so it is enough to check equality after pullbacks along morphisms of the form \((\sigma \tau \otimes \sigma \otimes \id)^\sharp: \Spec \ol{F} \to \Spec \ol{F}^{\otimes 3}\) for \(\sigma,\tau \in \Gamma\).
  The relation then becomes (details left to the reader)
  \[ c(\varphi_\sigma) \cdot \sigma \left( c(\varphi_\tau) \right) \overset{?}{=} c(\varphi_{\sigma \tau}) \cdot \sigma \tau \left( c(\varphi_{\sigma \tau}^{-1} \varphi_\sigma \varphi_\tau) \right) \]
  and this equality follows from the assumption that \(c\) is a cocycle.
  The second property is an equality between two morphisms of sheaves \(p_2^* u \to G_{\ol{F}^{\otimes 2}}\), and since they factor through \(p_2^* u_i\) for some \(i\) it is again enough to check equality in the case where \(f\) is \((\sigma \otimes \id)^\sharp: \Spec \ol{F} \to \Spec \ol{F}^{\otimes 2}\).
  In this case we simply have \(\alpha \in u(\ol{F})\) and \(f^* \varphi = \varphi_\sigma \circ \beta(\sigma^\sharp, x)\), so the relation becomes
  \[ (\Ad c(\varphi_\sigma)) \sigma(c(\varphi_\sigma^{-1} \alpha \varphi_\sigma)) \overset{?}{=} c(\alpha) \]
  and again this follows from the cocycle relation for \(c\).

  Now given \(c \in Z^1_{\alg}(\E, G(\ol{F}))\) we want to define a preimage \((R,t)\) of \(c\).
  We begin by defining \(R\) on objects.
  Let \(y\) in \(\mathcal{C}\) lying over an \(F\)-scheme \(S\).
  There exists an fpqc cover \(S' \to S_{\ol{F}}\) and an isomorphism \(\psi: a^* y \simeq b^* x\) where \(a: S' \to S\) and \(b: S' \to \Spec \ol{F}\).
  Via this isomorphism we may see the canonical descent datum for \(a^* y\) along \(a\) as a section \(\epsilon := \pi_2^* \psi \circ (\pi_1^* \psi)^{-1}\) of \(\ul{\Isom}(p_1^* x, p_2^* x)\) on \(b_2: S' \times_S S' \to \Spec \ol{F}^{\otimes 2}\), where \(\pi_i: S' \times_S S' \to S'\) are the projections.
  We now check that \(\nu_c(\epsilon)\) is also a descent datum (in \(BG\), for \(b^* q^* T\) along \(a\)).
  Writing \(\epsilon = b_2^* \varphi \circ \gamma\) where \(\gamma \in \Aut(b_2^* p_1^* x) = u(p_1 \circ b_2)\) we compute
  \begin{align*}
    \pi_{23}^* \nu_c(\epsilon) \cdot \pi_{12}^* \nu_c(\epsilon)
    &= \pi_{23}^* b_2^* \nu_c(\varphi) \cdot \pi_{23}^* c|_u(\gamma) \cdot \pi_{12}^* b_2^* \nu_c(\varphi) \cdot \pi_{12}^* c|_u(\gamma) \\
    &\overset{\eqref{eq:nu_c_rel2}}{=} \pi_{23}^* b_2^* \nu_c(\varphi) \cdot \pi_{12}^* b_2^* \nu_c(\varphi) \cdot c|_u \left( (\pi_{12}^* b_2^* \varphi)^{-1} \circ \pi_{23}^* \gamma \circ \pi_{12}^* b_2^* \varphi \circ \pi_{12}^* \gamma \right) \\
    &= b_3^* \left( p_{23}^* \nu_c(\varphi) \cdot p_{12}^* \nu_c(\varphi) \right) \cdot c|_u \left( (b_3^* p_{12}^* \varphi)^{-1} \circ \pi_{23}^* \gamma \circ \pi_{12}^* \epsilon \right) \\
    & \overset{\eqref{eq:nu_c_rel1}}{=} b_3^* p_{13}^* \nu_c(\varphi) \cdot c|_u \left( b_3^* \delta \circ (b_3^* p_{12}^* \varphi)^{-1} \circ \pi_{23}^* \gamma \circ \pi_{12}^* \epsilon \right) \\
    & = b_3^* p_{13}^* \nu_c(\varphi) \cdot c|_u \left( (b_3^* p_{13}^* \varphi)^{-1} \circ b_3^* p_{23}^* \varphi \circ \pi_{23}^* \gamma \circ \pi_{12}^* \epsilon \right) \\
    & = b_3^* p_{13}^* \nu_c(\varphi) \cdot c|_u \left( (b_3^* p_{13}^* \varphi)^{-1} \circ \pi_{23}^* \epsilon \circ \pi_{12}^* \epsilon \right) \\
    & = \pi_{13}^* b_2^* \nu_c(\varphi) \cdot c|_u \left( (\pi_{13}^* b_2^* \varphi)^{-1} \circ \pi_{13}^* \epsilon \right) \\
    & = \pi_{13}^* b_2^* \nu_c(\varphi) \cdot c|_u \left( \pi_{13}^* \gamma \right) \\
    &= \pi_{13}^* \nu_c(\epsilon).
  \end{align*}
  We obtain an object \(R(y)\) of \(BG(S)\) (and an isomorphism \(a^* R(y) \simeq b^* q^* T\)).
  Let us check that it does not depend on the choice of \(S'\) and \(\psi\).
  \begin{itemize}
  \item If we use \(\psi' = \gamma \circ \psi\) instead of \(\psi\), where \(\gamma \in \Aut(b^* x) = u(S' \xrightarrow{b} \Spec \ol{F})\) then denoting \(\epsilon' = \pi_2^* \psi' \circ (\pi_1^* \psi')^{-1}\) and \(\omega = c|_u(\gamma) \in G(S')\) we have
    \[ \nu_c(\epsilon') = c|_u(\pi_2^* \gamma) \cdot \nu_c(\epsilon) \cdot c|_u(\pi_1^* \gamma)^{-1} = \pi_2^* \omega \cdot \nu_c(\epsilon) \cdot (\pi_1^* \omega)^{-1} \]
    thanks to the fact that \(\nu_c\) is compatible with the \((p_1^* u, p_2^* u)\)-bitorsor structure on \(\ul{\Isom}(p_1^* x, p_2^* x)\) via \(c\) (for \(p_1^* u\) by definition of \(\nu_c\) and for \(p_2^* u\) by \eqref{eq:nu_c_rel2}).
    Thus \(\omega\) defines an isomorphism between the descent data \(\nu_c(\epsilon)\) and \(\nu_c(\epsilon')\).
  \item If we compose the cover \(S' \xrightarrow{b} S\) with a cover \(S'' \xrightarrow{d} S\) and use \(d^* \psi: (ad)^* y \simeq (bd)^* x\) to form \(\epsilon' = \rho_2^* \psi' \circ (\rho_1^* \psi')^{-1}\) where \(\rho_i: S'' \times_S S'' \to S''\) denote the canonical projections, then denoting \(d_2 = (d,d): S'' \times_S S'' \to S' \times_S S'\) we have \(\epsilon' = d_2^* \epsilon\), and we obtain an isomorphism between the objects of \(BG(S)\) associated to the descent data \(\nu_c(\epsilon)\) and \(\nu_c(\epsilon') = d_2^* \nu_c(\epsilon)\).
  \end{itemize}
  In the case where \(S=\Spec \ol{F}\) and \(y=x\) we may take \(S'=S\) and \(\psi = \id\), and we obtain a (distinguished) isomorphism between \(t: R(x) \simeq q^* T\).

  Next for a morphism \(d: U \to S\) we define an isomorphism \(R(d^* y) \simeq d^* R(y)\).
  To this end we use the cover \(U' := U \times_S S' \xrightarrow{a'} U\) and the isomorphism \(\psi' := (d')^* \psi: (a')^* d^* y \simeq (b')^* x\), where \(U' \xrightarrow{d'} S'\) and \(b'=b \circ d'\), to form \(\epsilon' := \rho_2^* \psi' \circ (\rho_1^* \psi')^{-1}\) where \(\rho_i: U' \times_U U' \to U'\) are the canonical projections.
  Then \(\epsilon'\) is the pullback of \(\epsilon\) along \(U' \times_U U' \to S' \times_S S'\) and we deduce by fpqc descent an isomorphism \(R(d^* y) \simeq d^* R(y)\).

  To complete the definition of \((R,t)\) it remains to associate an isomorphism \(R(f): R(y_1) \simeq R(y_2)\) to an isomorphism \(f: y_1 \simeq y_2\) in \(\mathcal{C}(S)\).
  Choose isomorphisms \(\psi_i: a^* y_i \simeq b^* x\) and let \(g\) be the element of \(\Aut(b^* x)\) making the following diagram (where all arrows are isomorphisms) commutative.
  \[
    \begin{tikzcd}
      a^* y_1 \ar[r, "{\psi_1}"] \ar[d, "{a^* f}"] & b^* x \ar[d, "{g}"] \\
      a^* y_2 \ar[r, "{\psi_2}"] & b^* x
    \end{tikzcd}
  \]
  We have
  \[ \epsilon_2 := \pi_2^* \psi_2 \circ (\pi_1^* \psi_2)^{-1} = \pi_2^* g \circ \left( \pi_2^* \psi_1 \circ (\pi_1^* \psi_1)^{-1} \right) \circ (\pi_1^* g)^{-1} = \pi_2^* g \circ \epsilon_1 \circ (\pi_1^* g)^{-1} \]
  and so \(c|_u(g) \in G(S')\) defines an isomorphism between the descent data \(\nu_c(\epsilon_1)\) and \(\nu_c(\epsilon_2)\), inducing an isomorphism \(R(f): R(y_1) \simeq R(y_2)\).
  We leave it to the reader to check that \(R(f)\) does not depend on the choice of \(\psi_1\) and \(\psi_2\) and that \(R\) (on morphisms) is compatible with composition.

  This concludes the construction of a preimage \((R,t)\) of \(c\).
  For uniqueness we simply stare at each step of the construction and observe that (up to unique isomorphism) we don't have any other choice to define \((R,t)\).
\end{proof}

In the positive characteristic case we do not have a good analogue of the extension \(u(\ol{F}) \to \E \to \Gamma\), which is why Dillery uses the abstract notion of gerbe in \cite{Dillery_ri}.
One could presumably work with just twisted Čech cocycles \cite[Definition 2.53]{Dillery_ri}, but doing so would obscure all constructions.

\subsection{Isocrystals}

For \(F\) a non-Archimedean local field of characteristic zero the gerbe corresponding to \(\Eiso\) was historically first introduced via its corresponding Tannakian category, the category of isocrystals.
We briefly recall this notion.
Let \(L\) be the completion of the maximal unramified extension of \(F\).
Denote by \(\sigma\) the Frobenius automorphism of \(L\).
An isocrystal is a finite-dimensional vector space \(V\) over \(L\) endowed with a \(\sigma\)-linear bijection \(\Phi: V \to V\).
They form a tensor category \(\Isoc_F\) for the obvious notion of tensor product.
(Among other axioms, we indeed have \(\End_{\Isoc_F}(1) = L^\sigma = F\).)
We have an obvious fiber functor for \(\Isoc_F\) over \(L\), namely \((V,\Phi) \mapsto V\), and so \(\Isoc_F\) is Tannakian.
By the Dieudonné-Manin classification theorem the tensor category \(\Isoc_F\) has a nice structure: it is semi-simple and its simple objects are parametrized by \(\Q\).
We briefly recall this classification and refer the reader to \cite[Chapitre VI \S 3.3]{Saavedra} for more details and references.
Fix a uniformizer \(\varpi\) of \(F\).
For \(r/s \in \Q\) for coprime \(r,s \in \Z\) with \(s>0\) we may construct the corresponding simple object of \(\Isoc_F\) as follows.
Let \(S(r/s)\) be \(L^s\) and define a \(\sigma\)-linear automorphism of \(S(r/s)\) as \(\sigma\) (on coordinates) post-composed with the linear automorphism of \(L^s\) with matrix
\[
  \begin{pmatrix}
     0 & 1 & 0 & \dots & 0 \\
     0 & 0 & 1 \\
       &&& \ddots\\
     0 & 0 & \dots & 0 & 1 \\
    \varpi^r&0&\dots&0&0 \end{pmatrix}. \]
This defines a simple object \(S(r/s)\) in \(\Isoc_F\).
The isomorphism class of \(S(r/s)\) does not depend on the choice of uniformizer \(\varpi\), and any simple object is isomorphic to \(S(q)\) for a uniquely determined \(q \in \Q\).
Denote by \(F_s\) the unramified extension of degree \(s\) of \(F\) in \(L\).
The \(F\)-algebra \(\End_{\Isoc_F}(S(r/s))\) embeds in the matrix algebra \(M_s(L)\), in fact it embeds in \(M_s(F_s)\) and it is a central simple algebra over \(F\) which is a division ring and is split by \(F_s\).
Its invariant in \(H^2(F, \mathbb{G}_m) \simeq \Q/\Z\) is simply the image of \(r/s\).
Any isocrystal \((V,\Phi)\) decomposes canonically as \(\bigoplus_{r/s \in \Q} V_{r/s}\) where
\[ V_{r/s} = L \otimes_{F_s} V^{\varpi^{-r} \Phi^s} \]
is the isotypic component isomorphic to a finite sum of copies of \(S(r/s)\).
The rational numbers \(q\) for which \(V_{q} \neq 0\) are called the slopes of \((V,f)\) and the above decomposition is called the slope decomposition.
An isocrystal \((V,\Phi)\) is said to be pure of slope \(q \in \Q\) if \(V_{q'} = 0\) for all \(q' \neq q\).
The tensor product of two isocrystals which are pure of slopes \(q_1\) and \(q_2\) is also pure, of slope \(q_1+q_2\).
The tensor category \(\Isoc_F\) is the union of its tensor subcategories \(\Isoc_{F,s}\) consisting of all isocrystals \((V,f)\) whose slopes \(q\) all satisfy \(qs \in \Z\).
The Tannakian category \(\Isoc_{F,s}\) admits a fiber functor over \(F_s\), namely
\[ \omega_s: (V,\Phi) \longmapsto \bigoplus_{r \in \Z} V^{\varpi^{-r} \Phi^s}. \]
If \(s\) divides \(s'\) then we have an obvious identification between \(F_{s'} \otimes_{F_s} \omega_s\) and \(\omega_{s'}\).
We obtain a fiber functor \(\omega\) for \(\Isoc_F\) over the maximal unramified extension of \(F\).
Thanks to the description of \(\End_{\Isoc_F}(S(r/s))\) recalled above we can compute the band \(u_s\) of (the gerbe of fiber functors of) \(\Isoc_{F,s}\) as the (commutative!) multiplicative group \(\mathbb{G}_m\) over \(F\).
For an \(F_s\)-algebra \(A\) and \(x \in A^\times\), \(x\) acts on the slope \(r/s\) part \(A \otimes_{F_s} V^{\varpi^{-r} \Phi^s}\) by multiplication by \(x^r\).
For \(s\) dividing \(s'\) the natural morphism \(u_{s'} \to u_s\) can be checked to be \(x \mapsto x^{s'/s}\), and so the band \(u\) of (the gerbe of fiber functors of) \(\Isoc_F\) is the split protorus with character group \(\Q\).
The class of the gerbe in
\[ H^2(F, u) \simeq H^2_\cont(\Gamma, u(\ol{F})) \simeq \varprojlim_s H^2(F, u_s) \simeq \varprojlim_s \Q/\Z \simeq \widehat{\Z} \otimes_{\Z} \Q \]
(the second isomorphism because each \(H^1(F, u_s)\) vanishes and so \(\varprojlim^1_s H^1(F, u_s)\) also vanishes) can be computed from the above description of endomorphisms of simple isocrystals and is simply equal to \(1\).

For a connected linear algebraic group \(G\) over \(F\) we can identify the set of isomorphism classes of tensor functors \(\Rep(G) \to \Isoc_F\) with \(B(G) := G(L)/\sim\) where \(g_1 \sim g_2\) if and only if there exists \(x \in G(L)\) for which \(g_2 = x g_1 \sigma(x)^{-1}\) (\(\sigma\)-conjugacy).
This is because \(H^1(L,G_L)\) is trivial \cite{Lang_quasialg} \cite[Theorem 1.9]{Steinberg_reg} and so there is up to isomorphism only one fiber functor for \(\Rep(G)\) over \(L\), namely \(\omega_{G,L}: (V,\rho) \mapsto L \otimes_F V\).
It follows that any tensor functor \(\Rep(G) \to \Isoc_F\) is isomorphic to one of the form \((V,\rho) \mapsto (L \otimes_F V, \Phi_{V,\rho})\).
It is clear that setting \(\Phi_{V,\rho} = \sigma \otimes \id_V\) gives a tensor functor, and any other tensor functor differs from this one by an automorphism of the fiber functor \(\omega_{G,L}\), i.e.\ by an element of \(G(L)\).
A similar argument shows that two elements of \(G(L)\) induce isomorphic tensor functors if and only if they are \(\sigma\)-conjugated.
This point of view on ``isocrystals with additional structure'' is historically the first one \cite{Kottwitz_isoc1} and was motivated by the study of Shimura varieties and Rapoport-Zink spaces.

We now briefly discuss the set \(B(G) \simeq H^1_\alg(\Eiso, G)\) in the case where \(G\) is a connected reductive group over \(F\).
We refer the reader to \cite{Kottwitz_isoc2} for more details.
The basic subset \(B(G)_\bas \simeq H^1_\bas(\Eiso, G)\) is completely described by the map \(\kappa_G\) of Theorem \ref{thm:TN_iso}.
Kottwitz constructed \cite[Lemma 6.1]{Kottwitz_AnnArbor} maps
\[ \kappa_G: B(G) \to X^*(Z(\Ghat)^\Gamma) \]
which as the notation suggests extend the maps of Theorem \ref{thm:TN_iso}.
(In fact the definition in the general and basic case are not different: Kottwitz first defined isomorphisms \(\kappa_T\) for all tori \(T\) and then extended the map to arbitrary connected reductive groups by reducing to the case where the derived subgroup is simply connected using z-extensions.)
We also have obvious maps of pointed sets
\[ \nu_G: B(G) \to \left( \Hom(u_{\ol{F}}, G_{\ol{F}}) / G(\ol{F})-\mathrm{conj} \right)^\Gamma, \]
called the Newton map.
By definition \(B(G)_\bas\) is the preimage under \(\nu_G\) of the subset \(\Hom(u,Z(G))\) of the target.
The pair \((\nu_G,\kappa_G)\) is injective on \(B(G)\) \cite[\S 4.13]{Kottwitz_isoc2}.
For a more precise description of non-basic classes see \cite[Theorem 5.4]{Kottwitz_isoc2}.

\bibliographystyle{amsalpha}

\end{document}